\def\var{\varepsilon}
\def\beq{\begin{equation}}
	\def\eeq{\end{equation}}
\def\bsp{\begin{aligned}}
	\def\esp{\end{aligned}}
\DeclareMathOperator{\dive}{div}
\newtheorem{theorem}{Theorem}[section]
\newtheorem{lemma}{Lemma}[section]
\newtheorem{prop}{Proposition}[section]
\newtheorem{remark}{Remark}[section]
\newtheorem{defn}{Definition}[section]
\def\bma#1\ema{{\allowdisplaybreaks\begin{aligned}#1\end{aligned}}}
\numberwithin{equation}{section}
\begin{document}
	\title{{\LARGE \textbf{Global existence and large time behavior of weak solutions to the two-phase flow}}}
	
	
	\author[a,b]{Ya-Ting Wang \thanks{E-mail: wangyating1009@sina.cn(Y.-T Wang).}}
	
	\author[a,b]{Ling-Yun Shou \thanks{E-mail: shoulingyun11@gmail.com(L.-Y Shou).}}
	\affil[a]{School of Mathematical Sciences,
		Capital Normal University, Beijing 100048, P.R. China}
	\affil[b]{Academy for Multidisciplinary Studies, Capital Normal University, Beijing 100048, P.R. China}
	
	\date{}
	\renewcommand*{\Affilfont}{\small\it}
	\maketitle
	\begin{abstract}
		In this paper, we consider a two-phase flow model consisting of the compressible Navier-Stokes equations with degenerate viscosity coupled with the compressible Navier-Stokes equations with constant viscosities via a drag force, which can be derived from Chapman-Enskog expansion for the compressible Navier-Stokes-Vlasov-Fokker-Planck system. For general initial data, we establish the global existence of weak solutions with finite energy to the initial value problem in the three-dimensional periodic domain, and prove the convergence of global weak solutions to its equilibrium state as the time tends to infinity.
	\end{abstract}
	\noindent{\textbf{Key words:} Two-phase flow, Compressible Navier-Stokes equations, Weak solutions, Global existence, Large time behavior  }
	\section{Introduction}

	The two-phase flow models can simulate a variety of physical phenomena describing the mixture of two different flows with appropriate interactions, and play an important role in many applied scientific areas, such as nuclear, chemical-process, petroleum, cryogenic, bio-medical, oil-and-gas, microtechnology, and so on \cite{brennen1,desv1, gidaspow1,ishii2,zuber1}.  In the present paper, we consider the initial value problem (IVP) for the following two-phase flow model in the periodic domain $\mathbb{T}^3:=\mathbb{R}^{3}/\mathbb{Z}^3$:
	\begin{equation}\label{two}
		\left\{
		\begin{aligned}
			&n_{t}+\dive(n v)=0,\\
			&(nv)_{t}+\dive(n v\otimes v)+\nabla n=-\kappa n(v-u)+\eta\dive( n\mathbb{D}(v)),\\
			&\rho_{t}+\dive(\rho u)=0,\\
			&(\rho u)_{t}+\dive (\rho  u\otimes u)+\nabla P(\rho)=\kappa n(v-u)+\mu\Delta u+(\mu+\lambda)\nabla\dive u,\quad x\in \mathbb{T}^3,~~t>0,
		\end{aligned}
		\right.
	\end{equation}
	with the initial data
	\begin{equation}
		\begin{aligned}
			(n, n v,\rho,\rho u)(x,0)=(n_{0},m_{0},\rho_{0},\tilde{m}_{0})(x),\quad x\in \mathbb{T}^3,\label{d}
		\end{aligned}
	\end{equation}
	where $n=n(x,t)\geq0$ and $v=v(x,t)\in\mathbb{R}^{3}$ denote the density and velocity of compressible Navier-Stokes equations  $(\ref{two})_{1}$-$(\ref{two})_{2}$ with degenerate viscosity, and $\rho=\rho(x,t)\geq0$ and $u=u(x,t)\in \mathbb{R}^3$ stand for the density and velocity of compressible Navier-Stokes equations  $(\ref{two})_{3}$-$(\ref{two})_{4}$ with constant viscosities, $\kappa n(v-u)$ is the drag force term. $\mathbb{D}(v):=\frac{\nabla v+(\nabla v)^{tr}}{2}$ is the deformation tensor. The coefficients $\kappa$, $\eta$, $\mu$ and $\lambda$ are constants satisfying
	\begin{equation}\nonumber
		\begin{aligned}
			\kappa>0,\quad\quad \eta>0,\quad \quad \mu>0,\quad\quad 2\mu+\lambda>0.
		\end{aligned}
	\end{equation}
	The pressure $P(\rho)$ takes the form 
	\begin{equation}\nonumber
		\begin{aligned}
			&P(\rho)=A\rho^{\gamma},
		\end{aligned}
	\end{equation}
	with $\gamma>\frac{3}{2}$ the adiabatic exponent and $A>0$ a constant.

	The two-phase flow model (\ref{two}) can be derived from the compressible Navier-Stokes-Vlasov-Fokker-Planck equations with a local
	alignment force as follows
	\begin{equation}\label{NSVFP}
		\left\{
		\begin{aligned}
			&f_{t}+\xi\cdot \nabla_{\xi} f=\dive_{\xi}((\xi-u)f)+\frac{1}{\eta} \dive_{\xi} ((\xi-v)+\nabla_{\xi}f),\\
			&\rho_{t}+\dive_{x}(\rho u)=0,\\
			&(\rho u)_{t}+\dive_{x} (\rho  u\otimes u)+\nabla_{x} P(\rho)\\
			&\quad=\kappa n(v-u)+\mu\Delta_{x} u+(\mu+\lambda)\nabla_{x}\dive_{x} u,\quad (x,\xi)\in \mathbb{T}^3\times\mathbb{R}^3,\quad t>0,
		\end{aligned}
		\right.
	\end{equation}
	where $f$ is the distribution function associated with the particles, and $n$ and $nv$ are the macroscopical density and momentum defined by
	$$
	n(x,t):=\int_{\mathbb{R}^3} f(x,\xi,t)d\xi,\quad\quad nv(x,t):=\int_{\mathbb{R}^3}\xi f(x,\xi,t)d\xi.
	$$
	The fluid-particle system (\ref{NSVFP}) arises in modelling the sedimentation of suspensions, sprays, combustion \cite{jabin1,o1,will1,will2}, etc. Recently, Li-Wang-Wang \cite{lhl1} applied the Chapman-Enskog expansion for the fluid-particle model (\ref{NSVFP}) around the local Maxwellian 
	$$
	\frac{n(x,t)}{\sqrt{(2\pi)^3}}e^{-\frac{|\xi-v(x,t)|^2}{2}}
	$$
	to obtain the two-phase flow equations
	\begin{equation}\label{two1}
		\left\{
		\begin{aligned}
			&n_{t}+\dive_{x}(n v)=0,\\
			&(nv)_{t}+\dive_{x}(n v\otimes v)+\nabla_{x} n=-\kappa n(v-u)+\eta \dive_{x}( n\mathbb{D}_{x}(v))-\eta\int_{\mathbb{R}^3}\xi\otimes \xi \cdot \nabla_{x} \Pi d\xi,\\
			&\rho_{t}+\dive(\rho u)=0,\\
			&(\rho u)_{t}+\dive_{x} (\rho  u\otimes u)+\nabla_{x} P(\rho)=\kappa n(v-u)+\mu\Delta_{x} u+(\mu+\lambda)\nabla_{x}\dive_{x} u,
		\end{aligned}
		\right.
	\end{equation}
	where the term $\Pi$ is governed by the microscopic part. 
		
		In the limiting case $\eta\rightarrow 0$, the system (\ref{two1}) converges to the coupled Euler-Navier-Stokes two-phase flow model
	\begin{equation}\label{EulerNS}
		\left\{
		\begin{aligned}
			&n_{t}+\dive(n v)=0,\\
			&(nv)_{t}+\dive(n v\otimes v)+\nabla n=-\kappa n(v-u),\\
			&\rho_{t}+\dive(\rho u)=0,\\
			&(\rho u)_{t}+\dive (\rho  u\otimes u)+\nabla P(\rho)=\kappa n(v-u)+\mu\Delta u+(\mu+\lambda)\nabla\dive u,\quad x\in \mathbb{T}^3,~t>0.
		\end{aligned}
		\right.
	\end{equation}	
	
	When the microscopic effect in (\ref{two1}) is considered to be suitably small, we can derive the two-phase flow model (\ref{two}) as a suitable approximate system of both (\ref{NSVFP}) and (\ref{EulerNS}). To our best
	knowledge, there are few results available on mathematical analysis for the two-phase flow model (\ref{two}). In \cite{lhl2, lhl3}, the authors showed the existence and nonlinear stability of stationary solutions to the inflow/outflow problem in a half line.


	The coupled Euler-Navier-Stokes two-phase flow model $(\ref{EulerNS})$ has been derived rigorously in \cite{choi3} as a fluid-dynamical limit of the compressible Navier-Stokes-Vlasov-Fokker-Planck equations (\ref{NSVFP}). Global well-posedness and time-decay rates of strong solutions to (\ref{EulerNS}) for initial data close to a constant equilibrium state have been investigated both in Sobolev spaces \cite{choi1,jung1,wu1} and in critical Besov spaces \cite{lhl0}. In addition, there are many important works on other related two-phase flow models; refer to \cite{bresch4,bresch5,bresch2,wen0,novotny2} and references therein.

The two-phase flow system \eqref{two} for $\kappa=0$ can reduce to the compressible Navier-Stokes equations 
	\begin{equation}\label{NS}
		\left\{
		\begin{aligned}
			&\rho_{t}+\dive(\rho u)=0,\\
			&(\rho u)_{t}+\dive (\rho  u\otimes u)+\nabla P(\rho)=\dive (\mu(\rho)\mathbb{D}u+\lambda(\rho)\dive u \mathbb{I} ).
		\end{aligned}
		\right.
	\end{equation}	
	We would like to mention some important progress made recently on the existence of global weak solutions with large aptitude for the compressible Navier-Stokes equations \eqref{NS}. In the case that both $\mu(\rho)$ and $\lambda(\rho)$ are constants, Lions first established the global existence of weak solutions to the $d$-dimensional ($d\geq2)$ compressible Navier-Stokes equations (\ref{NS}) for general initial data, where the pressure satisfies the $\gamma$-law $P(\rho)=\rho^{\gamma}$ with $\gamma\geq \frac{3d}{d+2}$ $(d=2,3)$ and $\gamma>\frac{d}{2}$ $(d\geq 4)$. The range for the adiabatic constant has been relaxed to any $\gamma>\frac{d}{2}$ by Feireisl-Novotn$\rm{\acute{y}}$-Petzeltov$\rm{\acute{a}}$ \cite{feireisl1}, any $\gamma>1$  by Jiang-Zhang \cite{jiang1} if the initial data is spherically symmetric, and $\gamma=1$ by Plotnikov-Weigant \cite{plotnikov1} in the two-dimensional case. Indeed, Hu \cite{hu1} studied the concentration phenomenon of the convective term $\rho u\otimes u$ for $\gamma\in [1,\frac{d}{2}]$. In addition, Feireisl \cite{feireisl2} showed that such weak solutions asymptotically converge to its equilibrium state as the time grows up, and then Feireisl \cite{feireisl2} proved the global existence of weak solutions to \eqref{NS} for general pressure laws allowed to be non-monotone on a compact set. Bresch-Jabin \cite{bresch1} developed new compactness estimates of the density and obtained global weak solutions to \eqref{NS} for thermodynamically unstable pressure laws and anisotropic viscosity coefficients.

	When $\mu(\rho)$ and $\lambda(\rho)$ depend on the density which are degenerate at vacuum, such viscous compressible equations appear in the description of shallow water \cite{ge1,mar1} or geophysical flows \cite{pe1,lions2}, and can be derived from the fluid-dynamical approximation to the Boltzmann equation \cite{liu1}. Bresch et. al. \cite{bresch30,bresch10,bresch20} showed that for $\mu(\rho)$ and $\lambda(\rho)$ verifying $\lambda(\rho)=\rho\mu'(\rho)-\mu(\rho)$, the system (\ref{NS}) had a new entropy inequality providing higher regularity of the density, which was applied to prove the global existence of weak solutions for compressible Navier-Stokes equations with force terms \cite{bresch10,bresch20}. Mellet-Vasseur \cite{mellet1} investigated the weak stability of global solutions to \eqref{NS} by deriving a $L^{\infty}(0,T;L\log{L})$-type estimate of the velocity.  Li-Xin \cite{l1} and Vasseur-Yu \cite{vasseur10} used two different ways to construct the approximate sequence and established the global existence of weak solutions to (\ref{NS}) independently. Bresch-Vasseur-Yu \cite{bresch40} obtained global weak solutions for more general viscous stress tensors.


	In this paper, we aim to establish the existence and large time behavior of global weak solutions with finite energy to the IVP (\ref{two})-(\ref{d}) for general initial data.

	First, we give the definition of global weak solutions to the IVP $(\ref{two})$-$(\ref{d})$ below.
	\begin{defn}\label{defn11}
		$(n,nv,\rho,\rho u)$ with $n\geq 0$ and $\rho\geq0$ is said to be a global weak solution to the IVP $(\ref{two})$-$(\ref{d})$ if for any time $T>0$, the following properties hold$:$
		\begin{itemize}
			\item{} Integrability conditions.
			
			\begin{equation}\nonumber
				\left\{
				\begin{aligned}
					& n\in L^{\infty}(0,T;L^{1}(\mathbb{T}^3)),\quad\quad \quad ~\nabla \sqrt{n}\in L^{\infty}(0,T;L^2(\mathbb{T}^3)),\\
					&\rho \in L^{\infty}(0,T;L^{\gamma}(\mathbb{T}^{3} )),\quad\quad \quad~ \rho^{\frac{5}{3}\gamma-1}\in L^{1}(0,T;L^{1}(\mathbb{T}^3)),\\
					&\sqrt{n} v\in L^{\infty}(0,T;L^2(\mathbb{T}^3)),\quad \quad\sqrt{\rho} u\in L^{\infty}(0,T, L^2(\mathbb{T}^{3})),\\
					& u\in L^2(0,T;H^1(\mathbb{T}^{3}) ), \quad\quad \quad~\sqrt{n}v-\sqrt{n}u\in L^2(0,T;L^2(\mathbb{T}^3)).\\
				\end{aligned}
				\right.
			\end{equation}
			
			\item Equations.
			
			For any text function $\phi\in \mathcal{D}(\mathbb{T}^3\times [0,T))$, the equations $(\ref{two})_{1}$, $(\ref{two})_{2}$ and $(\ref{two})_{4}$ are satisfied in the following sense$:$
			\begin{align}
				&\int_{\mathbb{T}^3}n_{0}\phi(0)dx+\int_{0}^{T}\int_{\mathbb{T}^3} (n \phi_{t}+n v \cdot \nabla \phi)dxdt=0,\label{e1}\\
				&\int_{\mathbb{T}^3} m_{0}^{i}\phi(0)dx+\int_{0}^{T}\int_{\mathbb{T}^3} [  n v^{i} \phi_{t}+\sqrt{n} v^{i} \sqrt{n} v\cdot \nabla \phi-\partial_{i}n \phi]dx\nonumber\\
				&\quad=\int_{0}^{T}\int_{\mathbb{T}^3} [\kappa n v^i\phi-\kappa n u^i\phi-\eta n v^i \Delta \phi-\eta nv^i \nabla \partial_{i}\phi]dxdt,\quad\quad\quad\quad~\quad i=1,2,3,\label{e2}\\
				&\int_{\mathbb{T}^3} \tilde{m}_{0}^{i}\phi(0)dx+\int_{0}^{T}\int_{\mathbb{T}^3} [ \rho u^{i} \phi_{t}+\rho  u^i u\cdot \nabla \phi+A\rho^{\gamma}\partial_{i}\phi]dx\nonumber\\
				&\quad=\int_{0}^{T}\int_{\mathbb{T}^3} [\kappa nu^i\phi-\kappa nv^i\phi+\mu \nabla u^i \nabla \phi+(\mu+\lambda)\dive u \partial_{i}\phi]dxdt,\quad~~ \quad i=1,2,3,\label{e4}
			\end{align}
			and the equation $(\ref{two})_{3}$ holds in in the sense of renormalized solutions, i.e., for $b\in C^{1}(\mathbb{R})$ satisfies $b'(z)=0$ with $z\in\mathbb{R}$ large enough,
			\begin{align}
				&\int_{\mathbb{T}^3}b(\rho_{0})\phi(0)dx+\int_{0}^{T}\int_{\mathbb{T}^3} [b(\rho) \phi_{t}+b(\rho)u \cdot \nabla \phi-(\rho b'(\rho)-b(\rho))\dive u\phi]dxdt=0.\label{e3}
			\end{align}
			In addition, it holds for any text function $\varphi\in \mathcal{D}(\mathbb{T}^3)$ that
			\begin{equation}\label{initial}
					\left\{
				\begin{aligned}
					&\lim_{t\rightarrow0}\int_{\mathbb{T}^3} n\varphi dx=\int_{\mathbb{T}^3}n_{0}\varphi dx,\quad\quad \lim_{t\rightarrow0}\int_{\mathbb{T}^3} nv\varphi dx=\int_{\mathbb{T}^3}m_{0}\varphi dx,\\
					&\lim_{t\rightarrow0}\int_{\mathbb{T}^3} \rho \varphi dx=\int_{\mathbb{T}^3}\rho_{0}\varphi dx,\quad\quad \lim_{t\rightarrow0}\int_{\mathbb{T}^3} \rho u\varphi dx=\int_{\mathbb{T}^3}\tilde{m}_{0}\varphi dx.
				\end{aligned}
				\right.
			\end{equation}
			
			\item Energy inequality.
                                  
For a.e. $t\in(0,T)$, there holds
			\begin{equation}\label{energyin}
				\begin{aligned}
					&\int_{\mathbb{T}^3}\big(\frac{1}{2}n|v|^2+n\log{n}-n+1+\frac{1}{2}\rho |u|^2+\frac{A\rho^{\gamma}}{\gamma-1}\big)dx\\
					&\quad\quad+\int_{0}^{t}\int_{\mathbb{T}^3} \big( \kappa n|v-u|^2+\mu |\nabla u|^2+(\mu+\lambda)(\dive u)^2 \big)dxd\tau\\
					&\quad\leq \int_{\mathbb{T}^3}(\frac{1}{2}\frac{|m_{0}|^2}{n_{0}}+n_{0}\log{n_{0}}-n_{0}+1+\frac{1}{2}\frac{|\tilde{m}_{0}|^2}{\rho_{0}}+\frac{A\rho_{0}^{\gamma}}{\gamma-1})dx.
				\end{aligned}
			\end{equation}
		\end{itemize}
	\end{defn}

	\vspace{1ex}
	
	Then, we have the global existence of weak solutions to the IVP (\ref{two})-(\ref{d}) as follows.
	\begin{theorem}\label{theorem11}
		Assume that the initial data $(n_{0},m_{0},\rho_{0},\tilde{m}_{0})$ satisfies
		\begin{equation}\label{a1}
			\left\{
			\begin{aligned}
				&0\leq n_{0}\in L^1(\mathbb{T}^3),\quad n_{0}\log{n_{0}}\in L^1(\mathbb{T}^3),\quad \nabla\sqrt{ n_{0}}\in L^2(\mathbb{T}^3), \\
				&m_{0}=0~ \text{in}~\{x\in\mathbb{T}^3~|~ n_{0}(x)=0\},\quad \frac{|m_{0}|^2}{n_{0}}\in L^1(\mathbb{T}^3),  \quad \frac{m_{0}^{2+\eta_{0}}}{n_{0}^{1+\eta_{0}}}\in L^{1}(\mathbb{T}^3),\\
				&  0\leq \rho_{0}\in L^{\gamma}(\mathbb{T}^3)\cap L^{\gamma}(\mathbb{T}^3),\\
				&\tilde{m}_{0}=0~ \text{in}~\{x\in\mathbb{T}^3~| ~\rho_{0}(x)=0\},\quad  \frac{|\tilde{m_{0}}|^2}{\rho_{0}} \in L^1(\mathbb{T}^3),
			\end{aligned}
			\right.
		\end{equation}
		with $\eta_{0}>0$ any small constant. Then the IVP $(\ref{two})$-$(\ref{d})$ admits a global weak solution $(n,nv, \rho, \rho u)$ in the sense of Definition \ref{defn11}. Moreover, the solution $(n,nv, \rho, \rho u)$ satisfies for any time $T>0$ that
		\begin{equation}\label{mass}
		\left\{
		\begin{aligned}
			&\int_{\mathbb{T}^3} ndx=\int_{\mathbb{T}^3}n_{0}dx,\quad\quad~ \int_{\mathbb{T}^3} \rho dx=\int_{\mathbb{T}^3} \rho_{0}dx,\quad t\in(0,T),\\
			&\int_{\mathbb{T}^3} (n v+\rho u )dx=\int_{\mathbb{T}^3}(m_{0}+\tilde{m}_{0})dx,\quad \quad \quad \quad ~\quad t\in(0,T),
		\end{aligned}
		\right.
		\end{equation}
		and
		\begin{equation}\label{BD}
		\left\{
		\begin{aligned}
			&\underset{t\in[0, T]}{{\rm{ess~sup}}}~\int_{\mathbb{T}^3} |\nabla \sqrt{n}|^2dx +\int_{0}^{T}\int_{\mathbb{T}^3} |\nabla \sqrt{n}|^2dxdt \le C, \\
				&\underset{t\in[0, T]}{{\rm{ess~sup}}}~\int_{\mathbb{T}^3} n(1+|v|^2)\log{(1+|v|^2)}dx\leq C_{T}, 
		\end{aligned}
		\right.
		\end{equation}
		where $C>0$ is a constant independent of the time $T>0$, and $C_{T}>0$ is a constant dependent of the time $T>0$.

	\end{theorem}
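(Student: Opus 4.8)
\emph{Strategy.} The plan is to build the weak solution through a multi-layer approximation that reconciles the Lions--Feireisl theory for the constant-viscosity block $(\ref{two})_3$--$(\ref{two})_4$ with the Bresch--Desjardins/Mellet--Vasseur/Vasseur--Yu theory for the degenerate block $(\ref{two})_1$--$(\ref{two})_2$, the two being linked by the drag $\kappa n(v-u)$. At the bottom level I would regularize both continuity equations by artificial viscosities $\varepsilon\Delta n$ and $\varepsilon\Delta\rho$, add an artificial pressure $\delta\rho^{\beta}$ ($\beta$ large) to the $\rho$-momentum equation, and adjoin to the $n$-momentum equation the auxiliary damping terms $r_0 v+r_1 n|v|^2v$, a capillarity term of the form $\kappa_0 n\nabla(\Delta\sqrt n/\sqrt n)$ and a cold-pressure term, then solve the resulting system by a Faedo--Galerkin scheme in the two velocities. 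The solution is then obtained by sending, in order, the Galerkin dimension to $\infty$, then $\varepsilon\to0$ (where the BD entropy is first invoked to control $\nabla\sqrt n$), then $\kappa_0,r_0,r_1,\delta\to0$, each limit being justified by estimates uniform in the remaining parameters.

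\emph{A priori estimates.} Testing $(\ref{two})_2$ by $v$ and $(\ref{two})_4$ by $u$ and adding gives the identity underlying (\ref{energyin}), which yields $\sqrt n\,v,\sqrt\rho\,u\in L^\infty_tL^2_x$, $\rho^{\gamma}\in L^\infty_tL^1_x$, $n\log n\in L^\infty_tL^1_x$, $\nabla u,\sqrt n\,\mathbb{D}(v)\in L^2_{t,x}$ and, crucially, $\sqrt n(v-u)\in L^2_{t,x}$ with all bounds uniform in $T$. Since $\mu(n)=\eta n$ satisfies the BD relation $\lambda(n)=n\mu'(n)-\mu(n)=0$, testing the $n$-momentum equation against $v+\eta\nabla\log n$ produces the BD entropy estimate $\nabla\sqrt n\in L^\infty_tL^2_x\cap L^2_tL^2_x$; the only genuinely new term, the drag paired with $\eta\nabla\log n$, equals $-\kappa\eta\int(v-u)\cdot\nabla n=-2\kappa\eta\int\nabla\sqrt n\cdot\sqrt n(v-u)$ after integration by parts, hence is absorbed by the dissipation together with the uniform bound on $\sqrt n(v-u)$ — this is why the constant in the first line of (\ref{BD}) is independent of $T$. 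For the $\rho$-block, testing $(\ref{two})_4$ by a Bogovskii-type multiplier $\nabla(-\Delta)^{-1}(\rho-\langle\rho\rangle)$ upgrades the pressure integrability to $\rho^{\frac{5}{3}\gamma-1}\in L^1_{t,x}$, and $\gamma>\frac32$ is exactly what makes $\rho u\otimes u$ equi-integrable. Finally, testing the $n$-momentum equation by $(1+\log(1+|v|^2))v$ yields the Mellet--Vasseur estimate $n(1+|v|^2)\log(1+|v|^2)\in L^\infty_tL^1_x$; here the pressure term $\nabla n$, the capillarity, and the drag $\kappa n(v-u)$ generate contributions that force a Gronwall argument, producing the $T$-dependent constant in the second line of (\ref{BD}), and the initial assumptions $\nabla\sqrt{n_0}\in L^2$ and $m_0^{2+\eta_0}/n_0^{1+\eta_0}\in L^1$ in (\ref{a1}) are precisely what is needed to start it.

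\emph{Compactness and limit passage.} With these bounds the limit splits into two parts joined through the drag. For the $\rho$-block I would run the Lions--Feireisl machinery: Aubin--Lions compactness of $\rho u$, the effective viscous flux identity $\overline{P(\rho)\dive u}-\overline{P(\rho)}\,\overline{\dive u}=(2\mu+\lambda)\big(\overline{\rho\dive u}-\bar\rho\,\overline{\dive u}\big)$, propagation of the renormalized continuity equation, and the oscillation defect measure bound, which together give $\rho_k\to\rho$ a.e.\ and $\overline{P(\rho)}=P(\rho)$ (using $\gamma>\frac32$). For the $n$-block, $\nabla\sqrt n\in L^\infty_tL^2_x\cap L^2_tL^2_x$ together with the continuity equation yields strong convergence of $\sqrt n$, hence of $n$, in suitable $L^p_{t,x}$; the Mellet--Vasseur estimate then upgrades the weak convergence of $\sqrt n\,v$ to strong convergence in $L^2_{t,x}$, which is enough to pass to the limit in $nv\otimes v=(\sqrt n\,v)\otimes(\sqrt n\,v)$ and, writing $n\mathbb{D}(v)=\sqrt n\,(\sqrt n\,\mathbb{D}(v))$ with $\sqrt n$ strong and $\sqrt n\,\mathbb{D}(v)$ weak, in the degenerate viscous term. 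The drag term $\kappa n(v-u)=\kappa\sqrt n\,(\sqrt n(v-u))$ passes because $\sqrt n\to\sqrt n$ strongly and $\sqrt n(v-u)\rightharpoonup\sqrt n(v-u)$ weakly in $L^2_{t,x}$; similarly $\kappa n(v-u)$ in $(\ref{two})_4$. The energy inequality (\ref{energyin}) then follows by weak lower semicontinuity and the identities (\ref{mass}) by testing with constants.

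\emph{Main obstacle.} I expect the crux to be twofold. First, the approximation must be designed so that the BD entropy, the Mellet--Vasseur estimate and the Lions--Feireisl compactness are \emph{simultaneously} available: the auxiliary damping/capillarity/cold-pressure terms that make the degenerate block tractable must be shown not to spoil the energy and pressure estimates of the $\rho$-block, and one must verify that they vanish in the limit without disturbing the drag coupling. Second, the Mellet--Vasseur estimate itself in the presence of the drag $\kappa n(v-u)$ and the ``pressure'' $\nabla n$ is delicate, since one must control $\int\kappa n|v||u|(1+\log(1+|v|^2))$ and $\int\nabla n\cdot v\,(1+\log(1+|v|^2))$ with no pointwise bound on $n$ or $v$ — handling these, by Cauchy--Schwarz against the dissipation together with $(1+\log(1+|v|^2))^2\le\delta(1+|v|^2)\log(1+|v|^2)+C_\delta$ and a Gronwall closure, is what both makes the estimate work and introduces the time-dependence of $C_T$.
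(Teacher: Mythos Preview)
Your strategy is sound and hits the right notes: the energy/BD/Mellet--Vasseur trilogy for the degenerate block, Lions--Feireisl compactness (effective viscous flux, renormalization, oscillation defect) for the constant-viscosity block, the drag handled by the strong-$\sqrt n$/weak-$\sqrt n(v-u)$ pairing, and the correct identification of why the BD constant in (\ref{BD}) is $T$-independent while the Mellet--Vasseur constant is not. Your a-priori-estimate and compactness paragraphs match the paper's Lemmas~3.1--3.4 and Sections~4--5 essentially step for step.

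The one genuine difference is the approximation scheme. You propose the Vasseur--Yu route: Faedo--Galerkin in the velocities, capillarity $\kappa_0 n\nabla(\Delta\sqrt n/\sqrt n)$, damping $r_0 v+r_1 n|v|^2v$, and a cold pressure, followed by the chain of limits $\kappa_0,r_0,r_1\to0$. The paper instead follows Li--Xin: the approximate $n$-equation is regularized by $\varepsilon\sqrt n\,\Delta\sqrt n+\varepsilon\sqrt n\,\dive(|\nabla\sqrt n|^2\nabla\sqrt n)+\varepsilon n^{-12}$ and the $v$-equation by matching terms $\varepsilon n|v|^3v$, $\sqrt\varepsilon\,\dive(n\nabla v)$, $\varepsilon n^{-12}v$, together with $\varepsilon\Delta\rho$, $\varepsilon|u|^8u$ and the artificial pressure $\delta\rho^{\gamma_0}$ on the other block. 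The payoff of this choice is that the approximate system is genuinely parabolic in all four unknowns, so its global well-posedness is obtained not by Galerkin truncation but directly: a De~Giorgi iteration yields pointwise upper and lower bounds on both $n$ and $\rho$ (Lemma~3.5), after which standard $H^2$--$H^3$ parabolic estimates (Lemma~3.6) close. Only two limits, $\varepsilon\to0$ then $\delta\to0$, are needed, and the BD entropy is available already at the $\varepsilon$-level because the regularization was chosen to respect it. Your scheme should also succeed, but be aware that the naive term $\varepsilon\Delta n$ you list does \emph{not} preserve the BD structure at the $\varepsilon$-level---this is precisely why Li--Xin replace it by $\varepsilon\sqrt n\,\Delta\sqrt n$---so in practice you would drop it and rely on the capillarity alone, as Vasseur--Yu do.
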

	
	
	
	\begin{remark}
		By similar arguments, Theorem \ref{theorem11} can be extended to the two-dimensional case for any adiabatic constant $\gamma>1$.
	\end{remark}
	
	Next, we study the large time behavior of global weak solutions to the IVP (\ref{two})-(\ref{d}).
	
	\begin{theorem}\label{theorem12}
		Let the assumptions $(\ref{a1})$ be satisfied, and $(n,nv,\rho,\rho u)$ be the global weak solution to the IVP $(\ref{two})$-$(\ref{d})$ given by Theorem \ref{theorem11}. Then $(n,v,\rho,u)$ converges to its equilibrium state $(n_{c},u_{c},\rho_{c},u_{c})$ in the sense
		\begin{equation}\label{decay1}
			\begin{aligned}
				&\lim_{t\rightarrow \infty}\int_{\mathbb{T}^3} \big{(} n|v-u_{c}|^2+|n-n_{c}|^{p}+\rho|u-u_{c}|^2+|\rho-\rho_{c}|^{\gamma}\big{)}dx=0,\quad p\in [1,3),
			\end{aligned}
		\end{equation}
		where the constants $n_{c}$, $\rho_{c}$ and $u_{c}$ are denoted by
		\begin{equation}\label{equ}
			\begin{aligned}
				&n_{c}:=\int_{\mathbb{T}^3} n_{0}dx,\quad\quad \rho_{c}:=\int_{\mathbb{T}^3} \rho_{0}dx,\quad\quad u_{c}:=\frac{\int_{\mathbb{T}^3}(m_{0}+\tilde{m}_{0})dx}{\int_{\mathbb{T}^3} (n_{0}+\rho_{0})dx}.
			\end{aligned}
		\end{equation}
	\end{theorem}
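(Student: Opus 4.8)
The plan is to exploit the energy inequality \eqref{energyin} together with the conservation laws \eqref{mass} to extract decay. First I would observe that, since the right-hand side of \eqref{energyin} is a fixed finite constant and all terms on the left are nonnegative, the dissipation is globally integrable in time:
\begin{equation}\nonumber
\int_{0}^{\infty}\int_{\mathbb{T}^3}\big(\kappa n|v-u|^2+\mu|\nabla u|^2+(\mu+\lambda)(\dive u)^2\big)\,dx\,dt\le C<\infty.
\end{equation}
Hence there is a sequence $t_k\to\infty$ along which $\int_{\mathbb{T}^3}\big(n|v-u|^2+|\nabla u|^2\big)(\cdot,t_k)\,dx\to0$. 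Because the total energy $E(t):=\int_{\mathbb{T}^3}(\frac12 n|v|^2+n\log n-n+1+\frac12\rho|u|^2+\frac{A\rho^\gamma}{\gamma-1})\,dx$ is (essentially) nonincreasing in $t$, it converges to a limit $E_\infty$; and by the logarithmic Sobolev / BD estimate \eqref{BD} and the uniform mass bounds, the family $\{(n,\rho,\sqrt n v,\sqrt\rho u)(\cdot,t)\}_{t\ge0}$ is bounded in the relevant spaces, so one can pass to subsequential weak limits at times $t_k$.

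The core of the argument is to identify the limit. Using $\nabla u(\cdot,t_k)\to0$ in $L^2$ together with the Poincaré inequality on $\mathbb{T}^3$, $u(\cdot,t_k)$ converges to a spatial constant vector $\bar u_k$; the momentum identity \eqref{mass} combined with the vanishing of $n|v-u|^2$ then forces $\bar u_k\to u_c$ and likewise $\sqrt n v(\cdot,t_k)\to\sqrt{n_\infty}\,u_c$ where $n_\infty$ is the weak limit of $n(\cdot,t_k)$. To pin down $n_\infty\equiv n_c$ and $\rho_\infty\equiv\rho_c$ I would use the structure of the entropy/energy functional: define the relative functional
\begin{equation}\nonumber
\mathcal{H}(t):=\int_{\mathbb{T}^3}\Big(\tfrac12 n|v-u_c|^2+\big(n\log n-n\log n_c-(n-n_c)\big)+\tfrac12\rho|u-u_c|^2+\tfrac{A}{\gamma-1}\big(\rho^\gamma-\rho_c^\gamma-\gamma\rho_c^{\gamma-1}(\rho-\rho_c)\big)\Big)dx,
\end{equation}
which by \eqref{mass} differs from $E(t)$ only by an affine function of the (conserved) masses and momentum, hence is also nonincreasing and convergent. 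Each integrand is a nonnegative convex "distance" that vanishes only at $(n,v,\rho,u)=(n_c,u_c,\rho_c,u_c)$. Evaluating $\mathcal H$ along $t_k$ and using lower semicontinuity of convex integrals under weak convergence, the limit of $\mathcal H$ is controlled by $\mathcal H$ of the limit state; the vanishing dissipation and the constancy of the limiting velocity then give $\lim\mathcal H(t_k)=0$, and since $\mathcal H$ is monotone the full limit $\lim_{t\to\infty}\mathcal H(t)=0$ follows. Finally, coercivity of $\mathcal H$ — the Csiszár–Kullback–Pinsker inequality handling the $n\log n$ term to yield $\|n-n_c\|_{L^1}^2\lesssim\mathcal H$, the $\rho^\gamma$ term yielding $\|\rho-\rho_c\|_{L^\gamma}^\gamma\lesssim\mathcal H$, and the kinetic terms directly controlling $\int n|v-u_c|^2+\rho|u-u_c|^2$ — converts this into \eqref{decay1}. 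For the range $p\in[1,3)$ in the $|n-n_c|^p$ term one interpolates between the $L^1$ decay just obtained and the uniform $H^1$-type bound on $\sqrt n$ from \eqref{BD} (which gives $n$ bounded in $L^3$ via Sobolev embedding), so that $\|n-n_c\|_{L^p}\to0$ for every $p<3$.

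The main obstacle I anticipate is the rigorous passage to the limit in the weak-solution framework: the velocities $v,u$ are only controlled through $\sqrt n v$, $\sqrt\rho u$ and the degenerate viscous terms, so "$u(\cdot,t_k)\to u_c$" must be argued carefully — one cannot simply test the equations at a single time $t_k$. The remedy is to integrate the momentum equations over short time windows $[t_k,t_k+1]$, use $\int_{t_k}^{t_k+1}\!\!\int|\nabla u|^2\,dx\,dt\to0$ and $\int_{t_k}^{t_k+1}\!\!\int n|v-u|^2\,dx\,dt\to0$, and invoke the equicontinuity in time of $t\mapsto\int n\varphi\,dx$ and $t\mapsto\int\rho u\,\varphi\,dx$ provided by the weak formulation \eqref{e1}–\eqref{e4} and the uniform energy bounds, to upgrade the time-averaged convergence to convergence of the quantities appearing in \eqref{decay1}. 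A secondary subtlety is justifying that $E(t)$ (equivalently $\mathcal H(t)$) is genuinely nonincreasing for a.e. $t$ rather than merely bounded; this uses \eqref{energyin} applied on the interval $[s,t]$, which for weak solutions one obtains by the standard approximation-and-limit procedure already underlying Theorem \ref{theorem11}.
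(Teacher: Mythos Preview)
Your relative-entropy functional $\mathcal H$ is indeed monotone (since $\int(nv+\rho u)$, $\int n$, $\int\rho$ are conserved, $\mathcal H-E$ is a constant), and the overall architecture --- monotone energy, time-averaging over windows $[s,s+1]$, identifying the limit, then closing by a sandwich --- matches the paper. But the step ``the vanishing dissipation and the constancy of the limiting velocity then give $\lim\mathcal H(t_k)=0$'' has a real gap: the dissipation controls $\nabla u$ and $n|v-u|^2$, but gives you \emph{no} information about the spatial oscillation of $n$ or $\rho$. You could perfectly well have $u\equiv u_c$, $v\equiv u_c$ (so zero dissipation) while $n$ and $\rho$ oscillate wildly; then $\mathcal H$ stays bounded away from zero. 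Lower semicontinuity of convex integrals gives $\mathcal H(\text{weak limit})\le\liminf\mathcal H(t_k)$, which is the wrong inequality for your purpose --- you need to \emph{compute} $\lim\mathcal H(t_k)$, and for that you need to identify the limits of $n$ and $\rho$ and upgrade to strong convergence.

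The paper supplies the two missing ingredients. For $n$, the key is the \emph{time-integrated} part of the BD estimate \eqref{BD}, namely $\int_0^\infty\!\!\int_{\mathbb T^3}|\nabla\sqrt n|^2\,dx\,dt<\infty$ (uniformly in $T$), which forces $\|\nabla\sqrt{n_s}\|_{L^2(0,1;L^2)}\to0$ and hence $n_s\to n_c$ strongly in $L^2(0,1;L^q)$, $q<3$, by Poincar\'e and mass conservation; you only invoked the $L^\infty_t$ part of \eqref{BD} for interpolation at the very end. For $\rho$ there is no dissipation on $\nabla\rho$ at all, so the paper follows Feireisl--Petzeltov\'a: using the renormalized continuity equation and the higher integrability $\rho\in L^{\frac{5\gamma}{3}-1}_{t,x}$, a div--curl lemma applied to $(G(\rho_s^\gamma),0,0,0)$ and $(\rho_s^\gamma,0,0,0)$ yields $\overline{G(\rho^\gamma)\rho^\gamma}=\overline{G(\rho^\gamma)}\,\overline{\rho^\gamma}$, whence $\rho_s^\gamma\to\overline{\rho^\gamma}$ strongly and $\rho_s\to\rho_c$. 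Only after these identifications does the sandwich
\[
\widetilde E_\infty=\lim_{s\to\infty}\int_0^1\widetilde E(s+\tau)\,d\tau=\int_{\mathbb T^3}\Big(n_c\log n_c-n_c+1+\tfrac{A\rho_c^\gamma}{\gamma-1}\Big)dx\le\liminf_{t\to\infty}\int\big(n\log n-n+1+\tfrac{A\rho^\gamma}{\gamma-1}\big)dx\le\widetilde E_\infty
\]
close, delivering $\mathcal H(t)\to0$. Without the BD time integral and the div--curl step, your argument cannot proceed past the weak-limit stage.
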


	\begin{remark}
		Theorem \ref{theorem12} implies that the velocities of the compressible Navier-Stokes equations $\eqref{two}_{1}$-$\eqref{two}_{2}$ with degenerate viscosity and the compressible Navier-Stokes equations $\eqref{two}_{3}$-$\eqref{two}_{4}$ with constant viscosities are aligned when the time tends to infinity.
	\end{remark}


	\vspace{2ex}

	The rest part of this paper is arranged as follows. In Section \ref{section2}, we state the global well-posedness of strong solutions to the approximate system. Section \ref{section3} is devoted to the a-priori estimates of approximate solutions. In Section \ref{section4}, we vanish the artificial viscosities and show the convergence of approximate sequence to a global weak solution for (\ref{two}) with an artificial pressure term. In Section \ref{section5}, we vanish the artificial pressure and prove Theorem \ref{theorem11} concerning global existence of weak solutions to the original IVP (\ref{two})-(\ref{d}). Theorem \ref{theorem12} on the large time behavior of global weak solutions will be shown in Section \ref{section6}.

	\section{Approximate sequence}\label{section2}
	
Inspired by Li-Xin \cite{l1}, for $\var$, $\delta\in(0,1)$ and $\gamma_{0}>\max\{\gamma+4\}$, we are ready to solve the following approximate problem:
	\begin{equation}\label{twoapp}
		\left\{
		\begin{aligned}
			&n_{t}+\dive (nv)=\var \sqrt{n}\Delta \sqrt{n}+\var\sqrt{n}\dive (|\nabla \sqrt{n}|^2\nabla \sqrt{n}) +\var n^{-12},\\
			&n(v_{t}+v\cdot\nabla v)+\nabla n+\varepsilon n|v|^{3}v+\var n^{-12}v\\
			&\quad\quad~= -\kappa n(v-u)+\eta \dive (n\mathbb{D} (v))+\sqrt{\varepsilon} \dive (n\nabla v)+\varepsilon\sqrt{n} |\nabla \sqrt{n}|^2\nabla \sqrt{n}\cdot \nabla v,\\
			&\rho_{t}+\dive(\rho u)=\varepsilon \Delta \rho,\\
			&\rho(u_{t} +  u\cdot\nabla u)+\nabla(A\rho^{\gamma}+\delta \rho^{\gamma_{0}})+\var   |u|^{8}u\\
			&\quad\quad~=\kappa n(v-u)+\mu\Delta u+(\mu+\lambda)\nabla\dive u+\varepsilon\nabla u\cdot\nabla \rho,\quad x\in\mathbb{T}^{3},\quad t>0,\\
		&(n,v,\rho, u)(x,0)=(n_{0,\delta},v_{0,\delta},\rho_{0,\delta},u_{0,\delta})(x),\quad x\in\mathbb{T}^{3}.
		\end{aligned}
		\right.
	\end{equation}
 The regularized initial data $(n_{0,\delta},v_{0,\delta},\rho_{0,\delta},u_{0,\delta})$ is constructed by
 	\begin{equation}\label{dapp}
	\left\{
		\begin{aligned}
			&n_{0,\delta}:= (\sqrt{n_{0}}\ast j_{\delta})^2+\delta^{\frac{1}{100}},~~ v_{0,\delta}:=\frac{ ( n_{0}^{-\frac{1+\eta_{0}}{2+\eta_{0}}} m_{0})\ast j_{\delta}}{n_{0,\delta}^{\frac{1}{2+\eta_{0}}}},\\
			&\rho_{0,\delta}:= \rho_{0}\ast j_{\delta}+\delta\geq \delta,\quad u_{0,\delta}:=  \frac{\frac{\tilde{m}_{0}}{\sqrt{\rho_{0}}}\ast j_{\delta}}{\sqrt{\rho_{0,\delta}}},
		\end{aligned}
		\right.
	\end{equation}
 where $j_{\delta}$ is the smooth function satisfying
	\begin{equation}\label{jdelta}
		\left\{
		\begin{aligned}
			& \|j_{\delta}\|_{L^1 }=1,\quad 0\leq j_{\delta} \leq \delta ^{-\frac{1}{2\gamma_{0}}},\quad |\nabla j_{\delta}|\leq C\delta^{-\frac{1}{8}} j_{\delta},\\
			&j_{\delta}\ast f\rightarrow f \quad~~\text{in}~ L^{p}(\mathbb{T}^{3}),\quad\text{as}\quad \delta\rightarrow 0,\quad\forall f\in L^{p}(\mathbb{T}^{3}) ,\quad p\in [1,\infty),
		\end{aligned}
		\right.
	\end{equation}
	with some constant $C>0$ independent of $\delta$. 
	
	It is easy to verify as $\delta\rightarrow 0$ that
	\begin{equation}\label{dapp21}
		\left\{
		\begin{aligned}
		       &\rho_{0,\delta}\rightarrow \rho_{0},\quad\quad\quad\quad\quad\quad\quad\quad\quad\quad\quad\quad\quad   \quad\text{in}\quad L^{\gamma}(\mathbb{T}^3),\\
			&\rho_{0,\delta} |u_{0,\delta}|^2\rightarrow \frac{|m_{0}|^2}{\rho_{0}}\quad\quad\quad\quad\quad\quad\quad \quad\quad \quad~ \text{in}\quad L^1(\mathbb{T}^3),\\
			&n_{0,\delta}\rightarrow n_{0}\quad\quad \quad\quad\quad\quad\quad\quad\quad\quad\quad\quad\quad ~\quad\text{in}\quad L^1(\mathbb{T}^3),\\
			&\nabla \sqrt{n_{0,\delta}}\rightarrow \nabla \sqrt{n_{0}}\quad \quad\quad\quad\quad\quad\quad\quad\quad\quad\quad\text{in}\quad L^2(\mathbb{T}^3),\\
			&n_{0,\delta} |v_{0,\delta}|^{2+\eta_{0}}\rightarrow \frac{|m_{0}|^{2+\eta_{0}}}{n_{0}^{1+\eta_{0}}}\quad \quad\quad\quad\quad\quad\quad~\text{in}\quad L^1(\mathbb{T}^3),\\
			&n_{0,\delta} |v_{0,\delta}|^2=n_{0,\delta}^{\frac{\eta_{0}}{2+\eta_{0}}}|( n_{0}^{-\frac{1+\eta_{0}}{2+\eta_{0}}} m_{0})\ast j_{\delta}|^2\\
			&\quad\quad\quad\quad~\rightarrow n_{0}^{\frac{\eta_{0}}{2+\eta_{0}}}|n_{0}^{-\frac{1+\eta_{0}}{2+\eta_{0}}} m_{0}|^2=\frac{|m_{0}|^2}{n_{0}}\quad \text{in}\quad L^1(\mathbb{T}^3).
		\end{aligned}
		\right.
	\end{equation}
	Denote 
	\begin{equation}
			\begin{aligned}
				&E_{0,\delta}:=\int_{\mathbb{T}^3}\big(\frac{1}{2}n_{0,\delta}|v_{0,\delta}|^2+n_{0,\delta}\log{n_{0,\delta}}-n_{0,\delta}+1+\frac{1}{2}\rho_{0,\delta} |u_{0,\delta}|^2+\frac{A\rho_{0,\delta}^{\gamma}}{\gamma-1}\\
				&\quad\quad\quad+\frac{\delta \rho_{0,\delta}^{\gamma_{0}}}{p_{0}-1}+\var_{0,\delta} n^{-12}\big)dx.\label{E0delta}
			\end{aligned}
		\end{equation}
	By (\ref{jdelta}) and (\ref{dapp21}), it follows that
	\begin{equation}\label{dapp22}
		\begin{aligned}
		        &E_{0,\delta}\leq C,\quad\quad\lim_{\delta\rightarrow0} E_{0,\delta}=0,\quad\quad \int_{\mathbb{T}^3} n_{0,\delta} (1+|v_{0,\delta}|^2)\log{(1+|v_{0,\delta}|^2)}dx\leq C,
		\end{aligned}
	\end{equation}
	with $C>0$ a constant independent of $\delta$.

	We have the global well-posedness of strong solutions to the approximate problem (\ref{twoapp})-(\ref{dapp22}).
	\begin{prop}\label{appwell}
		Let $\delta\in(0,1)$, and the assumptions of Theorem \ref{theorem11} hold. There for suitably small $\var\in(0,\frac{1}{4})$, the IVP $(\ref{twoapp})$ has a unique strong solution $(n_{\varepsilon},v_{\varepsilon},\rho_{\varepsilon},u_{\varepsilon})$ satisfying for any $T>0$ that
		\begin{equation}\nonumber
			\left\{
			\begin{aligned}
				&\inf_{(x,t)\in \mathbb{T}^3\times[0,T]} n_{\varepsilon}(x,t)>0,\quad \quad \inf_{(x,t)\in \mathbb{T}^3\times[0,T]} \rho_{\varepsilon}(x,t)>0,\\
				&n_{\varepsilon},v_{\varepsilon},\rho_{\varepsilon},u_{\varepsilon}\in C([0,T]; H^2(\mathbb{T}^3))\cap L^2(0,T;H^3(\mathbb{T}^3)),\\
				& (n_{\varepsilon})_{t},(v_{\varepsilon})_{t},(\rho_{\varepsilon})_{t},(u_{\varepsilon})_{t} \in L^2(0,T;H^1(\mathbb{T}^3)).
			\end{aligned}
			\right.
		\end{equation}
	\end{prop}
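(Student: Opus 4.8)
The plan is to establish Proposition \ref{appwell} by a standard fixed-point/continuation argument, treating the four equations of $(\ref{twoapp})$ as a coupled parabolic system once the artificial viscosities are present. The key observation is that every equation in $(\ref{twoapp})$ has been regularized so as to become (uniformly) parabolic: the $n$-equation gains the term $\var\sqrt{n}\Delta\sqrt{n}$ (which, after writing $\sqrt{n}$ as the unknown, is a genuine heat operator), the $v$-equation gains $\sqrt{\var}\,\dive(n\nabla v)$ on top of the degenerate $\eta\dive(n\mathbb{D}(v))$, the $\rho$-equation gains $\var\Delta\rho$, and the $u$-equation already has the full Lam\'e operator $\mu\Delta u+(\mu+\lambda)\nabla\dive u$. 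First I would set up the iteration: given $(\bar n,\bar v,\bar\rho,\bar u)$ in a suitable ball of $C([0,T];H^2)\cap L^2(0,T;H^3)$ with $\bar n,\bar\rho$ bounded below by positive constants, freeze the coefficients and the transport/drag/penalization terms, solve the resulting linear parabolic problems for $(n,v,\rho,u)$ by standard $L^2$-theory (Galerkin plus energy estimates in $H^2$, using that on $\mathbb{T}^3$ there are no boundary terms), and show the solution map is a contraction on a short time interval. Local existence and uniqueness in the stated class then follow.

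The second step is to propagate the positive lower bounds of $n$ and $\rho$ so that the construction does not degenerate. For $\rho$ this is immediate from the maximum principle applied to $\rho_t+\dive(\rho u)=\var\Delta\rho$ once $u$ is smooth, giving $\rho\geq \delta e^{-Ct}>0$. For $n$ the crucial point is the added term $\var n^{-12}$ on the right-hand side of $(\ref{twoapp})_{1}$: writing the equation for $m:=n^{-12}$ (or testing against the appropriate negative power) shows that $n$ cannot touch zero in finite time; more precisely one derives a differential inequality forcing $\inf_x n(\cdot,t)$ to stay above a positive, $T$-dependent constant. The same term $\var n^{-12}v$ in the momentum equation is designed to be compatible with this and with the energy identity. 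I would also record the basic energy estimate obtained by testing $(\ref{twoapp})_{2}$ by $v$ and $(\ref{twoapp})_{4}$ by $u$, which reproduces $(\ref{energyin})$ at the approximate level with the extra dissipative contributions from the $\var$, $\delta$ terms; this gives the uniform-in-time $L^1$, $L^\gamma$, $L^2$ bounds needed to rule out blow-up of the low-order norms.

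The third step is the global continuation: having local-in-time solutions in $H^2$ with positive lower bounds, I would derive higher-order a priori estimates (in $H^2$, then $H^3$ in an $L^2_t$ sense) that depend only on $T$, $\var$, $\delta$, and the data, and close them via Gronwall. The structure here is the usual one for viscous compressible flow: differentiate the equations, test with $\Delta^2$ of the unknowns (or use the mild formulation and parabolic smoothing), absorb the worst nonlinear terms $\dive(nv\otimes v)$, $\dive(\rho u\otimes u)$, the high-order polynomial damping $\var n|v|^3v$, $\var|u|^8u$, and the quasilinear terms $\var\sqrt n|\nabla\sqrt n|^2\nabla\sqrt n\cdot\nabla v$ and $\var\nabla u\cdot\nabla\rho$ using Sobolev embedding $H^2(\mathbb{T}^3)\hookrightarrow L^\infty$, the lower bounds on $n,\rho$, and interpolation. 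Once a bound on $\|(n,v,\rho,u)(t)\|_{H^2}$ that does not blow up in finite time is in hand, the local solution extends to any $T>0$, and the time-derivative regularity $(n_t,v_t,\rho_t,u_t)\in L^2(0,T;H^1)$ is read off directly from the equations.

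The main obstacle I expect is the quasilinear degenerate term $\eta\dive(n\mathbb{D}(v))$ combined with the companion term $\var\sqrt n|\nabla\sqrt n|^2\nabla\sqrt n\cdot\nabla v$ in the $v$-equation: because the principal part $\dive(n\mathbb{D}(v))$ vanishes with $n$, uniform parabolicity of the $v$-equation is only restored by the auxiliary term $\sqrt\var\,\dive(n\nabla v)$, and controlling the resulting variable, low-regularity coefficient $n$ in the high-order estimates — while simultaneously maintaining the lower bound on $n$ and handling the cubic-in-$\nabla\sqrt n$ nonlinearity — is the delicate part. This is exactly why the specific regularizing terms (and the high power $n^{-12}$) were chosen in $(\ref{twoapp})$: they are tailored so that the energy and higher-order estimates close. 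The remaining terms (the constant-viscosity block, the drag $\kappa n(v-u)$, the artificial pressure $\delta\rho^{\gamma_0}$ with $\gamma_0>\gamma+4$) are handled by routine estimates once this core difficulty is resolved. I would cite \cite{l1} for the analogous construction in the single-equation case and indicate that the coupling through the bounded drag term $\kappa n(v-u)$ introduces only lower-order perturbations.
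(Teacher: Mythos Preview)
Your overall strategy---local well-posedness by linearization/fixed point, then global continuation via a priori estimates---matches the paper exactly; the paper even omits the local step, citing \cite{mat1}, and defers everything else to the a priori estimates of Section~\ref{section3}. Where you diverge is in the implementation of those estimates. For the lower bound on $n$ you propose a maximum-principle/differential-inequality argument for $n^{-12}$; the paper instead runs a De~Giorgi iteration on $\vartheta=n^{-1/2}$ (Lemma~\ref{lemma35}), which is better adapted to the nonstandard diffusion $\var\sqrt{n}\Delta\sqrt{n}+\var\sqrt{n}\dive(|\nabla\sqrt{n}|^2\nabla\sqrt{n})$ and uses only the $\var$-weighted integral bounds on $n|v|^5$ and $n^{-25}$ already produced by the basic energy (Lemma~\ref{lemma31}). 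The same De~Giorgi scheme gives the upper and lower bounds on $\rho$. For the $H^2$--$H^3$ estimates you propose a direct high-order energy method; the paper (Lemma~\ref{lemma36}) instead rewrites the $\sqrt{n}$-equation as a parabolic $p$-Laplacian system and invokes the Calder\'on--Zygmund theory of Acerbi--Mingione \cite{ac1} to bootstrap $\|\nabla\sqrt{n}\|_{L^{3p}}$ against $\|v\|_{L^p}$, then feeds this into $L^p$ parabolic estimates for the $v$-equation. Your approach is more elementary and would likely close (all coefficients are nondegenerate once $n,\rho$ are bounded away from zero), but the paper's route separates the hierarchy more cleanly: energy $\to$ BD entropy $\to$ density bounds $\to$ higher order, with each step using only what came before, which is what makes the constants trackable for the subsequent $\var\to 0$ limit.
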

	
	\vspace{2ex}

	The local well-posedness of the strong solution to the IVP $(\ref{twoapp})$ for $\var$, $\delta\in(0,1)$ can be proved in a standard way based on linearization techniques and fixed point arguments. We omit the proof here for brevity, and the reader can refer to for example \cite{mat1}. By virtue of the a-priori estimates established in Section \ref{section3} below, we are able to to extend the local approximate sequence $(n_{\varepsilon},v_{\varepsilon},\rho_{\varepsilon},u_{\varepsilon})$ to a global one and prove Proposition \ref{appwell}.

	\begin{remark}
	The approximate framework of  $(\ref{twoapp})_{1}$-$(\ref{twoapp})_{2}$ can be applied to the construction of approximate solutions for the three-dimensional isentropic compressible Navier-Stokes equations with constant viscosities.
	\end{remark}
	
	\section{The a-priori estimates}\label{section3}
	
	In this section we derive the uniformly a-priori estimates of approximate sequence given by Proposition \ref{appwell}. These estimates shall capture the basic energy, the Bresch-Desjardins type entropy, the Mellet-Vasseur type estimate, and the higher integrability of the density $\rho$, which are applied to obtain the upper and lower bounds of the densities by virtue of De Giorgi iteration, and then the higher-order estimates can be shown by standard regularity estimates for nonlinear parabolic equations.

	First of all, we have the basic energy estimates.
	\begin{lemma}\label{lemma31}
		For any $\var\in(0,\frac{1}{4})$, $\delta\in(0,1)$ and given time $T>0$, let $(n,v,\rho,u)$ be any strong solution to the IVP $(\ref{twoapp})$ for $t\in (0,T]$. Then, under the assumptions of Theorem \ref{theorem11}, we have  $n,\rho\geq0$ and 
			\begin{equation}\label{energyvar}
			\begin{aligned}
				&\sup_{t\in [0,T]}\int_{\mathbb{T}^3}\big{(}n|v|^2+n+\rho |u|^2+\rho^{\gamma}+\delta \rho^{\gamma_{0}} +\var n^{-12}\big{)}dx\\
				&\quad+\int_{0}^{T}\int_{\mathbb{T}^3} \big(\eta n|\mathbb{D}(v)|^2+\kappa n|v-u|^2+\mu |\nabla u|^2+(\mu+\lambda)(\dive u)^2 \big)dxdt\\
				&\quad+\int_{0}^{T}\int_{\mathbb{T}^3} \big(\sqrt{\var}n|\nabla v|^2+\var n|v|^{5}+\var(1+|v|^2)(|\nabla \sqrt{n}|^2+|\nabla \sqrt{n}|^4)+\var n^{-12}|v|^2+\var^2 n^{-25}\big)dxdt\\
				&\quad+\int_{0}^{T}\int_{\mathbb{T}^3} \var(|u|^{10}+|\nabla \rho|^2+ \rho^{\gamma-2}+\delta\rho^{\gamma_{0}-2}) dxdt\leq C_{T},
			\end{aligned}
		\end{equation}
		where $C_{T}>0$ is a constant independent of $\varepsilon$ and $\delta$.
	\end{lemma}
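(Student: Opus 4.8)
The energy estimate \eqref{energyvar} is an a priori estimate for smooth solutions, so the plan is to multiply each equation by the natural "multiplier," integrate over $\mathbb{T}^3$, and sum. First I would test $(\ref{twoapp})_{2}$ with $v$ and $(\ref{twoapp})_{4}$ with $u$. Because $(\ref{twoapp})_{1}$ and $(\ref{twoapp})_{3}$ are the (regularized) continuity equations, the convective terms $n(v_t+v\cdot\nabla v)\cdot v$ and $\rho(u_t+u\cdot\nabla u)\cdot u$ combine with the density equations to produce the time derivative $\frac{d}{dt}\int \frac12 n|v|^2$ and $\frac{d}{dt}\int \frac12\rho|u|^2$ together with the "good" production terms from the artificial source terms in the mass equations; I would keep careful track of the $\varepsilon n^{-12}v$ term in $(\ref{twoapp})_{2}$ and the $\varepsilon n^{-12}$ source in $(\ref{twoapp})_{1}$, which together yield $\frac{d}{dt}\int\frac12\varepsilon n^{-12}$-type contributions after integrating by parts (using that $\frac{d}{dt}(n^{-12})$ is driven by the first equation), and similarly the $\varepsilon\sqrt n\,|\nabla\sqrt n|^2\nabla\sqrt n\cdot\nabla v$ term in $(\ref{twoapp})_{2}$ must be matched against the term $\varepsilon\sqrt n\,\dive(|\nabla\sqrt n|^2\nabla\sqrt n)$ in $(\ref{twoapp})_{1}$.

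The pressure terms are handled by the standard renormalization trick: $\nabla(A\rho^\gamma+\delta\rho^{\gamma_0})\cdot u$ integrated by parts against $(\ref{twoapp})_{3}$ gives $\frac{d}{dt}\int\big(\frac{A\rho^\gamma}{\gamma-1}+\frac{\delta\rho^{\gamma_0}}{\gamma_0-1}\big)$ plus the dissipative remainder $\varepsilon\int\big(A\gamma\rho^{\gamma-2}+\delta\gamma_0\rho^{\gamma_0-2}\big)|\nabla\rho|^2$ coming from the $\varepsilon\Delta\rho$ viscosity (after noting $P'(\rho)/\rho$ has an antiderivative); this, together with the fact that on $\mathbb{T}^3$ the $\varepsilon\Delta\rho$ and $\varepsilon\nabla u\cdot\nabla\rho$ terms interact cleanly, accounts for the $\varepsilon\rho^{\gamma-2}$, $\varepsilon\delta\rho^{\gamma_0-2}$ and $\varepsilon|\nabla\rho|^2$ terms on the left. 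For $n$ itself I would likewise test $(\ref{twoapp})_{1}$ with $\log n$ (equivalently use the BD-type structure) to get $\frac{d}{dt}\int(n\log n-n+1)$ together with $\varepsilon\int|\nabla\sqrt n|^2$ and $\varepsilon\int|\nabla\sqrt n|^4$ from the degenerate-diffusion and quartic-diffusion terms, plus the $\nabla n$ pressure term in $(\ref{twoapp})_{2}$ cancels against $\int\dive(nv)\log n$. The dissipation $\eta\int n|\mathbb D(v)|^2$ arises from $\eta\dive(n\mathbb D(v))\cdot v$, the term $\sqrt\varepsilon\int n|\nabla v|^2$ from $\sqrt\varepsilon\dive(n\nabla v)\cdot v$, the drag term $\kappa\int n|v-u|^2$ from summing the two drag contributions $-\kappa n(v-u)\cdot v+\kappa n(v-u)\cdot u$, the terms $\varepsilon\int n|v|^5$ and $\varepsilon\int|u|^{10}$ from $\varepsilon n|v|^3v\cdot v$ and $\varepsilon|u|^8u\cdot u$, and $\varepsilon\int n^{-12}|v|^2$ directly from the $\varepsilon n^{-12}v\cdot v$ term; the $\varepsilon(1+|v|^2)(|\nabla\sqrt n|^2+|\nabla\sqrt n|^4)$ and $\varepsilon^2 n^{-25}$ terms are extracted by combining the Cauchy–Schwarz splitting of the cross terms with the $\varepsilon n^{-12}$ source, exactly as in Li–Xin \cite{l1}.

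After assembling all the identities I would integrate in time on $[0,T]$ and bound the right-hand side by the initial energy $E_{0,\delta}$ plus lower-order terms. The nonnegativity $n,\rho\ge 0$ is a separate (and standard) consequence of the maximum principle for the parabolic equations $(\ref{twoapp})_{1}$ and $(\ref{twoapp})_{3}$, using that the source $\varepsilon n^{-12}$ keeps $n$ strictly positive as long as the solution exists; I would state this first. The main obstacle is the bookkeeping: the cross terms between the artificial higher-order diffusions in the $n$-equation and the corresponding terms in the $v$-equation do not cancel exactly but only after an integration by parts combined with a Cauchy–Schwarz splitting that must be arranged so that the leftover has the right sign and can be absorbed — this is precisely the point where the specific form of the regularization in \eqref{twoapp} (mimicking \cite{l1}) is essential, and where one must check that the constant $C_T$ can be taken independent of both $\varepsilon$ and $\delta$ (using $E_{0,\delta}\le C$ from \eqref{dapp22} and a Grönwall argument for any time-growing contributions). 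Once the cancellation structure is verified, the remaining terms are controlled by Young's inequality and Grönwall's lemma, giving \eqref{energyvar}.
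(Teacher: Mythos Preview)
Your overall plan---test $(\ref{twoapp})_2$ with $v$, $(\ref{twoapp})_4$ with $u$, combine with the regularized continuity equations, and close via Gr\"onwall---matches the paper, as do the maximum-principle argument for $n,\rho\ge 0$ and the separate estimate on $n^{-12}$ that produces the $\varepsilon^2 n^{-25}$ dissipation.

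There is, however, a genuine gap in how you handle the pressure term $\int_{\mathbb T^3}\nabla n\cdot v\,dx=-\int_{\mathbb T^3} n\,\dive v\,dx$. You propose to cancel it by testing $(\ref{twoapp})_1$ with $\log n$ (equivalently $1+\log n$), claiming this yields $\frac{d}{dt}\int(n\log n-n+1)\,dx$ together with good terms $\varepsilon\int|\nabla\sqrt n|^2$ and $\varepsilon\int|\nabla\sqrt n|^4$ from the artificial diffusions. This step fails: multiplying $\varepsilon\sqrt n\,\Delta\sqrt n$ by $1+\log n$ and integrating by parts gives $-\varepsilon\int(3+\log n)|\nabla\sqrt n|^2\,dx$, which is \emph{not} sign-definite since $3+\log n$ is arbitrarily negative where $n$ is small; the quartic diffusion behaves the same way. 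The paper explicitly flags this obstruction and resolves it by abandoning $n\log n$ at the approximate level. Instead one splits $\int n\,\dive v\,dx$: on the set where $n$ is bounded one absorbs directly into $\eta\int n|\mathbb D(v)|^2$ via Young's inequality and $|\dive v|^2\le 3|\mathbb D(v)|^2$; the remaining piece $\int n\chi(n)\,\dive v\,dx$ (with a smooth cutoff $\chi$ vanishing on $[0,1]$, equal to $1$ on $[e,\infty)$, and satisfying $3\chi+2s\chi'\ge 0$) is generated by testing $(\ref{twoapp})_1$ with $\Pi'(n)$ for $\Pi(n)=n\int_0^n s^{-1}\chi(s)\,ds$. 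The key point is that $\Pi'(n)+2n\Pi''(n)\ge 0$, so the artificial-diffusion contributions now have the right sign, while $|\Pi'(n)|\le C(1+n)$ keeps the $\varepsilon n^{-12}$ source under control. Note also that the bare $\varepsilon\int|\nabla\sqrt n|^2$ and $\varepsilon\int|\nabla\sqrt n|^4$ terms in \eqref{energyvar} do not come from a $\log n$ multiplier at all: they arise simply by integrating $(\ref{twoapp})_1$ over $\mathbb T^3$. Once you insert this correction, the rest of your outline goes through.
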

	
	\begin{proof}
	First, according to the maximum principle for the parabolic equations $(\ref{twoapp})_{1}$ and $(\ref{twoapp})_{3}$, both $n$ and $\rho$ are nonnegative. Then one obtains after integrating $(\ref{twoapp})_{1}$ and $(\ref{twoapp})_{3}$ over $\mathbb{T}^3$ that
					\begin{equation}
		\begin{aligned}
			&\frac{d}{dt}\int_{\mathbb{T}^3} n dx+\var \int_{\mathbb{T}^3} (|\nabla \sqrt{n}|^2+|\nabla \sqrt{n}|^4)dx=\var \int_{\mathbb{T}^3} n^{-12}dx,  \label{6} 
		\end{aligned}
		\end{equation}	
		and
		\begin{equation}\label{310}
		\begin{aligned}	
		&\frac{d}{dt}\int_{\mathbb{T}^3} \rho dx+\var\int_{\mathbb{T}^3}|\nabla \rho|^2dx=0.
		\end{aligned}
		\end{equation}	
	By $(\ref{twoapp})_{3}$-$(\ref{twoapp})_{4}$, we show 		
		\begin{equation}\label{3.4}
		\begin{aligned}
			&\frac{d}{dt}\int_{\mathbb{T}^3}\big{(}\frac{1}{2}\rho |u|^2+\frac{A\rho^{\gamma}}{\gamma-1}+\frac{\delta \rho^{\gamma_{0}}}{\gamma_{0}-1}\big{)}dx-\int_{\mathbb{T}^3} \big(\kappa n(v-u)\cdot u+\mu |\nabla u|^2+(\mu+\lambda)(\dive u)^2\big)dx  \\
			&\quad +\int_{\mathbb{T}^3}(\var(\gamma \rho^{\gamma-2}+\delta \gamma_{0}\rho^{\gamma_{0}-2})|\nabla \rho|^2+\var|u|^{10} \big)dx=0.    
					\end{aligned}
		\end{equation}	
		Meanwhile, we take the $L^2$-inner of $(\ref{twoapp})_2$ with $v$ to obtain
		\begin{equation}\label{3111}
		\begin{aligned}
			& \frac{d}{dt}\int_{\mathbb{T}^3} \frac{1}{2}n|v|^{2} dx+\int_{\mathbb{T}^3}\kappa n(v-u) \cdot vdx +\int_{\mathbb{T}^3} \eta n|\mathbb{D}(v)|^2dx   \\
			&\quad    +\int_{\mathbb{T}^3} (\var n|v|^{5}+\sqrt{\var}  n|\nabla v|^{2}+\frac{\var}{2} n^{-12}|v|^{2} +\frac{\var}{2} |\nabla\sqrt{n}|^2|v|^2+\frac{\var}{2}|\nabla\sqrt{n}|^4|v|^2)dx\\
			&\leq \int_{\mathbb{T}^3} n\dive v dx+\var\int_{\mathbb{T}^3} n|\nabla v|^2dx+\frac{\var}{4}\int_{\mathbb{T}^3}|\nabla\sqrt{n}|^2|v|^2dx.
		\end{aligned}
		\end{equation}	
	One deduces after adding (\ref{3.4})-(\ref{3111}) together that
		\begin{equation}\label{energyloss}
		\begin{aligned}
		&\frac{d}{dt}\int_{\mathbb{T}^3}\big{(} \frac{1}{2}n|v|^{2} +\frac{1}{2}\rho |u|^2+\frac{A\rho^{\gamma}}{\gamma-1}+\frac{\delta \rho^{\gamma_{0}}}{\gamma_{0}-1}\big{)}dx\\
		&\quad+\int_{\mathbb{T}^3}\big(\kappa n|v-u|^2+\eta n|\mathbb{D}(v)|^2+\mu |\nabla u|^2+(\mu+\lambda)(\dive u)^2\big)dx\\
		&\quad+\int_{\mathbb{T}^3}\big(\var n|v|^{5} +(\sqrt{\var}-\var)  n|\nabla v|^{2}+\frac{\var}{2} n^{-12}|v|^{2} +\frac{\var}{4} |\nabla\sqrt{n}|^2|v|^2+\frac{\var}{2}|\nabla\sqrt{n}|^4|v|^2 \big)dx\\
		&\quad+\int_{\mathbb{T}^3}\big(\var |u|^{10}+\var(\gamma \rho^{\gamma-2}+\delta \gamma_{0}\rho^{\gamma_{0}-2})|\nabla \rho|^2\big)dx \le \int_{\mathbb{T}^3} n\dive v dx.  
		\end{aligned}
		\end{equation}		
We would like to mention that if we estimate the usual term $n\log{n}$ to cancel the first term on the right-hand side of (\ref{3111}), then the diffusion terms in $(\ref{twoapp})_{1}$ will cause an additional difficulty since $1+\log{n}$ may not be positive. To overcome this difficulty, we have by Young$'$s inequality and the fact $|\dive v|^2\leq 3|\mathbb{D}(v)|^2$ that
\begin{equation}\label{312}
\begin{aligned}
&\int_{\mathbb{T}^3} n\dive v dx\leq \int_{\mathbb{T}^3} n\chi(n)\dive v dx+\frac{\eta}{4} \int_{\mathbb{T}^3}n|\mathbb{D}(v)|^2 dx+\frac{C}{\eta},
\end{aligned}
\end{equation}
where $\chi(s)\geq0$ is a smooth function on $\mathbb{R}_{+}$ satisfying $\chi(s)=0$ for $s\in[0,1]$, $\chi(s)=1$ for $s\geq e$ and $3\chi(s)+2s\chi'(s)\geq0$ for $s\geq0$. Denoting $\Pi(n)=n\int_{0}^{n}s^{-1}\chi(s)ds$, we get from  $(\ref{twoapp})_{1}$ and $\Pi'(n)+2n\Pi''(n)=\int_{0}^{n} s^{-1} \chi(s) ds +3 \chi(n)+2n\chi'(n)\geq 0$ that 
		\begin{equation}\label{3121}
		\begin{aligned}
		\frac{d}{dt} \int_{\mathbb{T}^3} \Pi(n)dx &\le \frac{d}{dt} \int_{\mathbb{T}^3} \Pi(n)dx+\var\int_{\mathbb{T}^3} (\Pi'(n)+2n\Pi''(n)) |\nabla\sqrt{n}|^2(1+|\nabla\sqrt{n}|^2) dx \\
			&=-\int_{\mathbb{T}^3} n\chi(n)\dive v dx+\var\int_{\mathbb{T}^3} \Pi'(n) n^{-12}dx\\
			&\leq -\int_{\mathbb{T}^3} n\chi(n)\dive v dx+\frac{\var^2}{26} \int_{\mathbb{T}^3} n^{-25}dx +C,
		\end{aligned}
		\end{equation}
		where one has used $|\Pi^{'}(s)|\leq C(1+s)$ and the simple fact
		\begin{align}
 n^{-q}\leq C(n^{-25}+1),\quad\quad q\in(0,25).\label{factpp}
\end{align}
		In addition, according to $(\ref{twoapp})_{1}$ and $|\dive v|^2\leq 3|\mathbb{D}(v)|^2$, we also get 
		      \begin{equation}\label{7}
		\begin{aligned}
		&\frac{d}{dt} \int_{\mathbb{T}^3} \frac{\var}{78}n^{-12}dx+\int_{\mathbb{T}^3} (\frac{50\var^2}{13}  n^{-13} |\nabla \sqrt{n}|^2(1+|\nabla \sqrt{n}|^2)+\frac{2\var^2}{13} n^{-25})dx\\
		&=\frac{\var}{6}\int_{\mathbb{T}^3}n^{-12} \dive v dx\leq  \frac{\var^2}{13}\int_{\mathbb{T}^3} n^{-25} dx+\frac{1}{2}\int_{\mathbb{T}^3}n|\mathbb{D}(v)|^2dx.
	       \end{aligned}
		\end{equation}
		The combination of $(\ref{6})$-$(\ref{7})$ and the Gr${\rm\ddot{o}}$nwall inequality leads to $(\ref{energyvar})$.
	\end{proof}
	
	\begin{remark}
 We are able to recover the energy inequality uniformly in time after taking the limit as $\var\rightarrow0$, see the proof of Proposition \ref{weakdelta}.
	\end{remark}

	Next, in order to obtain the uniform spatial derivative estimate of $\sqrt{n}$, we show a Bresch-Desjardins type entropy estimates.
	\begin{lemma}\label{lemma32}
		For any $\var\in(0,\frac{1}{4})$, $\delta\in(0,1)$ and given time $T>0$, let $(n,v,\rho,u)$ be any strong solution to the IVP $(\ref{twoapp})$ for $t\in (0,T]$. Then, under the assumptions of Theorem \ref{theorem11}, it holds
		  \begin{equation}\label{BDvar}
		\begin{aligned}
			&\sup_{t\in[0,T]}\int_{\mathbb{T}^3} (|\nabla \sqrt{n}|^2+\var|\nabla \sqrt{n}|^4)dx+\int_{0}^T\int_{\mathbb{T}^3} (n|\mathbb{A}(v)|^2+n|\nabla v|^2+|\nabla \sqrt{n}|^2)dxdt   \\
			&\quad\quad+\var\int_{0}^T\int_{\mathbb{T}^3} (|\nabla^2 \sqrt{n}|^2+|\nabla \sqrt{n}|^2|\nabla^2 \sqrt{n}|^2)dxdt+\var^2\int_{0}^T\int_{\mathbb{T}^3} |\nabla \sqrt{n}|^4|\nabla^2 \sqrt{n}|^2dxdt  \le C_{T},
	        \end{aligned}
		\end{equation}
		where $\mathbb{A}(v):=\frac{\nabla v-(\nabla v)^{tr}}{2}$ denotes the antisymmetric part of the gradient, and $C_{T}>0$ is a constant independent of $\varepsilon$ and $\delta$.
	\end{lemma}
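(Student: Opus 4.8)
### Proof proposal for Lemma \ref{lemma32}

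The plan is to derive the Bresch--Desjardins (BD) entropy estimate by testing the momentum equation $(\ref{twoapp})_2$ with a shifted velocity $v + \nabla \log n$ (equivalently, working with the effective velocity $w := v + \nabla\log n$ and exploiting the special structure $\lambda(n) = n\mu'(n) - \mu(n)$ that holds here with $\mu(n) = \eta n$). Concretely, I would first compute $\frac{d}{dt}\int_{\mathbb{T}^3}\tfrac12 n|v+\nabla\log n|^2\,dx$ using the continuity equation $(\ref{twoapp})_1$ and the momentum equation $(\ref{twoapp})_2$. The algebraic identity underlying BD is that $\dive(n\mathbb{D}(v))$ tested against $\nabla\log n$ produces, after integration by parts, the good term $\int n|\mathbb{A}(v)|^2$ (the antisymmetric part) plus the Hessian term $\int |\nabla^2\sqrt n|^2$-type contributions coming from $\nabla n\cdot\nabla\log n$; the symmetric part $\mathbb{D}(v)$ recombines with $\tfrac{d}{dt}\int n|v|^2$ from the basic energy to cancel the ill-signed cross terms. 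I would add the basic energy identity from Lemma \ref{lemma31} (or re-derive the relevant pieces) so that the pressure term $\nabla(A n)$... wait, here the first phase has pressure exactly $\nabla n$, so $\int \nabla n\cdot\nabla\log n\,dx = \int |\nabla n|^2/n\,dx = 4\int|\nabla\sqrt n|^2\,dx$ gives directly the dissipation term $\int_0^T\!\!\int |\nabla\sqrt n|^2$ after time integration (modulo the $e^{t}$ from Grönwall, but in fact the mass is conserved up to $\var$-corrections so this term has a good sign).

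The key steps, in order: (i) rewrite $(\ref{twoapp})_2$ in the form $n(v+\nabla\log n)_t + \ldots$; here one must carefully track the artificial terms $\var n|v|^3 v$, $\var n^{-12}v$, $\sqrt\var\,\dive(n\nabla v)$, $\var\sqrt n|\nabla\sqrt n|^2\nabla\sqrt n\cdot\nabla v$, and the contributions of the modified continuity equation $n_t + \dive(nv) = \var\sqrt n\Delta\sqrt n + \var\sqrt n\dive(|\nabla\sqrt n|^2\nabla\sqrt n) + \var n^{-12}$ to the time derivative of $\nabla\log n$. (ii) Test with $\nabla\log n$ and integrate by parts, collecting the good terms $\int n|\mathbb{A}(v)|^2$, $\int|\nabla\sqrt n|^2$, and the $\var$-weighted higher-order terms $\var\int|\nabla^2\sqrt n|^2$, $\var\int|\nabla\sqrt n|^2|\nabla^2\sqrt n|^2$, $\var^2\int|\nabla\sqrt n|^4|\nabla^2\sqrt n|^2$ — these last three come precisely from the capillarity-like regularizations $\var\sqrt n\Delta\sqrt n$ and $\var\sqrt n\dive(|\nabla\sqrt n|^2\nabla\sqrt n)$ when paired against $\Delta\log n \sim \Delta\sqrt n/\sqrt n$. (iii) Bound the drag term $\kappa n(v-u)\cdot\nabla\log n$ by $\frac{\eta}{8}\int n|\nabla v|^2$ (after noting $\nabla\log n = 2\nabla\sqrt n/\sqrt n$ and using $|\nabla\sqrt n|$ controlled by the left side) plus $\kappa\int n|v-u|^2$ plus terms absorbable into the energy estimate of Lemma \ref{lemma31}. (iv) Handle the coupling cross term by combining the two equations so that $\int\kappa n(v-u)\cdot u$ and $\int\kappa n(v-u)\cdot v$ reassemble into $-\kappa\int n|v-u|^2$, which is already controlled. (v) Observe that $n|\nabla v|^2 \le C(n|\mathbb{A}(v)|^2 + n|\mathbb{D}(v)|^2)$ and that $\int_0^T\!\!\int n|\mathbb{D}(v)|^2$ is bounded by Lemma \ref{lemma31}, so the term $\int_0^T\!\!\int n|\nabla v|^2$ on the left follows once $\int n|\mathbb{A}(v)|^2$ is controlled. (vi) Close via Grönwall using the uniform bounds of Lemma \ref{lemma31} on $\int n|v|^2$, $\int\rho|u|^2$, $\int\rho^\gamma$, the dissipation integrals, and the controlled initial data $\nabla\sqrt{n_{0,\delta}}\to\nabla\sqrt{n_0}$ in $L^2$ together with $\var|\nabla\sqrt{n_{0,\delta}}|^4$ bounded (using $0\le j_\delta\le\delta^{-1/(2\gamma_0)}$ and $|\nabla j_\delta|\le C\delta^{-1/8}j_\delta$, so that $\var\|\nabla\sqrt{n_{0,\delta}}\|_{L^4}^4$ is bounded uniformly for $\var$ small relative to $\delta$).

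The main obstacle I expect is item (ii) combined with the bookkeeping of the artificial diffusion terms: the $\var$-regularization in $(\ref{twoapp})_1$ is not the usual $\var\Delta n$ but the degenerate-capillarity form $\var\sqrt n\Delta\sqrt n + \var\sqrt n\dive(|\nabla\sqrt n|^2\nabla\sqrt n)$, designed precisely so that the BD structure survives; verifying that $\frac{d}{dt}\int n|\nabla\log n|^2$ picks up exactly the claimed good signs (and that the quartic term $\var\sqrt n\dive(|\nabla\sqrt n|^2\nabla\sqrt n)$ yields $\var\int|\nabla\sqrt n|^2|\nabla^2\sqrt n|^2 + \var^2\int|\nabla\sqrt n|^4|\nabla^2\sqrt n|^2$ rather than something with a bad sign) requires a delicate computation. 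A secondary difficulty is the term $\var\sqrt n|\nabla\sqrt n|^2\nabla\sqrt n\cdot\nabla v$ in $(\ref{twoapp})_2$: tested against $\nabla\log n$ it contributes a cubic-in-$\nabla\sqrt n$ times $\nabla v$ term which must be split by Young's inequality between $\sqrt\var\,\var^{1/2}\int n|\nabla v|^2$ (from the extra $\sqrt\var\dive(n\nabla v)$ dissipation in $(\ref{twoapp})_2$) and $\var\int|\nabla\sqrt n|^4$-type terms controlled by Lemma \ref{lemma31}. Once these $\var$-dependent pieces are shown to close with constants uniform in $\var,\delta$, the remaining estimates are the standard BD manipulations and Grönwall's inequality finishes the proof.
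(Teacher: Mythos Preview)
Your overall strategy---the Bresch--Desjardins entropy via an effective velocity $w=v+c_{0}\nabla\log n$---is the same as the paper's, and steps (iii)--(vi) are in the right spirit. But there is a genuine gap in step (ii), and it is exactly at the point you flag as ``the main obstacle''.

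First a minor correction: the shift constant is not $1$ but $c_{0}=\eta+\sqrt{\var}$, chosen so that the total symmetric viscosity $(\eta+\sqrt{\var})\dive(n\mathbb{D}v)$ (recall $\sqrt{\var}\dive(n\nabla v)=\sqrt{\var}\dive(n\mathbb{D}v)+\sqrt{\var}\dive(n\mathbb{A}v)$) is exactly compensated and the reformulated momentum equation for $w$ carries $-c_{0}\dive(n\mathbb{D}w)+\sqrt{\var}\dive(n\mathbb{A}v)$; with $c_{0}=1$ you do not get this clean split.

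The real gap is your claim that ``these last three [good terms] come precisely from the capillarity-like regularizations \dots\ when paired against $\Delta\log n$''. Pairing $G:=\var\sqrt{n}\Delta\sqrt{n}+\var\sqrt{n}\dive(|\nabla\sqrt{n}|^{2}\nabla\sqrt{n})+\var n^{-12}$ against $\Delta\log n$ does yield $-\var c_{0}\int(|\Delta\sqrt{n}|^{2}+|\nabla\sqrt{n}|^{2}|\nabla^{2}\sqrt{n}|^{2}+\dots)$, i.e.\ the $\var$-order dissipations. It does \emph{not} produce the $\var^{2}\int|\nabla\sqrt{n}|^{4}|\nabla^{2}\sqrt{n}|^{2}$ term. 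Worse, when you estimate $-\int G\,\dive v\,dx$ by Young's inequality you are forced to put $\tfrac{\var^{2}}{8}\int\bigl(\Delta\sqrt{n}+\dive(|\nabla\sqrt{n}|^{2}\nabla\sqrt{n})\bigr)^{2}dx$ on the right-hand side; expanding the square produces a term of order $\var^{2}\int|\nabla\sqrt{n}|^{4}|\nabla^{2}\sqrt{n}|^{2}$ which cannot be absorbed by the $\var$-order dissipations just obtained. The BD computation alone therefore does not close.

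The missing ingredient is a separate parabolic estimate on $\sqrt{n}$: rewrite $(\ref{twoapp})_{1}$ as $2(\sqrt{n})_{t}-\var\Delta\sqrt{n}-\var\dive(|\nabla\sqrt{n}|^{2}\nabla\sqrt{n})=-2v\cdot\nabla\sqrt{n}-\sqrt{n}\dive v+\var n^{-25/2}$, multiply by $\var\bigl(\Delta\sqrt{n}+\dive(|\nabla\sqrt{n}|^{2}\nabla\sqrt{n})\bigr)$, and integrate. This produces both $\tfrac{d}{dt}\int(\var|\nabla\sqrt{n}|^{2}+\tfrac{\var}{2}|\nabla\sqrt{n}|^{4})dx$ (whence the $\sup_{t}\var\int|\nabla\sqrt{n}|^{4}$ bound in the lemma) and the good dissipation $\var^{2}\int\bigl(\Delta\sqrt{n}+\dive(|\nabla\sqrt{n}|^{2}\nabla\sqrt{n})\bigr)^{2}dx$, which (i) absorbs the bad $\tfrac{\var^{2}}{8}$ term above and (ii), after expanding via $\int\dive(|\nabla\sqrt{n}|^{2}\nabla\sqrt{n})\Delta\sqrt{n}\,dx\geq 0$, yields exactly the $\var^{2}\int|\nabla\sqrt{n}|^{4}|\nabla^{2}\sqrt{n}|^{2}$ control. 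Adding this to the BD inequality and invoking Lemma~\ref{lemma31} closes the estimate. Without this step your argument cannot reach the highest-order term in \eqref{BDvar}.
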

	
	\begin{proof}
	Let $c_{0}>0$ be a constant to be chosen later. We denote the effective velocity
	$$
	w:=v+c_{0}\nabla\log n
	$$ 
	to rewrite the mass equation $(\ref{twoapp})_1$ by
	\begin{equation}\nonumber
	\begin{aligned}
	&n_{t}+\dive (nw)=c_{0}\Delta n+G,\quad\quad G:=\var \sqrt{n} \Delta \sqrt{n}+\var \sqrt{n} \dive (|\nabla \sqrt{n}|^2 \nabla \sqrt{n})+\var n^{-12},
	\end{aligned}
	\end{equation}
	and the momentum equation $(\ref{twoapp})_2$ by 
	\begin{equation}\nonumber
	\begin{aligned}
	&(nw)_{t}+\dive(nw\otimes w)+\nabla n-c_{0}\Delta(nw)+\var n|v|^3v+\var n^{-12} v\\
	&\quad\quad\quad=(\eta+\sqrt{\var}-2c_{0})\dive (n\mathbb{D}w)+\sqrt{\var}\dive (n\mathbb{A}(v))-(c_{0}^2+(\eta+\sqrt{\var}-2c_{0})c_{0})\dive (n\nabla^2\log{n})\\
	&\quad\quad\quad\quad+\nabla G+G v+\var\sqrt{n}|\nabla\sqrt{n}|^2\nabla\sqrt{n}\cdot\nabla v,
	\end{aligned}
	\end{equation}
	where one has used the following equalities:
	\begin{equation}\nonumber
	\left\{
	\begin{aligned}
	&(n\nabla\log{n})_{t}=-\nabla\dive (nv)+\nabla G,\\
	&\dive (n v\otimes \nabla\log{n}+n\nabla\log{n}\otimes v)=\Delta(nv)-2\dive (n\mathbb{D}v)+\nabla\dive (nv),\\
	&\dive(n\nabla\log{n}\otimes\nabla\log{n})=\Delta(n\nabla\log{n})-\dive (n\nabla^2\log{n}).
	\end{aligned}
	\right.
	\end{equation}
	Thus, choosing 
	$$
	c_{0}=\eta+\sqrt{\var},
	$$
	we derive the following reformulated equations:
	\begin{equation}\label{reformula1}
	\left\{
	\begin{aligned}
	&n_{t}+\dive (nw)=c_{0}\Delta n+G,\\
	&n(w_{t}+w\cdot \nabla w)+\nabla n+\kappa n(v-u)+c_{0}w\Delta n-c_{0}\Delta(nw)+\var n|v|^3v+\var n^{-12} v \\
	&\quad\quad\quad=-c_{0}\dive (n\mathbb{D}w)+\sqrt{\var}\dive (n\mathbb{A}(v))+\nabla G-c_{0} G\nabla \log{n}+\var\sqrt{n}|\nabla\sqrt{n}|^2\nabla\sqrt{n}\cdot \nabla v.
	\end{aligned}
	\right.
	\end{equation}
	Thence we obtain after taking the $L^2(\mathbb{T}^3)$-inner product of $(\ref{reformula1})_{2}$ with $w$ and making use of $(\ref{reformula1})_{1}$ that
		\begin{equation}\label{reformula111}
	\begin{aligned}
	&\frac{d}{dt}\int_{\mathbb{T}^3}\frac{1}{2}n|w|^2dx+\int_{\mathbb{T}^3} ((c_{0}+\sqrt{\var})n|\mathbb{A}(v)|^2+2c_{0}|\nabla\sqrt{n}|^2+\nabla n\cdot v+\kappa n(v-u)v)dx \\
	&= -c_{0}\int_{\mathbb{T}^3}(\kappa n(v-u) +\var n|v|^3v)\cdot\nabla \log{n}dx-\int_{\mathbb{T}^3} G\dive vdx-c_{0}\int_{\mathbb{T}^{3} }G\Delta\log{n} dx\\
	&\quad+\int_{\mathbb{T}^3} G (\frac{1}{2}|w|^2-c_{0}\nabla\log{n}\cdot w)  dx+\var \int_{\mathbb{T}^3}	\sqrt{n}|\nabla\sqrt{n}|^2\nabla\sqrt{n}\cdot\nabla v\cdot w dx-\var\int_{\mathbb{T}^3} n^{-12} v\cdot w dx.
		\end{aligned}
	\end{equation}
We estimate the terms on the right-hand side of (\ref{reformula111}) as follows. First, we can show after using H${\rm{\ddot{o}}}$lder's and Young's inequalities that
	\begin{equation}\label{I2}
	\begin{aligned}
	&-c_{0}\int_{\mathbb{T}^3}(\kappa n(v-u)+\var n|v|^3v)\cdot \nabla \log{n}dx\\
	&\quad\leq \kappa c_{0} \big(\int_{\mathbb{T}^3} n|v-u|^2dx\big)^{\frac{1}{2}}\big(\int_{\mathbb{T}^3}|\nabla \sqrt{n}|^2dx\big)^{\frac{1}{2}}\\
	&\quad\quad+2c_{0}\var \big( \int_{\mathbb{T}^3} n|v|^{5}dx\big)^{\frac{7}{10}} \big( \int_{\mathbb{T}^3} |\nabla\sqrt{n}|^4|v|^2dx\big)^{\frac{1}{4}} \big(\int_{\mathbb{T}^{3}} n^{-4}dx\big)^{\frac{1}{20}}\\
	&\quad\leq c_{0} \int_{\mathbb{T}^3}|\nabla \sqrt{n}|^2dx+\var\int_{\mathbb{T}^3} n|v|^5 dx+\frac{\var}{2}\int_{\mathbb{T}^3}|\nabla \sqrt{n}|^4 |v|^2dx\\
	&\quad\quad+C\int_{\mathbb{T}^3} n|v-u|^2dx+C\var \int_{\mathbb{T}^3} n^{-25}dx+C\var.
		\end{aligned}
	\end{equation}	
Similarly, one has	
		\begin{equation}\label{I3}
	\begin{aligned}
	&-\int_{\mathbb{T}^3}G \dive udx\\
	&\quad=-\var\int_{\mathbb{T}^3} \sqrt{n}\big (\Delta\sqrt{n}+\dive (|\nabla \sqrt{n}|^2\nabla\sqrt{n}) \big) \dive u dx+\var\int_{\mathbb{T}^3}n^{-12} \dive udx\\
	&\quad\leq \frac{\var^2}{8} \int_{\mathbb{T}^3}\big (\Delta\sqrt{n}+\dive (|\nabla \sqrt{n}|^2\nabla\sqrt{n}) \big) ^2dx+C\int_{\mathbb{T}^3} n|\dive u|^2dx+C\var^2\int_{\mathbb{T}^3} n^{-25}dx.
			\end{aligned}
	\end{equation}
And as the property
\begin{equation}\nonumber
	\begin{aligned}
	\int_{\mathbb{T}^3}\dive(|\nabla\sqrt{n}|^2\nabla\sqrt{n})\Delta\sqrt{n}dx&=-\int_{\mathbb{T}^3}|\nabla\sqrt{n}|^2\nabla\sqrt{n}\cdot\nabla\Delta\sqrt{n}dx  \notag  \\
	&=\int_{\mathbb{T}^3}|\nabla\sqrt{n}|^2|\nabla^2\sqrt{n}|^2dx+\frac{1}{2}\int_{\mathbb{T}^3}|\nabla|\nabla\sqrt{n}|^2|^2dx,
	\end{aligned}
	\end{equation}
the fourth term on the right-hand side of (\ref{reformula111}) can be decomposed by
	\begin{equation}\label{3.16}
	\begin{aligned}
	&-c_{0}\int_{\mathbb{T}^{3} }G\Delta\log{n} dx\\
	&\quad=-2c_{0}\var\int_{\mathbb{T}^3} \sqrt{n}(\Delta\sqrt{n}+\dive (|\nabla \sqrt{n}|^2\nabla\sqrt{n}) )(n^{-\frac{1}{2}}\Delta\sqrt{n}-n^{-1}|\nabla \sqrt{n}|^2)dx\\
	&\quad\quad-\var c_{0}\int_{\mathbb{T}^3}n^{-12}\Delta\log{n}dx\\
	&\quad=-\var c_{0}\int_{\mathbb{T}^3}( 2|\Delta \sqrt{n}|^2+2|\nabla\sqrt{n}|^2|\nabla^2\sqrt{n}|^2+|\nabla|\nabla\sqrt{n}|^2|^2+48n^{-13}|\nabla \sqrt{n}|^2)dx\\
	&\quad\quad+2 c_{0}\var\int_{\mathbb{T}^3}(\Delta\sqrt{n}+\dive (|\nabla \sqrt{n}|^2\nabla\sqrt{n}))n^{-\frac{1}{2}}|\nabla \sqrt{n}|^2dx.
	\end{aligned}
	\end{equation}
	Note that the Sobolev inequality implies that
	\begin{equation}
		\left\{
	\begin{aligned}
		&\| \nabla\sqrt{n}\|_{L^{6}} \le C \| \nabla^2 \sqrt{n}\|_{L^{2}} \le C \| \Delta \sqrt{n}\|_{L^{2}} \notag \\
	&\| |\nabla\sqrt{n}|^2\|_{L^{4}}\leq \| |\nabla\sqrt{n}|^2\|_{L^{2}}^{\frac{1}{4}} \| |\nabla\sqrt{n}|^2\|_{L^{6}}^{\frac{3}{4}}\leq C\| \nabla\sqrt{n}\|_{L^{2}}^{\frac{1}{2}}\|\nabla |\nabla \sqrt{n}|^2\|_{L^2}^{\frac{3}{4}}+\| \nabla\sqrt{n}\|_{L^{2}},  \notag
	\end{aligned}
    \right.
	\end{equation}
which together with (\ref{factpp}) gives rise to
	\begin{equation}\label{therefore}
	\begin{aligned}
	&\var\int_{\mathbb{T}^3}(\Delta\sqrt{n}+\dive (|\nabla \sqrt{n}|^2\nabla\sqrt{n}))n^{-\frac{1}{2}}|\nabla \sqrt{n}|^2dx\\
	&\quad\leq \var \| \Delta \sqrt{n}\|_{L^{2}}\| |\nabla\sqrt{n}|^2\|_{L^{4}}\|n^{-\frac{1}{2}}\|_{L^{4}}+3\var\|\nabla\sqrt{n}\Delta \sqrt{n}\|_{L^{2}}\| |\nabla\sqrt{n}|^2\|_{L^{4}}\| \nabla\sqrt{n}\|_{L^{6}}\|n^{-\frac{1}{2}}\|_{L^{12}} \\
	&\quad\leq  \frac{c_{0}\var}{4}\int_{\mathbb{T}^3}( 2|\Delta \sqrt{n}|^2+2|\nabla\sqrt{n}|^2|\nabla^2\sqrt{n}|^2+|\nabla|\nabla\sqrt{n}|^2|^2+48n^{-13}|\nabla \sqrt{n}|^2)dx\\
	&\quad\quad+C\var\int_{\mathbb{T}^3}(|\nabla\sqrt{n}|^4+|\nabla\sqrt{n}|^2+n^{-25})dx+C\var.
	\end{aligned}
	\end{equation}
	Therefore, we have
		\begin{equation}\label{I4}
	\begin{aligned}
	&-c_{0}\int_{\mathbb{T}^{3} }G\Delta\log{n} dx\\
	&\quad\leq-\frac{3\var c_{0}}{4}\int_{\mathbb{T}^3}( 2|\Delta \sqrt{n}|^2+2|\nabla\sqrt{n}|^2|\nabla^2\sqrt{n}|^2+|\nabla|\nabla\sqrt{n}|^2|^2+48n^{-13}|\nabla \sqrt{n}|^2)dx \\
	&\quad\quad+C\var\int_{\mathbb{T}^3}(|\nabla\sqrt{n}|^4+|\nabla\sqrt{n}|^2+n^{-25})dx+C\var.
		\end{aligned}
	\end{equation}	
Finally, by \eqref{therefore} and integration by parts, we have
		\begin{equation}\label{3190}
	\begin{aligned}	
	&\int_{\mathbb{T}^3} G(\frac{1}{2}|w|^2- c_{0}\nabla\log{n}\cdot w)  dx+\var \int_{\mathbb{T}^3}	\sqrt{n}|\nabla\sqrt{n}|^2\nabla\sqrt{n}\cdot\nabla v\cdot w dx-\var\int_{\mathbb{T}^3} n^{-12} v\cdot w dx\\
	&\quad=-\int_{\mathbb{T}^3} (\var n|v|^{5}+\sqrt{\var}  n|\nabla v|^{2}+\frac{\var}{2} n^{-12}|v|^{2} +\frac{\var}{2} |\nabla\sqrt{n}|^2|v|^2+\frac{\var}{2}|\nabla\sqrt{n}|^4|v|^2)dx\\
	&\quad\quad+2c_{0}^2\var\int_{\mathbb{T}^3}(\Delta\sqrt{n}+\dive (|\nabla \sqrt{n}|^2\nabla\sqrt{n}))n^{-\frac{1}{2}}|\nabla \sqrt{n}|^2dx\\
		&\quad\quad-2\var c_{0}\int_{\mathbb{T}^3}v\cdot \dive (|\nabla\sqrt{n}|^2\nabla\sqrt{n}\otimes\nabla\sqrt{n})dx+\var\int_{\mathbb{T}^3} n^{-12}(\frac{1}{2} |v|^2+\frac{c_{0}^2}{2}|\nabla\log{n}|^2-v\cdot w)dx\\
	&\quad\leq  \frac{\var c_{0}}{4}\int_{\mathbb{T}^3}( 2|\Delta \sqrt{n}|^2+2|\nabla\sqrt{n}|^2|\nabla^2\sqrt{n}|^2+|\nabla|\nabla\sqrt{n}|^2|^2+48n^{-13}|\nabla \sqrt{n}|^2)dx    \\
	&\quad\quad+C\var\int_{\mathbb{T}^3}(|\nabla\sqrt{n}|^4|v|^2+|\nabla\sqrt{n}|^4+|\nabla\sqrt{n}|^2+n^{-25})dx+C\var.
	\end{aligned}
	\end{equation}
	Substituting the above estimates (\ref{I2})-(\ref{3190}) into (\ref{reformula1}) and using (\ref{factpp}), we get
	\begin{equation}\label{reformula11}
	\begin{aligned}
	&\frac{d}{dt}\int_{\mathbb{T}^3}\frac{1}{2}n|w|^2dx+\int_{\mathbb{T}^3} \big((c_{0}+\sqrt{\var})n|\mathbb{A}(v)|^2+c_{0}|\nabla\sqrt{n}|^2+\nabla n\cdot v+\kappa n(v-u)v \big)dx\\
	&\quad+\frac{\var c_{0}}{2}\int_{\mathbb{T}^3}( |\Delta \sqrt{n}|^2+|\nabla\sqrt{n}|^2|\nabla^2\sqrt{n}|^2+\frac{1}{2}|\nabla|\nabla\sqrt{n}|^2|^2+24n^{-13}|\nabla\sqrt{ n}|^2)dx\\
	&\leq \frac{\var^2}{8} \int_{\mathbb{T}^3}\big (\Delta\sqrt{n}+\dive (|\nabla \sqrt{n}|^2\nabla\sqrt{n}) \big) ^2dx\\
	&\quad+C\int_{\mathbb{T}^3}( n|v-u|^2+\var n|v|^5+\var|\nabla \sqrt{n}|^4+\var|\nabla \sqrt{n}|^2+\var |\nabla\sqrt{n}|^4|v|^2+\var n^{-25})dx+C\var.
		\end{aligned}
	\end{equation}

Furthermore, by $(\ref{twoapp})_{1}$, the equation of $\sqrt{n}$ reads
		\begin{align}
			2(\sqrt{n})_{t}-\var \Delta \sqrt{n}-\var \dive (|\nabla \sqrt{n}|^2 \nabla \sqrt{n})=-2v \cdot \nabla \sqrt{n}-\sqrt{n} \dive v+\var \sqrt{n}^{-25} .          \label{211}
		\end{align}
		One multiplies $(\ref{211})$ by $\var(\Delta \sqrt{n}+\dive (|\nabla \sqrt{n}|^2 \nabla \sqrt{n}))$ and integrates the resulted equation by parts to get
		\begin{align}
			&\frac{d}{dt}\int_{\mathbb{T}^3}( \var |\nabla \sqrt{n}|^2 +\frac{\var}{2} |\nabla \sqrt{n}|^4 )dx+\var^2 \int_{\mathbb{T}^3} (\Delta \sqrt{n}+\dive (|\nabla \sqrt{n}|^2 \nabla \sqrt{n}))^2 dx  \notag   \\
			&\quad+25\var^2\int_{\mathbb{T}^3} n^{-13}(|\nabla \sqrt{n}|^2+|\nabla \sqrt{n}|^4)dx   \notag  \\
			&=\var \int_{\mathbb{T}^3} (\Delta \sqrt{n}+\dive (|\nabla \sqrt{n}|^2 \nabla \sqrt{n})) \sqrt{n}\dive v dx+2\var \int_{\mathbb{T}^3} (\Delta \sqrt{n}+\dive (|\nabla \sqrt{n}|^2 \nabla \sqrt{n}))v \cdot \nabla \sqrt{n}dx   \notag  \\
			&\le \frac{\var^2}{8}\int_{\mathbb{T}^3} (\Delta \sqrt{n}+\dive (|\nabla \sqrt{n}|^2 \nabla \sqrt{n}))^2dx+\frac{\var c_{0}}{8} \int_{\mathbb{T}^3} (|\Delta \sqrt{n}|^2+|\nabla \sqrt{n}|^2|\nabla^2 \sqrt{n}|^2)dx  \notag   \\
			&\quad+C\int_{\mathbb{T}^3} n(\dive v)^2dx+C\var\int_{\mathbb{T}^3} |v|^2|\nabla \sqrt{n}|^2(1+|\nabla \sqrt{n}|^2)dx,  \nonumber
		\end{align}
		which together with (\ref{reformula11}) and the fact
		\begin{align}
		&\int_{\mathbb{T}^3} (\Delta \sqrt{n}+\dive (|\nabla \sqrt{n}|^2 \nabla \sqrt{n}))^2 dx\nonumber\\
		&\geq \frac{1}{2}\int_{\mathbb{T}^3} \big(\dive (|\sqrt{n}|^2 \nabla \sqrt{n}))^2dx-2\int_{\mathbb{T}^3} (\Delta \sqrt{n})^2dx\nonumber\\
		&=\frac{1}{2}\int_{\mathbb{T}^3}  (|\nabla \sqrt{n}|^4|\nabla^2 \sqrt{n}|^2+(\nabla \sqrt{n}\cdot \nabla |\nabla \sqrt{n}|^2)^2+|\nabla \sqrt{n}|^2|\nabla |\nabla \sqrt{n}|^2|^2 \big)dx-2\int_{\mathbb{T}^3} (\Delta \sqrt{n})^2dx\nonumber
		\end{align} 
		leads to 
\begin{equation}\label{BDii}
\begin{aligned}
&\frac{d}{dt}\int_{\mathbb{T}^3}(\frac{1}{2}n|w|^2 +\var |\nabla \sqrt{n}|^2 +\frac{\var}{2} |\nabla \sqrt{n}|^4 )dx\\
&\quad+\int_{\mathbb{T}^3} ((\eta+2\sqrt{\var})n|\mathbb{A}(v)|^2+c_{0}|\nabla\sqrt{n}|^2+\nabla n\cdot v+\kappa n(v-u)v\big)dx\\
&\quad+\frac{\var}{4}(c_{0}-2\var)\int_{\mathbb{T}^3}( |\Delta \sqrt{n}|^2+|\nabla\sqrt{n}|^2|\nabla^2\sqrt{n}|^2+\frac{1}{2}|\nabla|\nabla\sqrt{n}|^2|^2+24 n^{-13}|\nabla n|^2)dx\\
&\quad+\frac{\var^2}{4}\int_{\mathbb{T}^3}( |\nabla \sqrt{n}|^4|\nabla^2 \sqrt{n}|^2+(\nabla \sqrt{n}\cdot \nabla |\nabla \sqrt{n}|^2)^2+|\nabla \sqrt{n}|^2|\nabla |\nabla \sqrt{n}|^2|^2 \big)dx\\
&\quad+25\var^2\int_{\mathbb{T}^3} n^{-13}(|\nabla \sqrt{n}|^2+|\nabla \sqrt{n}|^4)dx  \\
&\leq C\int_{\mathbb{T}^3}( n(\dive v)^2+n|v-u|^2)dx\\
&\quad+\var\int_{\mathbb{T}^3}( n|v|^5+|\nabla \sqrt{n}|^4+|\nabla \sqrt{n}|^2+ |v|^2|\nabla \sqrt{n}|^2(1+|\nabla \sqrt{n}|^2)+ n^{-25})dx+C\var.
\end{aligned}
\end{equation}
The combination of (\ref{energyvar}), (\ref{3.4}),  (\ref{312})-(\ref{3121}) and (\ref{BDii}) gives rise to (\ref{BDvar}). The proof of Lemma \ref{lemma32} is completed.
	\end{proof}
	
	\begin{remark}
	We will show that the Bresch-Desjardins type estimates do not depend on time after the limit process as $\var\rightarrow0$, see the proof of Proposition \ref{weakdelta}.
	\end{remark}

	The following Mellet-Vasseur type estimate will be used to prove the strong convergence of $\sqrt{n}v$ in $L^2(0,T;L^2(\mathbb{T}^3))$.
	\begin{lemma}\label{lemma33}
		For any $\var\in(0,\frac{1}{4})$, $\delta\in(0,1)$ and given time $T>0$, let $(n,v,\rho,u)$ be any strong solution to the IVP $(\ref{twoapp})$ for $t\in (0,T]$. Then, under the assumptions of Theorem \ref{theorem11}, it holds
		\begin{equation}\label{melletvar}
			\begin{aligned}
				&\sup_{t\in[0,T]}\int_{\mathbb{T}^3} n(1+|v|^2)\log{(1+|v|^2)}dx\leq C_{T},
			\end{aligned}
		\end{equation}
		where $C_{T}>0$ is a constant independent of $\varepsilon$ and $\delta$.
	\end{lemma}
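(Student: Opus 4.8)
The plan is to derive the Mellet--Vasseur type estimate by testing the momentum equation $(\ref{twoapp})_2$ against the function $(1+2\log(1+|v|^2))v$, which is the natural multiplier producing the weight $n(1+|v|^2)\log(1+|v|^2)$ after integration. Concretely, set $\varphi(s):=(1+s)\log(1+s)$ so that $\varphi'(s)=1+\log(1+s)$, and observe that
\begin{equation}\nonumber
\frac{d}{dt}\int_{\mathbb{T}^3} n\,\varphi(|v|^2)\,dx
=\int_{\mathbb{T}^3}\Big(n_t\,\varphi(|v|^2)+2n\,\varphi'(|v|^2)\,v\cdot v_t\Big)dx,
\end{equation}
so that combining $(\ref{twoapp})_1$ for the $n_t$-term and $(\ref{twoapp})_2$ for the $v_t$-term yields, after routine integration by parts, an identity of the form
\begin{equation}\nonumber
\frac{d}{dt}\int_{\mathbb{T}^3} n\,\varphi(|v|^2)\,dx+\text{(good dissipative terms)}=\sum_k R_k,
\end{equation}
where the good terms include $\eta\int n\,\varphi'(|v|^2)|\mathbb{D}(v)|^2$, $\sqrt{\var}\int n\,\varphi'(|v|^2)|\nabla v|^2$, the drag contribution, and the artificial terms $\var\int n|v|^3\varphi'(|v|^2)|v|^2$, etc., all of which have a favorable sign since $\varphi'\geq 1>0$.

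The remainders $R_k$ fall into three groups. First, the pressure term $\int \nabla n\cdot\big(\varphi'(|v|^2)v\big)dx$ is controlled by expanding $\nabla(\varphi'(|v|^2)v)$, bounding $|\varphi''(s)|s\leq C$ and $\varphi'(s)\leq C(1+\log(1+s))$, and absorbing the gradient-of-$v$ factors into the dissipation $\int n\,\varphi'(|v|^2)|\nabla v|^2$ at the cost of $C\int(1+|\nabla\sqrt n|^2)(1+\log(1+|v|^2))\,dx$, which is controlled by Lemma~\ref{lemma32}; the logarithmic growth is handled by Young's inequality $a\log(1+a)\le \var a^2+C_\var$ combined with the $L^5_{t,x}$-bound on $\sqrt n v$ coming from $\var\int n|v|^5$ (after noting $\log$ grows slower than any power, so one does not even need $\var$-smallness here). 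Second, the drag term $\kappa\int n(v-u)\cdot\varphi'(|v|^2)v\,dx$ is split using $v=(v-u)+u$: the part with $v-u$ is bounded by $\int n|v-u|^2\varphi'(|v|^2)dx\le \frac12\int n|v-u|^2\varphi''\ldots$ — more carefully, one uses $\varphi'(|v|^2)|v|\le C(1+|v|)(1+\log(1+|v|^2))$ together with the energy bound $\int_0^T\!\!\int n|v-u|^2\le C_T$ and the Mellet--Vasseur quantity itself, closing via Gr\"onwall; the part with $u$ uses $\int n|u|^2$, which needs $\sqrt n u\in L^2$, obtained from $\sqrt\rho u\in L^\infty_tL^2$ together with a bound on $n/\rho$ — or more robustly, one estimates $\kappa\int n u\cdot\varphi'(|v|^2)v$ directly by $C\int n\varphi(|v|^2)+C\int n|u|^2\log(1+\|u\|_{?})$ and absorbs using $u\in L^2_tH^1$. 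Third, the artificial-viscosity remainders (the terms carrying $\var$, $\var\sqrt n|\nabla\sqrt n|^2\nabla\sqrt n\cdot\nabla v$, $\var n^{-12}v$) are handled exactly as in Lemma~\ref{lemma32}: one uses $\varphi'(|v|^2)\le C(1+|v|^2)$ generously and the already-established uniform bounds $\var\int_0^T\!\!\int(n|\nabla v|^2+n|v|^5+(1+|v|^2)|\nabla\sqrt n|^2|\nabla^2\sqrt n|^2+\var n^{-25})\le C_T$ from (\ref{energyvar})--(\ref{BDvar}) to see these contribute at most $C_T$ on the right-hand side.

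After collecting everything, the inequality takes the schematic form
\begin{equation}\nonumber
\frac{d}{dt}\int_{\mathbb{T}^3} n\,\varphi(|v|^2)\,dx\le C\,\Big(1+\|\nabla u(t)\|_{L^2}^2\Big)\int_{\mathbb{T}^3} n\,\varphi(|v|^2)\,dx+h(t),
\end{equation}
with $\int_0^T h(t)\,dt\le C_T$ (gathering the pressure, drag-into-$u$, and artificial remainders) and $\int_0^T\|\nabla u\|_{L^2}^2\,dt\le C_T$ by Lemma~\ref{lemma31}. Since the regularized datum satisfies $\int n_{0,\delta}(1+|v_{0,\delta}|^2)\log(1+|v_{0,\delta}|^2)\,dx\le C$ by (\ref{dapp22}), Gr\"onwall's inequality with the integrable coefficient $1+\|\nabla u\|_{L^2}^2$ yields (\ref{melletvar}). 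I expect the main obstacle to be the careful bookkeeping in group two: ensuring the drag term's $u$-contribution and the pressure term's logarithmic factor are genuinely absorbable without a smallness condition, which forces one to exploit both the $\sqrt\var\,n|\nabla v|^2$ dissipation and the sublogarithmic-versus-power trick rather than naive Young estimates; by contrast, the algebraic identity producing the dissipation and the handling of the $\var$-terms are routine given Lemmas~\ref{lemma31}--\ref{lemma32}.
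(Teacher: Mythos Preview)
Your approach is essentially the paper's: multiply $(\ref{twoapp})_2$ by $(1+\log(1+|v|^2))v$, collect the sign-definite terms on the left, and estimate the remainders using Lemmas~\ref{lemma31}--\ref{lemma32} before integrating in time. Your anticipated ``main obstacle'' in the drag term dissolves once you write $(v-u)\cdot v=|v|^2-u\cdot v$ rather than splitting $v=(v-u)+u$: the $|v|^2$-contribution is good and stays on the left, while the sole bad term $\kappa\int_{\mathbb{T}^3} n\,u\cdot v\,(1+\log(1+|v|^2))\,dx$ is handled directly via $\log(1+|v|^2)\le C|v|^{1/4}$, H\"older, and $\|u\|_{L^6}\le C\|\nabla u\|_{L^2}$, yielding the bound $C\|\nabla u\|_{L^2}^2+C$ with no need for an $n/\rho$ estimate or a Gr\"onwall factor on the Mellet--Vasseur quantity itself. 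The pressure remainder likewise reduces, after integration by parts and the elementary bound $(\log(1+s))^2\le Cs$, to $C\int_{\mathbb{T}^3} n|v|^2\,dx+C\int_{\mathbb{T}^3} n|\nabla v|^2\,dx$, both already controlled.
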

	\begin{proof}
		Multiplying $(\ref{twoapp})_{2}$ by $(1+\log (1+|v|^{2}))v$, we  obtain after integration by parts that
		\begin{equation}\label{11}
			\begin{aligned}
			&\frac{d}{dt} \int_{\mathbb{T}^3} \frac{n(1+|v|^{2})}{2}\log(1+|v|^{2})dx+\int_{\mathbb{T}^3} n\log(1+|v|^{2})(\eta |\mathbb{D}(v)|^2+\sqrt{\var}|\nabla v|^{2})dx  \\
			&\quad+\var\int_{\mathbb{T}^3}  n |v|^{5}(1+\log(1+|v|^{2}))dx+\frac{\var}{2} \int_{\mathbb{T}^3} (1+|v|^2) \log(1+|v|^2) |\nabla \sqrt{n}|^4 dx   \\ 
			&\quad+\kappa\int_{\mathbb{T}^3}  n|v|^2(1+\log(1+|v|^{2}))dx  \\
			&=\frac{\var}{2}\int_{\mathbb{T}^3} (1+|v|^2)\sqrt{n}\Delta \sqrt{n} \log(1+|v|^{2}) dx +\frac{\var}{2}\int_{\mathbb{T}^3} n^{-12} (1+v^{2})\log(1+|v|^{2})dx   \\
			&\quad-\var\int_{\mathbb{T}^3} n^{-12} |v|^{2}(1+\log(1+|v|^{2}))dx+\int_{\mathbb{T}^3} v \cdot \nabla n (1+\log(1+|v|^{2}))dx    \\
			&\quad+\kappa\int_{\mathbb{T}^3}  n u\cdot v(1+\log(1+|v|^{2}))dx. 
			\end{aligned}
		\end{equation}
		The first term on right-hand side of $(\ref{11})$ can be estimated as 
		\begin{equation}\label{I_1}
			\begin{aligned}
			&\frac{\var}{2}\int_{\mathbb{T}^3} (1+|v|^2)\sqrt{n}\Delta \sqrt{n} \log(1+|v|^{2}) dx     \\
			&\le -\frac{\var}{8}\int_{\mathbb{T}^3} |\nabla \sqrt{n}|^2(1+|v|^2)\log(1+|v|^2)dx  +\frac{\var}{2}\int_{\mathbb{T}^3} n \log(1+|v|^2)|\nabla v|^2dx   \\
			&\quad +C\var\int_{\mathbb{T}^3} |\nabla \sqrt{n}|^2 |v|^2dx +C\var\int_{\mathbb{T}^3} n|\nabla v|^2 dxx.  
			\end{aligned}
		\end{equation}
	In addition, one has
	\begin{align}
		\frac{\var}{2}\int_{\mathbb{T}^3} n^{-12} (1+|v|^{2})\log(1+|v|^{2})dx-\var\int_{\mathbb{T}^3} n^{-12} |v|^{2}(1+\log(1+|v|^{2}))dx  \le C\var\int_{\mathbb{T}^3} n^{-25}dx+C\var,  \notag
	\end{align}
and
\begin{align}
	\int_{\mathbb{T}^3} v \cdot \nabla n (1+\log(1+|v|^{2}))dx \le C\int_{\mathbb{T}^3} n|v|^2dx+C\int_{\mathbb{T}^3} n|\nabla v|^2. \notag
\end{align}
For the last term on the right-hand side of $(\ref{11})$, we make use of \eqref{energyvar}, \eqref{BDvar} and the fact $\log(1+|v|^2)\le C|v|^{\frac{1}{4}}$ that 
\begin{equation}\label{kappa}
	\begin{aligned}
	&\kappa\int_{\mathbb{T}^3}  n u \cdot v(1+\log(1+|v|^{2}))dx  \\
	&\le C \|u\|_{L^6}(\|\sqrt{n}\|_{L^3}\|\sqrt{n}v\|_{L^2}+\|n^{\frac{3}{8}}\|_{L^{\frac{24}{5}}}\|n^{\frac{5}{8}}v^{\frac{5}{4}}\|_{L^{\frac{8}{5}}})  \\
	&\le C\|\nabla u\|_{L^2}(\|\sqrt{n}\|_{H^1}\|\sqrt{n}v\|_{L^2}+\|\sqrt{n}\|_{H^1}^{\frac{3}{4}}\|\sqrt{n}v\|^{\frac{5}{4}}_{L^2})  \\
	&\le C\int_{\mathbb{T}^3} |\nabla u|^2dx+C.
	\end{aligned}
\end{equation}
		Inserting the above estimates $(\ref{I_1})$-$(\ref{kappa})$ into $(\ref{11})$ and applying the Gr${\rm\ddot{o}}$nwall inequality, we gain $(\ref{melletvar})$. The proof of Lemma \ref{lemma33} is completed.
	\end{proof}

	Then, we establish the uniform $L^{\gamma+1}(0,T;L^{\gamma+1}(\mathbb{T}^3))$ estimate of $\rho$ so as to show the strong convergence of the pressure.
	\begin{lemma}\label{lemma34}
		For any $\var\in(0,\frac{1}{4})$, $\delta\in(0,1)$ and given time $T>0$, let $(n,v,\rho,u)$ be any strong solution to the IVP $(\ref{twoapp})$ for $t\in (0,T]$. Then, under the assumptions of Theorem \ref{theorem11}, we have
		\begin{equation}\label{gamma1}
			\begin{aligned}
				&\int_{0}^{T}\int_{\mathbb{T}^3} (\rho^{\gamma+1}+\delta \rho^{\gamma_{0}+1})dxdt\leq C_{\delta,T},
			\end{aligned}
		\end{equation}
		where $C_{\delta,T}>0$ is a constant independent of $\varepsilon$.
	\end{lemma}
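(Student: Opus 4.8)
The plan is to use the classical Bogovskii-type multiplier argument adapted to the approximate system $(\ref{twoapp})_{3}$-$(\ref{twoapp})_{4}$. First I would introduce the operator $\mathcal{B}=\dive^{-1}$ on $\mathbb{T}^3$ (restricted to the subspace of functions with zero mean), which for $g$ with $\int_{\mathbb{T}^3}g\,dx=0$ yields a vector field $\mathcal{B}[g]$ solving $\dive\mathcal{B}[g]=g$ together with the standard bounds $\|\mathcal{B}[g]\|_{W^{1,p}}\le C\|g\|_{L^p}$, and $\|\mathcal{B}[g]\|_{L^q}\le C\|h\|_{L^q}$ whenever $g=\dive h$ with $h\cdot\nu=0$. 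Then I would test the momentum equation $(\ref{twoapp})_{4}$ with the function
\begin{equation}\nonumber
\Phi:=\mathcal{B}\Big[\rho^{\theta}-\frac{1}{|\mathbb{T}^3|}\int_{\mathbb{T}^3}\rho^{\theta}\,dx\Big],
\end{equation}
for a suitable small exponent $\theta>0$ (in practice $\theta$ chosen so that $\gamma+\theta=\gamma+1$, i.e.\ $\theta=1$, works since the energy estimate already controls $\rho$ in $L^\infty_tL^\gamma_x$ and $\rho$ in $L^{\gamma+1}$ will be closed by absorption; for the $\delta\rho^{\gamma_0}$ term one tests with $\mathcal{B}[\rho^{\gamma_0}-\langle\rho^{\gamma_0}\rangle]$ analogously). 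The leading term $\int_0^T\int_{\mathbb{T}^3}(A\rho^\gamma+\delta\rho^{\gamma_0})\rho^\theta\,dx\,dt$ appears on the left; every other term must be bounded by $C_{\delta,T}$ plus a small multiple of the left-hand side.

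The key steps, in order, are: (i) write out the weak formulation obtained by testing $(\ref{twoapp})_{4}$ with $\Phi$, integrating in time over $(0,T)$, and moving the pressure term to one side; (ii) handle the time-derivative term $\int_0^T\int \rho u\cdot\partial_t\Phi\,dx\,dt$ by using the continuity equation $(\ref{twoapp})_{3}$, namely $\partial_t\Phi=\mathcal{B}[\partial_t\rho^\theta-\langle\cdot\rangle]=\mathcal{B}[-\dive(\rho^\theta u)-(\theta-1)\rho^\theta\dive u+\varepsilon\,(\text{diffusion terms})-\langle\cdot\rangle]$, so that after integration by parts this term is controlled by $\|\sqrt\rho u\|_{L^\infty_tL^2}$, $\|\rho\|_{L^\infty_tL^\gamma}$ (both from Lemma \ref{lemma31}), $\|u\|_{L^2_tH^1}$, and the artificial viscosity terms which carry $\varepsilon$ and are uniformly bounded by $(\ref{energyvar})$; (iii) bound the convective term $\int_0^T\int\rho u\otimes u:\nabla\Phi\,dx\,dt$ using $\|\nabla\Phi\|_{L^{?}}\le C\|\rho^\theta\|_{L^{?}}$ together with the Sobolev embedding $H^1\hookrightarrow L^6$ for $u$; (iv) bound the viscous terms $\mu\int\nabla u:\nabla\Phi$, $(\mu+\lambda)\int\dive u\,\dive\Phi$, the drag term $\kappa\int n(v-u)\cdot\Phi$, and the artificial terms $\varepsilon\int|u|^8u\cdot\Phi$, $\varepsilon\int\nabla u\cdot\nabla\rho\,\Phi$, each by Hölder plus the energy bounds — for the drag term one uses $\kappa\int_0^T\int n|v-u|^2\le C_T$ from $(\ref{energyvar})$ together with $\|n\|_{L^\infty_tL^1}$ and a bound on $\|\Phi\|_{L^\infty}$ via $W^{1,p}\hookrightarrow L^\infty$ for $p>3$; (v) choose $\theta$ and collect, absorbing the small multiple of $\int_0^T\int(\rho^{\gamma+1}+\delta\rho^{\gamma_0+1})$ into the left side, and invoking $\gamma>\frac32$ together with interpolation between $L^\infty_tL^\gamma_x$ and the gained integrability to close.

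The main obstacle I expect is step (ii), the treatment of $\partial_t\Phi$: one must carefully carry the artificial diffusion $\varepsilon\Delta\rho$ from $(\ref{twoapp})_{3}$ through the operator $\mathcal{B}$, and verify that the resulting terms, though not uniform in $\varepsilon$ in the worst estimates, are nonetheless bounded by a constant depending only on $\delta$ and $T$ thanks to the $\varepsilon$-weighted quantities $\varepsilon\|\nabla\rho\|_{L^2_tL^2_x}^2$, $\varepsilon\|\rho^{\gamma-2}\|_{L^1_tL^1_x}$, $\varepsilon\|u\|_{L^{10}}^{10}$ already controlled in $(\ref{energyvar})$; since the statement only asks for a bound $C_{\delta,T}$ (not uniform in $\varepsilon$), this is manageable, but bookkeeping the exponents so that every term genuinely closes — in particular ensuring the convective term does not require more than $L^{\gamma+1}$ of $\rho$ — requires the restriction $\gamma>\frac32$ and a careful choice of $\theta$. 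A secondary technical point is that $\rho$ need not have zero mean, so one must everywhere subtract spatial averages before applying $\mathcal{B}$ and control the resulting average terms by $(\ref{mass})$ and Lemma \ref{lemma31}.
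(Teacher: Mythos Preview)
Your proposal is correct and follows essentially the same approach as the paper: the paper applies $(-\Delta)^{-1}\dive$ to $(\ref{twoapp})_4$ and multiplies by $\rho$, which on the torus is exactly the dual formulation of testing with $\nabla(-\Delta)^{-1}(\rho-\langle\rho\rangle)$, the periodic-domain analogue of your Bogovskii test function with $\theta=1$. Note that a single test with $\theta=1$ already produces both $\int\rho^{\gamma+1}$ and $\delta\int\rho^{\gamma_0+1}$ on the left, so the separate test with $\mathcal{B}[\rho^{\gamma_0}-\langle\rho^{\gamma_0}\rangle]$ you mention in passing is unnecessary.
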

	
	\begin{proof}	
	Denote the operator $(-\Delta)^{-1}: W^{k-2,p}(\mathbb{T}^{d}) \rightarrow W^{k,p}(\mathbb{T}^{d})$ for $k\in\mathbb{R}$ and $p\in(1,\infty)$ by
\begin{equation}\label{deltam1}
\begin{aligned}
&(-\Delta)^{-1} f=g,\quad g~\text{is the solution of the problem}~-\Delta g=f,\quad \int_{\mathbb{T}^{3}}f=0.
\end{aligned}
\end{equation}
One obtains after applying $(-\Delta)^{-1}\dive$ to $(\ref{twoapp})_{4}$ that
\begin{equation}\label{rhoformular}
\begin{aligned}
&A\rho^{\gamma}+\delta \rho^{\gamma_{0}}-(2\mu+\lambda)\dive u\\
&\quad=(-\Delta)^{-1}\dive(\rho u)_{t}+(-\Delta)^{-1}\dive\dive(\rho u\otimes u)+\kappa (-\Delta)^{-1}\dive (n(u-v))\\
&\quad\quad+\var (-\Delta)^{-1}\dive\dive (\nabla \rho\otimes u)+\var (-\Delta)^{-1}\dive (|u|^{8}u).
\end{aligned}
\end{equation}
Then we multiply (\ref{rhoformular}) by $\rho$ and integrate the resulted equality over $\mathbb{T}^{3}\times(0,T)$ to gain
\begin{equation}\label{rhoformular11}
\begin{aligned}
&\int_{0}^{T}\int_{\mathbb{T}^3}(A\rho^{\gamma+1}+\delta \rho^{\gamma_{0}+1})dxdt\\
&=\int_{\mathbb{T}^3}\rho (-\Delta)^{-1}\dive(\rho u) dx\big|^{t=T}_{t=0}\\
&\quad+\int_{0}^{T}\int_{\mathbb{T}^3}\rho [ (-\Delta)^{-1}\dive \dive (\rho u\otimes u)-u\nabla(-\Delta)^{-1}\dive (\rho u)]dxdt\\
&\quad+\kappa \int_{0}^{T}\int_{\mathbb{T}^3} \rho (-\Delta)^{-1}\dive (n(u-v)) dxdt\\
&\quad+\var\int_{0}^{T}\int_{\mathbb{T}^3} \big( \nabla\rho \cdot \nabla (-\Delta)^{-1}\dive (\rho u)+\rho (-\Delta)^{-1}\dive\dive (\nabla \rho\otimes u) +\rho  (-\Delta)^{-1}\dive (|u|^{8}u)\big)dxdt.
\end{aligned}
\end{equation}
It can be verified by (\ref{energyvar}), (\ref{BDvar}), the Sobolev inequality and $L^{p}(\mathbb{T}^{3})$ ($p\in(1,\infty)$) boundedness of the operator $(-\Delta)^{-1} \partial_{i}\partial_{j} $ for $1\leq i,j\leq 3$ that
\begin{equation}\nonumber
\begin{aligned}
&\kappa\int_{0}^{T}\int_{\mathbb{T}^3} \rho (-\Delta)^{-1}\dive (n(u-v)) dxdt\\
&\quad\leq \|\rho \|_{L^2(0,T;L^2)} \|n(u-v)\|_{L^2(0,T;L^{\frac{6}{5}}(\mathbb{T}^3))}\\
&\quad\leq CT^{\frac{1}{2}}\|\rho \|_{L^{\infty}(0,T;L^{\gamma_{0}}(\mathbb{T}^3))} \|\sqrt{n}\|_{L^{\infty}(0,T;L^{2}(\mathbb{T}^3))}\|\sqrt{n}(u-v)\|_{L^2(0,T;L^{2}(\mathbb{T}^3))}\leq C_{\delta,T}.
\end{aligned}
\end{equation}	
Similarly, due to $\gamma_{0}>4$, one has
\begin{equation}\nonumber
\begin{aligned}
&\var\int_{0}^{T}\int_{\mathbb{T}^3} \big( \nabla\rho \cdot \nabla (-\Delta)^{-1}\dive (\rho u)+\rho (-\Delta)^{-1}\dive\dive (\nabla \rho\otimes u) +\rho  (-\Delta)^{-1}\dive (|u|^{8}u)\big)dxdt\\
&\quad\leq \var \|\nabla\rho \|_{L^2(0,T;L^2)}\|\rho u\|_{L^2(0,T;L^2)}+C\var \|\rho \|_{L^{\infty}(0,T;L^{3}(\mathbb{T}^3))}  \|\nabla \rho \|_{L^2(0,T;L^2)}\|u\|_{L^2(0,T;L^6(\mathbb{T}^3))}\\
&\quad\quad+C\var  \|\rho \|_{L^{\infty}(0,T;L^{\frac{30}{13}}(\mathbb{T}^3))} \| |u|^{9}\|_{L^1(0,T;L^{\frac{10}{9}}(\mathbb{T}^3))}\leq C_{\delta,T}.
\end{aligned}
\end{equation}	
Since the other terms on the right-hand side of (\ref{rhoformular11}) can be controlled in a standard way as \cite{lions2,feireisl3}, we omit the detail.
	\end{proof}
	
	Inspired by \cite{l1}, we take the advantage of Lemma \ref{lemma31}-\ref{lemma34} and the De Giorgi iteration to have the upper and lower bounds of two densities.
	\begin{lemma}\label{lemma35}
		For any $\var\in(0,\frac{1}{4})$, $\delta\in(0,1)$ and given time $T>0$, let $(n,v,\rho,u)$ be any strong solution to the IVP $(\ref{twoapp})$ for $t\in (0,T]$. Then, under the assumptions of Theorem \ref{theorem11}, it holds
		\begin{equation}\label{upper}
			\begin{aligned}
				&0<\frac{1}{C_{\varepsilon,T}}\leq n(x,t)\leq C_{\varepsilon,T},\quad\quad0<\frac{1}{C_{\varepsilon,T}}\leq \rho(x,t)\leq C_{\varepsilon,T},\quad (x,t)\in\mathbb{T}^3\times(0,T),
			\end{aligned}
		\end{equation}
		where $C_{\varepsilon,T}>1$ is a constant.
	\end{lemma}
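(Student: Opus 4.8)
We follow the De Giorgi iteration scheme of Li--Xin \cite{l1}; throughout $\varepsilon,\delta,T$ are fixed and every constant is allowed to depend on them. The point is that, for fixed $\varepsilon$, the mass equations $(\ref{twoapp})_{1}$ and $(\ref{twoapp})_{3}$ are uniformly parabolic, and Lemmas \ref{lemma31}--\ref{lemma34} furnish exactly the integrability of the coefficients and the decay of the super-level-set measures needed to close the iteration. As preparation, using $\sqrt n\,\Delta\sqrt n=\tfrac12\Delta n-|\nabla\sqrt n|^2$ and $\sqrt n\,\dive(|\nabla\sqrt n|^2\nabla\sqrt n)=\tfrac12\dive(|\nabla\sqrt n|^2\nabla n)-|\nabla\sqrt n|^4$, I would rewrite $(\ref{twoapp})_{1}$ as
\[
n_t+\dive(nv)-\tfrac\varepsilon2\Delta n-\tfrac\varepsilon2\dive(|\nabla\sqrt n|^2\nabla n)=-\varepsilon|\nabla\sqrt n|^2-\varepsilon|\nabla\sqrt n|^4+\varepsilon n^{-12},
\]
so that, when tested against a nonnegative truncation of $n$, the degenerate term $-\tfrac\varepsilon2\dive(|\nabla\sqrt n|^2\nabla n)$ contributes only the good dissipation $\tfrac\varepsilon2\int|\nabla\sqrt n|^2|\nabla(\cdot)|^2$ and the two quadratic terms in $\nabla\sqrt n$ have the favorable sign.

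\emph{Upper bounds.} For an increasing sequence of levels $k_j\uparrow\infty$ I would test $(\ref{twoapp})_{3}$ with $(\rho-k_j)_+$, keeping the convective term in the flux form $-\dive(\rho u)$ so that only $\rho u$ (never $\dive u$) enters; since Lemma \ref{lemma34} gives $\rho\in L^{\gamma_0+1}((0,T)\times\mathbb{T}^3)$ with $\gamma_0>\gamma+4$ (taken large) and Lemma \ref{lemma31} gives $u\in L^{10}((0,T)\times\mathbb{T}^3)$, the flux $\rho u$ lies in $L^{r}((0,T)\times\mathbb{T}^3)$ with $r$ close to $10$, which is strictly subcritical for the heat operator — this is where the high integrability of $\rho$ produced by the artificial pressure $\delta\rho^{\gamma_0}$, hence the $\delta$-dependence, is used. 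Combining the dissipation $\varepsilon\|\nabla(\rho-k_j)_+\|_{L^2}^2$, the parabolic embedding $L^\infty_tL^2_x\cap L^2_tH^1_x\hookrightarrow L^{10/3}((0,T)\times\mathbb{T}^3)$, Hölder's inequality, and the bound $|\{\rho>k_j\}|\lesssim k_j^{-(\gamma+1)}$ from $\rho\in L^{\gamma+1}$, one gets a nonlinear De Giorgi recursion for the level-energies; since $\rho_{0,\delta}\le C_\delta$ by \eqref{dapp}--\eqref{jdelta}, starting above $C_\delta$ makes the initial truncation vanish and yields $\rho\le C_{\varepsilon,\delta,T}$. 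The bound $n\le C_{\varepsilon,T}$ follows in the same way from the rewritten $(\ref{twoapp})_{1}$ with test functions $(n-k_j)_+$, $k_j\ge1$: the degenerate diffusion gives the extra dissipation $\tfrac\varepsilon2\int|\nabla\sqrt n|^2|\nabla(n-k_j)_+|^2$, the right-hand side is $\le\varepsilon$ on $\{n>1\}$ modulo good-signed terms, the flux $nv=\sqrt n\,(\sqrt n\,v)$ is controlled by $\sqrt n\,v\in L^\infty_tL^2_x$ (Lemma \ref{lemma31}) together with $\sqrt n\in L^\infty_tH^1_x\cap L^2_tH^2_x$ (Lemma \ref{lemma32}, using $H^2(\mathbb{T}^3)\hookrightarrow L^\infty$) and $n^{1/5}v\in L^5((0,T)\times\mathbb{T}^3)$, and the super-level measures come from $n\in L^\infty_tL^3_x$.

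\emph{Lower bounds.} For $n$, set $\sigma:=n^{-1}$; multiplying the rewritten $(\ref{twoapp})_{1}$ by $-n^{-2}$ and using $\nabla\sigma=-n^{-2}\nabla n$ produces a parabolic inequality of the form
\[
\sigma_t-\tfrac\varepsilon2\Delta\sigma+v\cdot\nabla\sigma-(\dive v)\,\sigma+\varepsilon\,\sigma^{14}\le(\text{degenerate-diffusion terms of favorable sign})+C\varepsilon\,\sigma^{-1}|\nabla\sigma|^2+C\varepsilon\,|\nabla\sigma|^4,
\]
in which the penalization $\varepsilon n^{-12}$ has turned into the strongly absorbing term $\varepsilon\sigma^{14}$. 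On the super-level set $\{\sigma>k_j\}=\{n<k_j^{-1}\}$ one has $\sigma^{-1}=n<k_j^{-1}$, so $C\varepsilon\,\sigma^{-1}|\nabla\sigma|^2$ is absorbed into the diffusion, while $C\varepsilon\,|\nabla\sigma|^4$ is absorbed by the degenerate dissipation together with the entropy quantities of Lemma \ref{lemma32}; since $\sigma_0=n_{0,\delta}^{-1}\le\delta^{-1/100}$ and $\sigma=n^{-1}\in L^{25}((0,T)\times\mathbb{T}^3)$ by Lemma \ref{lemma31} (so $|\{\sigma>k_j\}|\lesssim k_j^{-25}$ feeds the recursion), the De Giorgi iteration gives $\sigma\le C_{\varepsilon,T}$, i.e. $n\ge 1/C_{\varepsilon,T}$. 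For $\rho$, set $\varrho:=\rho^{-1}$; then $(\ref{twoapp})_{3}$ gives $\varrho_t-\varepsilon\Delta\varrho+u\cdot\nabla\varrho+2\varepsilon\rho|\nabla\varrho|^2=\varrho\,\dive u$, and since $\rho_{0,\delta}=\rho_0\ast j_\delta+\delta\ge\delta$ forces $\varrho_0\le1/\delta$, a De Giorgi iteration exploiting $\dive u\in L^2((0,T)\times\mathbb{T}^3)$, the parabolic smoothing of $(\ref{twoapp})_{3}$ and the already-established upper bound on $\rho$ yields $\varrho\le C_{\varepsilon,\delta,T}$, i.e. $\rho\ge1/C_{\varepsilon,\delta,T}$.

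The main obstacle I anticipate is precisely the lower bound for $n$: one must verify that, after passing to $\sigma=n^{-1}$, the nonlinear gradient terms $\sigma^{-1}|\nabla\sigma|^2$ and $|\nabla\sigma|^4$ together with the cross terms produced by differentiating the degenerate diffusion are genuinely dominated by the linear and degenerate dissipation and the Bresch--Desjardins/Mellet--Vasseur quantities of Lemmas \ref{lemma31}--\ref{lemma33}, uniformly down to the vacuum set — this is the whole reason for the delicate choice of the negative powers $-12$, $-13$, $-14$, $-25$ in the approximate system \eqref{twoapp}. A secondary technical point is that the convective terms in the upper-bound estimates are only subcritical for the heat operator after exploiting the high integrability $\rho\in L^{\gamma_0+1}$ and the smoothing $\sqrt n\in L^2_tH^2_x$; wherever the relevant norm is not small enough to be absorbed outright, one first partitions $(0,T)$ into finitely many short subintervals on each of which it is small, and then chains the resulting bounds.
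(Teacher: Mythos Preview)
Your overall De Giorgi strategy matches the paper's, but several of your technical choices differ from what the paper actually does, and in each case the paper's choice is cleaner and avoids precisely the obstacle you flag.

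\textbf{Upper bound for $n$.} No De Giorgi is needed here. Since Lemma~\ref{lemma32} gives $\varepsilon\|\nabla\sqrt n\|_{L^\infty_tL^4_x}^4\le C_T$, the paper simply uses the Sobolev embedding $W^{1,4}(\mathbb{T}^3)\hookrightarrow L^\infty(\mathbb{T}^3)$ together with $\sqrt n\in L^\infty_tL^2_x$ to get $n\le C_{\varepsilon,T}$ in one line.

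\textbf{Lower bound for $n$.} The paper sets $\vartheta:=n^{-1/2}$, not $\sigma:=n^{-1}$. This is the key move that resolves your ``main obstacle'': because the degenerate diffusion in $(\ref{twoapp})_1$ is written in terms of $\sqrt n$, the equation for $\vartheta=(\sqrt n)^{-1}$ comes out as
\[
2\vartheta_t+2v\cdot\nabla\vartheta-\vartheta\,\dive v+\varepsilon\vartheta^{27}+2\varepsilon\vartheta^{-1}|\nabla\vartheta|^2+2\varepsilon\vartheta^{-5}|\nabla\vartheta|^4=\varepsilon\Delta\vartheta+\varepsilon\,\dive(\vartheta^{-4}|\nabla\vartheta|^2\nabla\vartheta),
\]
so that testing with $(\vartheta-k)_+$ yields the clean degenerate dissipation $\varepsilon\int n^2|\nabla(\vartheta-k)_+|^4$ with no bad cross terms. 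The only surviving source is the convective piece, bounded by $\int n^{-1/2}|v|\,|\nabla(\vartheta-k)_+|$, which after Young's inequality is absorbed by the degenerate dissipation and controlled via $\varepsilon\int n|v|^5$ and $\varepsilon^2\int n^{-25}$ from Lemma~\ref{lemma31}; this gives $|\{\vartheta>h\}|\le C_{\varepsilon,T}(h-k)^{-10/3}|\{\vartheta>k\}|^{251/225}$ and the De Giorgi lemma closes. Your choice $\sigma=n^{-1}$ produces the extra gradient terms you worried about; switching to $n^{-1/2}$ eliminates them.

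\textbf{Bounds for $\rho$.} Rather than relying on the $\delta$-dependent $L^{\gamma_0+1}$ bound of Lemma~\ref{lemma34}, the paper first multiplies $(\ref{twoapp})_3$ by $p\rho^{p-1}$ and uses $u\in L^{10}$ (from Lemma~\ref{lemma31}) with Gr\"onwall to get $\rho\in L^\infty_tL^p_x$ for \emph{every} finite $p$; only then does it run De Giorgi on $(\rho-\ell)_+$, where the flux $\rho u$ is now trivially subcritical. The lower bound is handled the same way: the equation for $\rho^{-1}$ has the favorable extra term $+2\varepsilon\rho|\nabla\rho^{-1}|^2$, so one first obtains $\rho^{-1}\in L^\infty_tL^p_x$ for all $p$ and then runs De Giorgi. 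Your sketch jumps directly to De Giorgi for $\varrho=\rho^{-1}$ with only $\dive u\in L^2$; without the preliminary $L^p$ bounds on $\varrho$ the level-set recursion does not close, which is why the paper inserts that bootstrap first.
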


	\begin{proof}
		First, it holds by (\ref{energyvar}), (\ref{BDvar}) and the Sobolev inequality that
		\begin{align}
		\sup_{(x,t)\in\in\mathbb{T}^3\times(0,T)} n(x,t) \le C \sup_{t\in[0,T]} \| \sqrt{n}(t)\|_{L^2}^{\frac{2}{7}}\| \nabla \sqrt{n}(t) \|_{L^4(\mathbb{T}^3)}^{\frac{12}{7}} \le C_{\varepsilon,T}.  \label{33}
		\end{align}

		Next, the lower bound of $n$ can be shown by a De Giorgi-type procedure. It is easy to verify that $\vartheta:=n^{-\frac{1}{2}}$ satisfies
		\begin{align}
		&2\vartheta_{t}+2v\cdot\nabla\vartheta-\vartheta\dive v+\var\vartheta^{27}+2\var\vartheta^{-1}|\nabla \vartheta|^{2}+2\var\vartheta^{-5}|\nabla \vartheta|^4 =\var \Delta \vartheta+\var \dive(\vartheta^{-4}|\vartheta|^{2}\nabla\vartheta).    \label{29}
		\end{align}
		Multiplying $(\ref{29})$ by $(\vartheta-k)_{+}$ with $k \geq\|n_{0, \delta}^{-\frac{1}{2}}\|_{L^{\infty}(\mathbb{T}^3)}$ and integrating the resulted equation over $\mathbb{T}^3\times(0,t)$, we have
		\begin{equation}\label{3.32}
		\begin{aligned}
		&\sup_{t\in[0,T]}\int_{\mathbb{T}^3} |(\vartheta-k)_{+}|^{2}dx+\var\int_{0}^{T}\int_{\mathbb{T}^3} (|\nabla (\vartheta-k)_{+}|^{2}+ n^2 |\nabla (\vartheta-k)_{+}|^{4})dx \\\
		&\quad\leq3 \int_{0}^{T}\int_{\mathbb{T}^3} (\vartheta-k)_{+} |v| |\nabla\vartheta |dxdt+\int_{0}^{T}\int_{\mathbb{T}^3} \vartheta |v| |\nabla(\vartheta-k)_{+}|dxdt\\
		&\quad\leq 4\int_{0}^{T}\int_{\mathbb{T}^3} n^{-\frac{1}{2}} |v| |\nabla(\vartheta-k)_{+}|dxdt\\
		&\quad\leq \frac{\var}{2} \int_{0}^{T}\int_{\mathbb{T}^3}n^2 |\nabla (\vartheta-k)_{+}|^{4}dxdt+\frac{C}{\var}\int_{0}^{T}\int_{\mathbb{T}^3} n^{-\frac{4}{3}}|v|^{\frac{4}{3}} \mathbb{I}_{\vartheta>k}dxdt,
                \end{aligned}
		\end{equation}
		where one has used $(\vartheta-k)_{+}|_{t=0}=0$ and $(\vartheta-k)_{+}\leq \vartheta$. It follows from (\ref{energyvar}) that
		\begin{equation}\nonumber
		\begin{aligned}
		&\int_{0}^{T}\int_{\mathbb{T}^3} n^{-\frac{4}{3}}|v|^{\frac{4}{3}} \mathbb{I}_{\vartheta>k}dxdt\\
		&\quad\leq \big( \int_{0}^{T}\int_{\mathbb{T}^3} n|v|^{5}dxdt\big)^{\frac{4}{15}} \big( \int_{0}^{T}\int_{\mathbb{T}^3} n^{-\frac{24}{11}}\mathbb{I}_{\vartheta>k}dx\big)^{\frac{11}{15}} \\
		&\quad\leq T^{\frac{4}{5}} \big( \int_{0}^{T}\int_{\mathbb{T}^3} n|v|^{5}dxdt\big)^{\frac{4}{15}}  \big( \int_{0}^{T}\int_{\mathbb{T}^3} n^{-25}dx\big)^{\frac{8}{125}}  T^{\frac{251}{375}} |\vartheta>k|^{\frac{251}{375}}\leq C_{\var,T}  |\vartheta>k|^{\frac{251}{375}}.
		\end{aligned}
		\end{equation}
	      This together with $(\ref{3.32})$ yields
		\begin{align}
		&\sup_{t\in[0,T]}\int_{\mathbb{T}^3} (\vartheta-k)^{2}_{+}dx+\var\int_{0}^{T}\int_{\mathbb{T}^3} (|\nabla (\vartheta-k)_{+}|^{2}+n^2 |\nabla (\vartheta-k)_{+}|^{4})dxdt \leq C_{\var, T} |\vartheta>k|^{\frac{251}{375}}.\label{30}
		\end{align}
		On the other hand, using the Gagliardo-Nirenberg inequality, we deduce for any $h>k$ that
		\begin{equation}\label{31}
		\begin{aligned}
		&|\vartheta>h|\leq (h-k)^{-\frac{10}{3}} \|(\vartheta-k)_{+}\|_{L^{\frac{10}{3}}(0,T;L^{\frac{10}{3}})}^{\frac{10}{3}}\\
		&\quad\quad\quad\leq C (h-k)^{-\frac{10}{3}} \big(\sup_{t\in[0,T]}\int_{\mathbb{T}^3} |(\vartheta-k)_{+}|^2dx\big)^{\frac{4}{3}} \big(\int_{0}^{T} \int_{\mathbb{T}^3}|\nabla (\vartheta-k)_{+}|^{2}dxdt\big)^{\frac{1}{3}}.
		\end{aligned}
		\end{equation}
		Combining (\ref{30})-(\ref{31}) together, we get
		\begin{equation}\nonumber
		\begin{aligned}
		&|\vartheta>h|\leq C_{\var}(h-k)^{-\frac{10}{3}}|\vartheta>k|^{\frac{251}{225}},\quad\quad h>k.
		\end{aligned}
		\end{equation}
		Thus, by virtue of the De Giorgi lemma (see, e.g. \cite[Lemma 4.1.1]{wuz1}), there is a constant $C_{\var}>0$ such that
		\begin{align}
		\vartheta(x,t)\le C_{\var, T}, \quad\quad (x,t)\in\mathbb{T}^3\times(0,T),\nonumber
		\end{align}
         the lower bound of $n$ follows.

Furthermore, we turn to show the upper bound of $\rho$. By the Sobolev inequality, one obtains after multiplying $(\ref{twoapp})_{3}$ by $p\rho^{p-1}$ for $p\in[2,\infty)$ that
\begin{equation}\nonumber
\begin{aligned}
&\frac{d}{dt}\int_{\mathbb{T}^3}\rho^{p}dx+\var p (p-1)\int_{\mathbb{T}^3} \rho^{p-2}|\nabla \rho|^2dx\\
&\leq p(p-1)\|u\|_{L^{10}} \|\rho^{\frac{p}{2}-1}\nabla \rho\|_{L^2} \|\rho^{\frac{p}{2}}\|_{L^{\frac{5}{2}}}\\
&\leq p(p-1)\|u\|_{L^{10}}  \|\rho^{\frac{p}{2}-1}\nabla \rho\|_{L^2} (\|\rho^{\frac{p}{2}}\|_{L^{2}}+\|\rho^{\frac{p}{2}}\|_{L^{2}}^{\frac{7}{10}}\|\nabla \rho^{\frac{p}{2}}\|_{L^2}^{\frac{3}{10}})\\
&\leq \frac{\var p (p-1)}{2}\int_{\mathbb{T}^3} \rho^{p-2}|\nabla \rho|^2dx+C p(1+\|u\|_{L^{10}}^{10})\int_{\mathbb{T}^3}\rho^{p}dx,
\end{aligned}
\end{equation}
from which and (\ref{energyvar}) we deduce
\begin{equation}\label{Cprho}
\begin{aligned}
&\sup_{t\in[0,T]}\int_{\mathbb{T}^3}\rho^{p}dx\leq C_{p},\quad\quad p\in[2,\infty),
\end{aligned}
\end{equation}
where $C_{p}$ is a constant dependent of $\var$, $T$ and $p$. Thence we take the $L^2(\mathbb{T}^3)$ inner product of $(\ref{twoapp})_{3}$ with $(\rho-\ell)_{+}$ for $\ell \ge \| \rho_{0,\delta} \|_{L^{\infty}}$ to have
	\begin{equation}\label{34}
		\begin{aligned}
			&\sup_{t\in[0,T]}\int_{\mathbb{T}^3} |(\rho-\ell)_{+}|^{2}dx+\var\int_{0}^{T}\int_{\mathbb{T}^3} |\nabla (\rho-\ell)_{+}|^{2}dxdt \\
			&\quad\le \int_{0}^{T}\int_{\mathbb{T}^3} \rho u \cdot \nabla (\rho-\ell)_{+}dxdt   \\
			&\quad\le \frac{1}{\var}\int_{0}^{T}\int_{\mathbb{T}^3} \mathbb{I}_{\rho>\ell} \rho^{2}|u|^{2}dxdt+\frac{\var}{4}\int_{\mathbb{T}^3} |\nabla (\rho-\ell)_{+}|^{2}dxdt.
			\end{aligned}
		\end{equation}
		Due to (\ref{energyvar}) and $(\ref{Cprho})$, the first term on the right-hand side of $(\ref{34})$ can be estimated by
	\begin{equation}
		\begin{aligned}
			\int_{0}^{T}\int_{\mathbb{T}^3}\mathbb{I}_{\rho>\ell}\rho^{2}|u|^{2}dxdt &\le(\int_{0}^{T}\int_{\mathbb{T}^3} \rho^{50}dxdt)^{\frac{1}{25}}(\int_{0}^{T}\int_{\mathbb{T}^3}\mathbb{I}_{\rho>\ell}dxdt )^{\frac{19}{25}}(\int_{0}^{T}\int_{\mathbb{T}^3}  |u|^{10}dxdt)^{\frac{1}{5}}   \notag \\  
			& \le C_{\var,T}|\rho>\ell|^{\frac{19}{25}}. \notag 
			\end{aligned}
		\end{equation}
This implies for any $m>\ell$ that
	\begin{equation}\nonumber
		\begin{aligned}
			|\rho>m|&\leq (m-\ell)^{-\frac{10}{3}} \|(\rho-\ell)_{+}\|_{L^{\frac{10}{3}}(0,T;L^{\frac{10}{3}})}^{\frac{10}{3}}\\
		&\leq T^{\frac{2}{3}}(m-\ell)^{-\frac{10}{3}} \big(\sup_{t\in[0,T]}\int_{\mathbb{T}^3} (\rho-\ell)^{2}_{+}dx\big)^{\frac{4}{3}} \big(\int_{0}^{T} \int_{\mathbb{T}^3}|\nabla (\rho-\ell)_{+}|^{2}dxdt\big)^{\frac{1}{3}}\\
		&\leq C_{\var,T}(m-\ell)^{-\frac{10}{3}} |\rho>\ell|^{\frac{19}{15}}.
			\end{aligned}
		\end{equation}
		Thus, the De Giorgi lemma implies the upper bound of $\rho$.

		Finally, it is easy to verify that $\rho^{-1}$ satisfies
		\begin{align}
			(\rho^{-1})_{t}-\dive(\rho^{-1}u)+2u\cdot\nabla \rho^{-1}+2\var \rho|\nabla \rho^{-1}|^{2}=\var \Delta  \rho^{-1}.  \label{38}
		\end{align}
		Note that the negative term $2\var\rho|\nabla \rho^{-1}|^2$ in (\ref{38}) don't influence the energy estimates, and therefore we are able to estimate the $L^{\infty}(0,T;L^{p}(\mathbb{T}^3))$ of $\rho^{-1}$ for any $p\in[2,\infty)$ and employ a similar De Giorgi-type procedure to derive the lower bound of $\rho$. The proof of Lemma \ref{lemma35} is completed.
	
	\end{proof}

	\begin{lemma}\label{lemma36}
		For any $\var\in(0,\frac{1}{4})$, $\delta\in(0,1)$ and given time $T>0$, let $(n,v,\rho,u)$ be any strong solution to the IVP $(\ref{twoapp})$ for $t\in (0,T]$. Then, under the assumptions of Theorem \ref{theorem11}, it holds
		\begin{equation}\label{high}
			\begin{aligned}
				&\sup_{t\in[0,T]}\|(n,v,\rho, u)(t)\|_{H^2}+\int_{0}^{T}\big(\|(n,v,\rho,u)(t)\|_{H^3}^2+\|(n_{t},v_{t},\rho_{t},u_{t})(t)\|_{H^1}^2 \big)dt\leq C_{\varepsilon,T},
			\end{aligned}
		\end{equation}
		where $C_{\varepsilon,T}>0$ is a constant.
	\end{lemma}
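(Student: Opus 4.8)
The plan is to establish \eqref{high} by a bootstrap built on top of Lemmas \ref{lemma31}--\ref{lemma35}. The decisive fact is \eqref{upper}: both $n$ and $\rho$ are bounded from above and away from zero by positive constants (depending on $\var$, $\delta$, $T$), so after substituting these pointwise bounds each of the four equations in \eqref{twoapp} becomes a (quasi)linear, uniformly parabolic equation for the corresponding unknown, with ellipticity constants that may degenerate as $\var\to 0$ but are fixed here. Since no uniformity in $\var$ or $\delta$ is asserted in \eqref{high}, all constants below are allowed to depend on $\var,\delta,T$, and standard parabolic regularity theory applies.

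First I would upgrade the energy-level bounds to first spatial derivatives and to $L^2(0,T;L^2)$ bounds on the time derivatives. For $\rho$, the equation $\rho_t+\dive(\rho u)=\var\Delta\rho$, combined with $u\in L^2(0,T;H^1)$ from \eqref{energyvar} and the pointwise bounds on $\rho$, gives $\rho\in L^\infty(0,T;H^1)\cap L^2(0,T;H^2)$ and $\rho_t\in L^2(0,T;L^2)$ by maximal $L^2$-regularity (or by testing against $-\Delta\rho$). For $u$, I would test the Lamé-type equation $(\ref{twoapp})_4$ with $u_t$ and control the terms $\nabla(A\rho^\gamma+\delta\rho^{\gamma_0})$, $\var|u|^8u$, $\kappa n(v-u)$, $\rho u\cdot\nabla u$ and $\var\nabla u\cdot\nabla\rho$ using the density bounds, the bound $\nabla\rho\in L^2(0,T;L^2)$ just obtained, the Sobolev embedding $H^1\hookrightarrow L^6$, and the controls on $\sqrt n(v-u)$, $\sqrt\rho\,u$, $|u|^{10}$ from \eqref{energyvar}; this yields $\sqrt\rho\,u_t\in L^2(0,T;L^2)$, $\nabla u\in L^\infty(0,T;L^2)$ and, via elliptic regularity for the steady Lamé operator, $u\in L^2(0,T;H^2)$. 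For $v$, dividing $(\ref{twoapp})_2$ by $n$ (legitimate by \eqref{upper}) produces a uniformly parabolic equation; testing with $v_t$ and using $\nabla\sqrt n\in L^\infty(0,T;L^4)$, $\nabla^2\sqrt n\in L^2(0,T;L^2)$ from \eqref{BDvar} to handle the cubic term $\var\sqrt n|\nabla\sqrt n|^2\nabla\sqrt n\cdot\nabla v$ gives $v_t\in L^2(0,T;L^2)$, $\nabla v\in L^\infty(0,T;L^2)$, $v\in L^2(0,T;H^2)$. For $n$, I would work with the $\sqrt n$-equation \eqref{211}, whose principal part $\var\Delta\sqrt n+\var\dive(|\nabla\sqrt n|^2\nabla\sqrt n)$ is uniformly elliptic thanks to the $\var\Delta$ term; \eqref{BDvar} already delivers $\sqrt n\in L^\infty(0,T;W^{1,4})\cap L^2(0,T;H^2)$, and testing the equation (and its spatial derivative) against the appropriate multiplier supplies the matching $(\sqrt n)_t\in L^2(0,T;L^2)$ bound, hence $n_t\in L^2(0,T;L^2)$.

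Next I would differentiate each equation once more in space and run the analogous energy estimates, now closing them with the first-order bounds just obtained together with the three-dimensional Gagliardo--Nirenberg inequalities. Every term not linear in the top-order derivative (hence absorbable into the dissipation) is a product of lower-order factors controlled in $L^\infty_tL^\infty_x$ (the densities, by \eqref{upper}), in $L^\infty_tL^6_x$ or $L^2_tL^6_x$ (first derivatives, by Sobolev), or in the weighted norms of Lemma \ref{lemma32}; Gr\"{o}nwall's inequality then yields $\sup_{[0,T]}\|(n,v,\rho,u)\|_{H^2}+\int_0^T\|(n,v,\rho,u)\|_{H^3}^2\,dt\le C_{\var,T}$, after which the bounds $\|(n_t,v_t,\rho_t,u_t)\|_{L^2(0,T;H^1)}\le C_{\var,T}$ are read off directly from \eqref{twoapp}.

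The main obstacle will be the degenerate, cubic-in-gradient nonlinearities $\var\dive(|\nabla\sqrt n|^2\nabla\sqrt n)$ in $(\ref{twoapp})_1$ and $\var\sqrt n|\nabla\sqrt n|^2\nabla\sqrt n\cdot\nabla v$ in $(\ref{twoapp})_2$: estimating $\nabla^2\sqrt n$ and $\nabla^3\sqrt n$ in their presence forces one to control $\nabla\sqrt n$ in high $L^p$ norms, and in 3D this is precisely where the weighted $L^\infty_tL^4_x$ Hessian-type bounds of \eqref{BDvar} and the lower bound on $n$ from \eqref{upper} must be used through careful interpolation. A secondary difficulty is the drag coupling $\kappa n(v-u)$, which prevents estimating $v$ and $u$ separately at the $H^2$/$H^3$ level; this is handled by carrying the $v$- and $u$-estimates simultaneously, adding the two (differentiated) energy identities so that the highest-order cross contribution combines into a term of favourable sign of the form $-\kappa\int n|\nabla(v-u)|^2$ plus lower-order remainders. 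Since all constants are $\var,\delta,T$-dependent, the scheme closes, which establishes \eqref{high} and, by the standard continuation argument, completes the proof of Proposition \ref{appwell}.
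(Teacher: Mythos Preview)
Your overall architecture---bootstrap on top of Lemmas \ref{lemma31}--\ref{lemma35}, using the pointwise bounds \eqref{upper} to render each equation uniformly parabolic---matches the paper's. For the $(\rho,u)$ block your sketch is adequate. The gap is in the $(n,v)$ block, precisely at the obstacle you flagged.

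The cubic term $\var\sqrt{n}\,|\nabla\sqrt{n}|^{2}\nabla\sqrt{n}\cdot\nabla v$ in $(\ref{twoapp})_{2}$ forces high $L^{p}$ integrability of $\nabla\sqrt{n}$ before one can close even the $H^{1}$-level estimate for $v$. Testing the $v$-equation against $-\Delta v$ and using Gagliardo--Nirenberg, this term produces a factor $\| |\nabla\sqrt{n}|^{3}\|_{L^{q}}^{\,2q/(q-3)}$ multiplying $\|\nabla v\|_{L^{2}}^{2}$ in the Gr\"onwall inequality, i.e.\ one needs $\nabla\sqrt{n}\in L^{r}_{t}L^{s}_{x}$ with $\tfrac{1}{r}+\tfrac{3}{2s}=\tfrac{1}{6}$. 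The ($\var$-dependent) BD bounds \eqref{BDvar} give at best $\nabla\sqrt{n}\in L^{\infty}_{t}L^{4}_{x}\cap L^{4}_{t}L^{12}_{x}\cap L^{6}_{t}L^{18}_{x}$, all of which satisfy $\tfrac{1}{r}+\tfrac{3}{2s}\ge \tfrac{1}{4}>\tfrac{1}{6}$; no interpolation among them reaches the required line. So ``careful interpolation'' of \eqref{BDvar} cannot close the loop.

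The paper resolves this by a different mechanism: it rewrites the $\sqrt{n}$-equation \eqref{211} in divergence form via an auxiliary potential $W$ (see \eqref{43}--\eqref{44}) and then invokes the nonlinear Calder\'on--Zygmund gradient estimates of Acerbi--Mingione \cite{ac1} for parabolic $p$-Laplacian systems. This yields the crucial transfer estimate \eqref{46},
\[
\int_{0}^{T}\|\nabla\sqrt{n}\|_{L^{3p}}^{3p}\,dt \le C_{p}+C_{p}\Big(\int_{0}^{T}\|v\|_{L^{p}}^{p}\,dt\Big)^{2},\qquad p\in(1,\infty),
\]
so that $L^{p}$ control of $v$ (already available from \eqref{energyvar} and \eqref{upper}) upgrades to $L^{3p}$ control of $\nabla\sqrt{n}$. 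With \eqref{46} in hand the $-\Delta v$ test closes (see \eqref{49}), and an iteration through \eqref{3.49}--\eqref{3.53} then feeds back into the parabolic $L^{p}$ theory for both \eqref{3.44} and \eqref{47} to reach $H^{2}$/$H^{3}$. Your proposal is missing this nonlinear $L^{p}$ ingredient; a purely energy-based scheme would need an alternative source of high $L^{p}$ bounds on $\nabla\sqrt{n}$, and none is indicated. The drag coupling you worried about is, by contrast, harmless once \eqref{upper} is in force, since $\kappa n(v-u)$ is then a zero-order term with bounded coefficient.
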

	
	\begin{proof}
		First, it is easy to verify that
		\begin{align}
		2(\sqrt{n})_{t}-\var \dive((1+|\nabla \sqrt{n}|^{2})\nabla \sqrt{n})=-\dive(\sqrt{n} v+\nabla W)-\frac{1}{|\mathbb{T}^3|}\int_{\mathbb{T}^3} (v \cdot \nabla \sqrt{n}-\var \sqrt{n}^{-25})dx,   \label{43}
		\end{align}
		where $W(\cdot,t)$ for $t>0$ is the unique solution of the following elliptic problem:
		\begin{equation}
		\Delta W=v \cdot \nabla \sqrt{n}-\var \sqrt{n}^{-25}-\frac{1}{|\mathbb{T}^3|}\int_{\mathbb{T}^3} (v \nabla \sqrt{n}-\var \sqrt{n}^{-25})dx, \  \   \ \int_{\mathbb{T}^3} W dx=0. \label{44}
		\end{equation}
	    Applying \eqref{energyvar}, \eqref{BDvar},  $(\ref{upper})$ and $L^{p}$-estimates of the elliptic equation $(\ref{44})$(cf.\cite{wuz1}) that
		\begin{equation}\label{45}
			\begin{aligned}
		\|\nabla W (t)\|_{L^{p}} &\le \|\nabla^{2} W(t) \|_{L^{\frac{3p}{3+p}}(\mathbb{T}^3)}  \\
		&\le C_{p}\|v(t)\|_{L^{p}} \|\nabla \sqrt{n}(t)\|^{\frac{1}{3}}_{L^{2}}\|\nabla \sqrt{n}(t)\|^{\frac{2}{3}}_{L^{4}}+C_{p}   \le C_{p} \|v(t)\|_{L^{p}}, \quad t>0,\quad  p \in(1, \infty).  
		\end{aligned}
		\end{equation}
		Setting 
		$$
		W^{*}:=\sqrt{n}+\frac{1}{2|\mathbb{T}^3|}\int_{0}^{T}\int_{\mathbb{T}^3} (v\cdot\nabla W^{*}-\var \sqrt{n}^{-25})dxdt,$$
		 we rewrite $(\ref{43})$ by
		\begin{align}
		2W^{*}_{t}-\var \dive(|\nabla W^{*}|^{2}|\nabla W^{*}|)=\dive (\var \nabla W{*} -\sqrt{n}v-\nabla W ). \label{3.44}
		\end{align}
		One derives from \eqref{energyvar}, \eqref{BDvar},  $(\ref{upper})$, $(\ref{45})$ and $L^{p}$-estimates of the parabolic equation $(\ref{3.44})$ (cf.\cite{ac1}) that
		\begin{equation}
			\begin{aligned}
		\int_{0}^{T} \| \nabla \sqrt{n}(t) \|^{3p}_{L^{3p}}dt&=\int_{0}^{T} \| \nabla W^{*}(t) \|^{3p}_{L^{3p}}dt  \notag \\
		&\le C_{p}+C_{p}(\int_{0}^{T} \|v(t)\|^{p}_{L^p}dt)^{2}+\frac{1}{2}\int_{0}^{T} \| \nabla\sqrt{n}(t) \|^{3p}_{L^{3p}}dt,  \ \  \  p\in (1, \infty),  \notag
		\end{aligned}
		\end{equation}
	which implies 
	\begin{equation}
		\int_{0}^{T} \| \nabla \sqrt{n}(t)\|^{3p}_{L^{3p}}dt \le C_{p}+C_{p}(\int_{0}^{T} \|v(t)\|^{p}_{L^p}dt)^{2}, \ \ \ p\in(1, \infty).  \label{46}
	\end{equation}

		Next, we show the high-order estimates of $v$. Note that the equation $(\ref{twoapp})_{2}$ can be rewrite as
		\begin{equation}
		\begin{aligned}
		&v_{t}-(\frac{\eta}{2}+\sqrt{\var})\Delta v-\frac{\eta}{2}\nabla \dive v+\var|v|^3v+\kappa v\\
		&\quad=-v \cdot\nabla v-n^{-1} \nabla n +\kappa u-\var n^{-13}v+\var n^{-\frac{1}{2}}|\nabla\sqrt{n}|^2\nabla\sqrt{n} \cdot \nabla v.   \label{47}
		\end{aligned}
		\end{equation}
		Multiplying $(\ref{47})$ by $-2\Delta v$ and integrating the resulted equation over $\mathbb{T}^3$, we get by \eqref{energyvar} and \eqref{upper} that
		\begin{equation} \label{49}
			\begin{aligned}
		&\frac{d}{dt}\int_{\mathbb{T}^3} |\nabla v|^{2} dx+\int_{\mathbb{T}^3} ((\eta+2\sqrt{\var})|\Delta v|^2+\eta |\nabla \dive v|^2+2\var   |v|^3|\nabla v|^2+6\var |v|^3|\nabla|v\|^2+2\kappa |\nabla v|^2)dx\\
		&\le \|\Delta v\|_{L^2}(\|v\|_{L^{5}}\|\nabla v\|^{\frac{2}{5}}_{L^2}\|\Delta v\|^{\frac{3}{5}}_{L^2}+\|\nabla n\|_{L^2} +\|u\|_{L^2}+\|v\|_{L^2}  +\| |\nabla\sqrt{n}|^3 \|_{L^{5}}\|\nabla v\|^{\frac{2}{5}}_{L^{2}}\|\Delta   v\|^{\frac{3}{5}}_{L^2})   \\
		&\le \frac{\eta}{2}\|\Delta v\|^2_{L^2}+C(\|v\|_{L^{5}}^{5}+\| |\nabla\sqrt{n}|^3 \|_{L^{5}}^{5})\|\nabla v\|^2_{L^2}+C_{\var, T}, 
		\end{aligned}
	\end{equation}
		which together with $(\ref{46})$, $(\ref{49})$, the Gr${\rm\ddot{o}}$nwall inequality and the Sobolev inequality gives rise to
		\begin{equation}  \label{3.49}
		\begin{aligned}
		&\sup_{t\in[0,T]} \| \nabla v(t)\|_{L^2}+\|\nabla^2v\|_{L^2(0,T;L^2)}+\|v\|_{L^{10}(0,T;L^{10})}+\|\nabla v\|_{L^{\frac{10}{3}}(0,T;L^{\frac{10}{3}})}\\
		&\quad\quad+\|\nabla\sqrt{n}\|_{L^{30}(0,T;L^{30})}\le C_{\var, T}.  
	 \end{aligned}
	\end{equation}
		The Sobolev inequality (cf. \cite{la1}), \eqref{46} and $(\ref{3.49})$ thus imply that
		\begin{align}
		\| \nabla v \|_{L^{5}(0,T;L^{5})}  +\| v \|_{L^{q}(0,T;L^{q})} +\| \nabla\sqrt{n} \|_{L^{3q}(0,T;L^{3q})} \le C_{\var, T},\quad q\in[2,\infty), \label{3.51}
		\end{align}
	   and therefore it holds by \eqref{3.51}, $L^{p}$ estimates for the parabolic equations \eqref{3.44} and $(\ref{47})$ and the Sobolev inequality (cf. \cite{la1}) that
		\begin{equation}\label{3.53}
		\begin{aligned}
		&\|v\|_{L^{\infty}(0,T;L^{\infty})}+\| (\nabla^2 \sqrt{n},\nabla^2v)\|_{L^{4}(0,T;L^{4})}  \\
		&\quad\le C_{\var, T}(1+\|v\|_{L^{20}(0,T;L^{20})} \|\nabla v\|_{L^{5}(0,T;L^5)}+\|\nabla \sqrt{n}\|_{L^{4}(0,T;L^{4})}\\
		&\quad\quad +\|(v,u)\|_{L^{4}(0,T;L^{4})}+ \|\nabla\sqrt{n}|^3\|_{L^{4}(0,T;L^{4})}) \le C_{\var, T}
		\end{aligned}
	\end{equation}
	With the help of \eqref{energyvar}, \eqref{BDvar}, \eqref{upper} and \eqref{3.49}-\eqref{3.53}, we employ $L^{p}$ estimates for the parabolic equations \eqref{3.44} and $(\ref{47})$ again to gain
	 		\begin{equation}\nonumber
		\begin{aligned}
		&\sup_{t\in[0,T]} \|(n,v)(t)\|_{H^2}+\|(n,v)\|_{L^2(0,T;H^3)}+\|(n_{t},v_{t})\|_{L^2(0,T;H^1)}\\
		&\quad\leq C_{\var,T}(1+\|(v,u)\|_{L^2(0,T;H^1)}+\|\nabla\sqrt{n}\|_{L^2(0,T;H^1)}+\| |\nabla\sqrt{n}|^3\nabla v\|_{L^{2}(0,T;H^1)} )\leq C_{\var,T},
			\end{aligned}
	\end{equation}
	where we have used the fact
	 		\begin{equation}\nonumber
		\begin{aligned}
		&\| |\nabla\sqrt{n}|^3\nabla v\|_{L^{2}(0,T;H^1)} \\
		&\quad\leq \| |\nabla\sqrt{n}|^3\nabla v\|_{L^{2}(0,T;L^2(\mathbb{T}^3))}+\| |\nabla\sqrt{n}|^3\nabla^2 v\|_{L^{2}(0,T;L^2(\mathbb{T}^3))}+\|\nabla\sqrt{n}|^2\nabla^2\sqrt{n}:\nabla v\|_{L^{2}(0,T;L^2(\mathbb{T}^3))} )\\
		&\quad\leq \| |\nabla\sqrt{n}|^3\|_{L^4(0,T;L^4(\mathbb{T}^3))}(\|\nabla v\|_{L^{4}(0,T;L^{4})}+ \|\nabla^2 v\|_{L^{4}(0,T;L^{4})} )\\
		&\quad\quad+\| |\nabla\sqrt{n}|^2\|_{L^{20}(0,T;L^{20}(\mathbb{T}^3))}\|\nabla^2\sqrt{n}\|_{L^{4}(0,T;L^4(\mathbb{T}^3))}\|\nabla v\|_{L^5(0,T;L^5(\mathbb{T}^3))}\leq C_{\var,T}.
			\end{aligned}
	\end{equation}
By similar computations, one can show the expected high-order estimates of $(\rho,u)$. The proof of Lemma \ref{lemma36} is completed.
	\end{proof}

	\section{Vanishing artificial viscosities}\label{section4}
	
	In this section,  for $\delta\in(0,1)$ and $\gamma_{0}>\max\{\gamma+4\}$, we turn to consider the following IVP:
	\begin{equation}\label{twodelta}
		\left\{
		\begin{aligned}
			&n_{t}+\dive (nv)=0,\\
			&(nv)_{t}+\dive (nv\otimes v)+\nabla n= -\kappa n(v-u)+\eta \dive (n\mathbb{D} (v)),\\
			&\rho_{t}+\dive(\rho u)=0, \\
			&(\rho u )_{t}+\dive(\rho  u\otimes u)+\nabla(A\rho^{\gamma}+\delta \rho^{\gamma_{0}})=\kappa n(v-u)+\mu\Delta u+(\mu+\lambda)\nabla\dive u,\quad x\in\mathbb{T}^{3},\quad t>0,\\
		&(n,v,\rho, u)(x,0)=(n_{0,\delta},v_{0,\delta},\rho_{0,\delta},u_{0,\delta})(x),\quad x\in\mathbb{T}^{3}.
		\end{aligned}
		\right.
	\end{equation}
	where $(n_{0,\delta},v_{0,\delta},\rho_{0,\delta},u_{0,\delta})$ is given by \eqref{dapp}.
	
	We have the global existence of weak solutions to the IVP \eqref{twodelta} below.
	\begin{prop}\label{weakdelta}
		Let $\delta\in(0,1)$, and the assumptions of Theorem \ref{theorem11} hold. Then there exists a global weak solution $(n, nv,\rho,\rho u)$ to the IVP $(\ref{twodelta})$ with $n,\rho\geq0$ satisfying the following properties for any $T>0$$:$
		\begin{itemize}
			\item 
			The conservation laws of momentum and mass hold:
			\begin{align}
				&\int_{\mathbb{T}^3} ndx=\int_{\mathbb{T}^3}n_{0,\delta}dx,\quad\quad~ \int_{\mathbb{T}^3} \rho dx=\int_{\mathbb{T}^3} \rho_{0,\delta}dx,\quad t\in(0,T), \label{massdelta}\\
				&\int_{\mathbb{T}^3} (n v+\rho u )dx=\int_{\mathbb{T}^3}(n_{0,\delta}v_{0,\delta}+\rho_{0,\delta}u_{0,\delta})dx,\quad \quad\quad t\in(0,T).\label{momentumdelta}
			\end{align}
			
			\item{}The energy inequality holds: 
			\begin{equation}\label{energydelta}
				\begin{aligned}
					&\underset{t\in[0, T]}{{\rm{ess~sup}}}~\int_{\mathbb{T}^3}(\frac{1}{2}n|v|^2+n\log{n}-n+1+\frac{1}{2}\rho |u|^2+\frac{A\rho^{\gamma}}{\gamma-1}+\frac{\delta \rho^{\gamma_{0}}}{\gamma_{0}-1})dx\\
					&\quad\quad+\int_{0}^{t}\int_{\mathbb{T}^3} (\kappa n|v-u|^2+\mu |\nabla u|^2+(\mu+\lambda)(\dive u)^2 )dxd\tau\leq E_{0,\delta},
				\end{aligned}
			\end{equation}
			where $E_{0,\delta}$ is given by $(\ref{E0delta})$.
			
			\item Bresch-Desjardins type entropy inequality holds:
			
			There exists a constant $C>0$ uniformly in $\delta$ and $T$ such that
				\begin{align}
				&\sup_{t\in[0,T]}\int_{\mathbb{T}^3} |\nabla \sqrt{n}|^2dx+\int_{0}^{T}\int_{\mathbb{T}^3} |\nabla \sqrt{n}|^2dxdt  \le C,    \label{BDuniformintime}
			\end{align}

			\item{ }The Mellet-Vasseur type estimate follows: 
			
			There exists a constant $C_{T}>0$ independent of $\delta$ such that
			\begin{equation}\label{melletdelta}
				\begin{aligned}
					&\underset{t\in[0, T]}{{\rm{ess~sup}}}~\int_{\mathbb{T}^3} n(1+|v|^2)\log{(1+|v|^2)}dx\leq C_{T}.
				\end{aligned}
			\end{equation}
		\end{itemize}
	\end{prop}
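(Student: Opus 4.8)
The plan is to pass to the limit $\var\to0$ in the approximate solutions $(n_{\var},v_{\var},\rho_{\var},u_{\var})$ produced by Proposition \ref{appwell}, exploiting the $\var$-uniform estimates of Lemmas \ref{lemma31}--\ref{lemma34}. First I would record the bounds independent of $\var$ (some still depending on $\delta$ and $T$): from \eqref{energyvar} and \eqref{BDvar}, $n_{\var}$ is bounded in $L^{\infty}(0,T;L^{3}(\mathbb{T}^3))$ with $\nabla\sqrt{n_{\var}}$ bounded in $L^{\infty}(0,T;L^{2})\cap L^{2}((0,T)\times\mathbb{T}^3)$, while $\sqrt{n_{\var}}v_{\var}$ is bounded in $L^{\infty}(0,T;L^{2})$ and $\sqrt{n_{\var}}\nabla v_{\var},\ \sqrt{n_{\var}}\,\mathbb{A}(v_{\var})$ in $L^{2}((0,T)\times\mathbb{T}^3)$; from \eqref{melletvar}, $n_{\var}(1+|v_{\var}|^{2})\log(1+|v_{\var}|^{2})$ is bounded in $L^{\infty}(0,T;L^{1})$; from \eqref{energyvar} and \eqref{gamma1}, $\rho_{\var}$ is bounded in $L^{\infty}(0,T;L^{\gamma_{0}})\cap L^{\gamma_{0}+1}((0,T)\times\mathbb{T}^3)$, $u_{\var}$ in $L^{2}(0,T;H^{1})$ and $\sqrt{\rho_{\var}}u_{\var}$ in $L^{\infty}(0,T;L^{2})$. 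A routine but careful bookkeeping --- extracting an extra power of $\var$ out of the dissipative controls in \eqref{energyvar}--\eqref{BDvar} by H\"older's inequality --- shows that every penalisation term in \eqref{twoapp}, namely $\var\sqrt{n}\Delta\sqrt{n}$, $\var\sqrt{n}\dive(|\nabla\sqrt{n}|^{2}\nabla\sqrt{n})$, $\var n^{-12}$, $\sqrt{\var}\dive(n\nabla v)$, $\var n|v|^{3}v$, $\var\sqrt{n}|\nabla\sqrt{n}|^{2}\nabla\sqrt{n}\cdot\nabla v$, $\var\Delta\rho$, $\var|u|^{8}u$ and $\var\nabla u\cdot\nabla\rho$, tends to $0$ in $\mathcal{D}'((0,T)\times\mathbb{T}^3)$ as $\var\to0$.

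I would then extract a subsequence and establish the compactness. For the particle phase, from $\sqrt{n_{\var}}$ bounded in $L^{\infty}(0,T;H^{1})$ and $\p_{t}n_{\var}=-\dive(n_{\var}v_{\var})+\var(\cdots)$ bounded in a negative Sobolev space (using that $n_{\var}v_{\var}=\sqrt{n_{\var}}\cdot\sqrt{n_{\var}}v_{\var}$ is bounded in $L^{\infty}(0,T;L^{3/2})$), the Aubin--Lions lemma gives $n_{\var}\to n$ in $C([0,T];L^{p})$ for $p<3$ and $\sqrt{n_{\var}}\to\sqrt{n}$ in $L^{2}(0,T;L^{q})$ for $q<6$, while $\nabla\sqrt{n_{\var}}\rightharpoonup\nabla\sqrt{n}$. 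Using the momentum equation $(\ref{twoapp})_{2}$ to control $\p_{t}(n_{\var}v_{\var})$, Aubin--Lions also yields $n_{\var}v_{\var}\to m$ strongly in $L^{p}((0,T)\times\mathbb{T}^3)$ for some $p>1$; pairing this with $\sqrt{n_{\var}}\to\sqrt{n}$ and $\sqrt{n_{\var}}v_{\var}\rightharpoonup\sqrt{n}v$ (weakly-$\ast$ in $L^{\infty}(0,T;L^{2})$) identifies $m=nv$, where $v$ is defined through $\sqrt{n}v$ and set to $0$ on $\{n=0\}$. For the fluid phase I would run the Lions--Feireisl machinery: $\rho_{\var}u_{\var}\to\rho u$ and $\sqrt{\rho_{\var}}u_{\var}\to\sqrt{\rho}u$ by compactness in time, $\rho_{\var}\to\rho$ in $C([0,T];L^{\gamma_{0}})$ weakly, and then the effective viscous flux identity --- obtained by applying $(-\Delta)^{-1}\dive$ to $(\ref{twoapp})_{4}$ as in Lemma \ref{lemma34}, with the drag term $\kappa(-\Delta)^{-1}\dive(n(u-v))$ being compact --- together with the renormalised continuity equation and the monotonicity of $s\mapsto As^{\gamma}+\delta s^{\gamma_{0}}$ give $\rho_{\var}\to\rho$ a.e.\ and strongly in $L^{p}((0,T)\times\mathbb{T}^3)$ for $p<\gamma_{0}+1$, so that $A\rho_{\var}^{\gamma}+\delta\rho_{\var}^{\gamma_{0}}\to A\rho^{\gamma}+\delta\rho^{\gamma_{0}}$ in $L^{1}$.

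The main obstacle is the strong $L^{2}$ convergence of $\sqrt{n_{\var}}v_{\var}$, which the convective term $\sqrt{n_{\var}}v_{\var}\otimes\sqrt{n_{\var}}v_{\var}$ in $(\ref{twoapp})_{2}$ requires. Here I would follow Mellet--Vasseur: the strong convergence $n_{\var}v_{\var}\to nv$ together with $n_{\var}\to n$ a.e.\ yields $v_{\var}\to v$ a.e.\ on $\{n>0\}$, hence $\sqrt{n_{\var}}v_{\var}\to\sqrt{n}v$ a.e.\ on $\mathbb{T}^3\times(0,T)$; the estimate \eqref{melletvar} furnishes, via the $L\log L$ control and the de la Vall\'ee Poussin criterion, the uniform integrability of $\{n_{\var}|v_{\var}|^{2}\}$, and Vitali's theorem then promotes the a.e.\ convergence to strong convergence of $\sqrt{n_{\var}}v_{\var}$ in $L^{2}((0,T)\times\mathbb{T}^3)$. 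With this, together with $u_{\var}\to u$ strongly in $L^{2}(0,T;L^{p})$ for $p<6$ (so that the drag term $\kappa n_{\var}(v_{\var}-u_{\var})=\kappa\sqrt{n_{\var}}(\sqrt{n_{\var}}v_{\var}-\sqrt{n_{\var}}u_{\var})$ converges), I would pass to the limit in the weak formulations \eqref{e1}, \eqref{e2}, \eqref{e4} and in the renormalised equation \eqref{e3}, obtaining a weak solution of \eqref{twodelta} with $n,\rho\ge0$.

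Finally, the conservation laws \eqref{massdelta}--\eqref{momentumdelta} follow by choosing the test function $\phi\equiv1$ in the limiting equations. For the energy inequality \eqref{energydelta} I would pass to the limit in \eqref{energyvar}: the functionals $\tfrac12 n|v|^{2}$, $n\log n-n+1$, $\tfrac12\rho|u|^{2}$, $\tfrac{A\rho^{\gamma}}{\gamma-1}$, $\tfrac{\delta\rho^{\gamma_{0}}}{\gamma_{0}-1}$ are convex, hence weakly lower semicontinuous, and the nonnegative $\var$-dissipations are simply discarded; moreover, since the limiting system carries no $\var$-regularisation, the awkward term $\int n\dive v$ is now cancelled exactly by $\tfrac{d}{dt}\int(n\log n-n+1)$ rather than by the $\Pi(n)$ device, so one also recovers $\int_{0}^{t}\int\eta n|\mathbb{D}(v)|^{2}\,dxd\tau\le E_{0,\delta}$ uniformly in $t$. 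The Mellet--Vasseur estimate \eqref{melletdelta} follows likewise by weak lower semicontinuity in \eqref{melletvar}. Lastly, the Bresch--Desjardins inequality \eqref{BDuniformintime} is obtained by passing to the limit in \eqref{BDii}: after $\var\to0$ its right-hand side reduces to $C\int(n(\dive v)^{2}+n|v-u|^{2})\,dx$ with $\int n(\dive v)^{2}\le3\int n|\mathbb{D}(v)|^{2}$, whose time integral is controlled by $E_{0,\delta}$ uniformly in $T$ by the previous point, so the $\var$-induced errors growing linearly in $T$ disappear and the estimate closes with a constant uniform in both $\delta$ and $T$ --- using also that $\nabla\sqrt{n_{0,\delta}}$ is bounded in $L^{2}$ uniformly in $\delta$ by the construction \eqref{dapp} and that $E_{0,\delta}\le C$ by \eqref{dapp22}.
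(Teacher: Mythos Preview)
Your strategy coincides with the paper's: extract weak limits from the $\var$-uniform estimates, use Aubin--Lions for the densities and momenta, the Mellet--Vasseur argument for strong $L^{2}$ convergence of $\sqrt{n_{\var}}v_{\var}$, the Lions--Feireisl effective-viscous-flux/renormalisation machinery for strong convergence of $\rho_{\var}$, and finally recover the energy and BD inequalities by lower semicontinuity combined with the exact identity $\int(n\log n-n+1)\,dx=2\int_{0}^{t}\!\int\sqrt{n}\,v\cdot\nabla\sqrt{n}\,dxd\tau$, which replaces the $\Pi(n)$ device once $\var=0$ and makes both estimates uniform in $T$.

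Two small technical slips are worth flagging. First, the assertion that $\sqrt{n_{\var}}v_{\var}\to\sqrt{n}\,v$ a.e.\ on all of $\mathbb{T}^{3}\times(0,T)$ does not follow on the vacuum set $\{n=0\}$ from $n_{\var}\to n$ and $n_{\var}v_{\var}\to nv$ a.e.\ alone: e.g.\ $n_{\var}=\var^{2}$, $v_{\var}=1/\var$ gives $n_{\var}\to0$ and $n_{\var}v_{\var}\to0$ but $\sqrt{n_{\var}}\,v_{\var}\equiv1$. The paper handles this via a cutoff $|v_{\var}|\le L$ plus Egorov on $\{n\ge\zeta\}$, and controls the tails by the Mellet--Vasseur bound; your Vitali route also works once you upgrade a.e.\ convergence to convergence in measure on $\{n=0\}$, using \eqref{melletvar} to bound $|\{|\sqrt{n_{\var}}v_{\var}|>\epsilon\}\cap\{|v_{\var}|>L\}|\le C/(\epsilon^{2}\log(1+L^{2}))$. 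Second, you neither have nor need strong convergence of $u_{\var}$ in $L^{2}(0,T;L^{p})$: the drag term $n_{\var}(v_{\var}-u_{\var})$ passes to the limit in $\mathcal{D}'$ directly from $n_{\var}\to n$ strongly in $C([0,T];L^{p})$ for $p<3$, the weak convergence $u_{\var}\rightharpoonup u$ in $L^{2}(0,T;L^{6})$, and the strong $L^{2}$ convergence of $\sqrt{n_{\var}}v_{\var}$.
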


	\vspace{2ex}

	\textbf{Proof of Proposition \ref{weakdelta}.} For $\var\in (0,\frac{1}{4})$, $\delta\in(0,1)$ and $\gamma_{0}>\max\{\gamma+4\}$, let $(n_{\var},v_{\var},\rho_{\var},u_{\var})$ be the global strong solution to the approximate problem (\ref{twoapp})-(\ref{dapp22}) given by Proposition \ref{appwell}. For any given time $T>0$, it follows from the a-priori estimates established in Lemmas \ref{lemma31}-\ref{lemma34} that there is a limit $(n,m,\rho,u)$ such that as $\varepsilon\rightarrow0$, we have
	\begin{equation}\label{limitvar1}
		\left\{
		\begin{aligned}
			&n_{\var}\overset{\ast}{\rightharpoonup} n\quad\quad~\text{in} \quad L^{\infty}(0,T; L^1(\mathbb{T}^3)),\\
			&n_{\var} v_{\var}\overset{\ast}{\rightharpoonup}  m\quad~\text{in}\quad L^{\infty}(0,T;L^1(\mathbb{T}^3)),\\
			&\rho_{\var}\rightharpoonup \rho\quad\quad~~ \text{in}\quad L^{\infty}(0,T;L^{\gamma_{0}}(\mathbb{T}^3)),\\
			&u_{\var} \rightharpoonup u\quad\quad~~\text{in}\quad L^2(0,T;H^1(\mathbb{T}^3)).
		\end{aligned}
		\right.
	\end{equation}
	One deduces by (\ref{energyvar}) and (\ref{BDvar}) that
	\begin{equation}\nonumber
		\begin{aligned}
			&\sup_{\var\in(0,\frac{1}{4})}\|\nabla n_{\var}\|_{L^{\infty}(0,T;L^{\frac{3}{2}})}\leq 2\sup_{\var\in(0,\frac{1}{4})} \big{(} \|\sqrt{n_{\var}}\|_{L^{\infty}(0,T;L^{6})}\|\nabla \sqrt{n_{\var}}\|_{L^{\infty}(0,T;L^2)} \big{)}<\infty,
		\end{aligned}
	\end{equation}
	and
	\begin{equation}\nonumber
		\begin{aligned}
			&\sup_{\var\in(0,\frac{1}{4})}\| (n_{\var})_{t}\|_{L^{2}(0,T;W^{-1,1})}\\
			&\quad\leq C\sup_{\var\in(0,\frac{1}{4})} \big{(} \|n_{\var}v_{\var}\|_{L^{2}(0,T;L^{1})}+\var\|\sqrt{n_{\var}}\Delta\sqrt{n_{\var}}\|_{L^{2}(0,T;L^1)}+\var\| \sqrt{n_{\var}}|\nabla\sqrt{n_{\var}}|^3\|_{L^{2}(0,T;L^1)}\\
			&\quad\quad+\var\|n_{\var}^{-12}\|_{L^2(0,T;L^2)}\big)\\
			&\quad\leq C\sup_{\var\in(0,\frac{1}{4})} \big{(} \|\sqrt{n_{\var}}\|_{L^{\infty}(0,T;L^{2})}\|\sqrt{n_{\var}}v_{\var}\|_{L^{\infty}(0,T;L^{2})}+\var\|\sqrt{n_{\var}}\|_{L^{\infty}(0,T;L^2)}\|\Delta\sqrt{n_{\var}}\|_{L^{2}(0,T;L^1)}\\
			&\quad\quad+\var\| \sqrt{n_{\var}}\|_{L^{\infty}(0,T;H^1)}\|\nabla \sqrt{n_{\var}}\|_{L^{6}(0,T;L^6)}+\var\|n_{\var}^{-25}\|_{L^1(0,T;L^1)}^{\frac{12}{25}}\big)\leq C(1+\var^{\frac{1}{25}})<\infty.
		\end{aligned}
	\end{equation}
	where in the last inequality we have used the fact
	\begin{equation}\nonumber
		\begin{aligned}
			&\var^{\frac{4}{3}}\int_{0}^{T}\|\nabla \sqrt{n_{\var}}\|_{L^{6}}^{6}dt\leq \int_{0}^{T}\|\nabla\sqrt{n_{\var}}\|_{L^2}^{\frac{2}{3}}\big{(}\var \|\nabla\sqrt{n_{\var}}\|_{L^4}^4\big)^{\frac{1}{3}}\big(\var\|\nabla\sqrt{n_{\var}}\|_{L^{12}}^4\big)dt\\
			&\quad\quad\quad\quad\quad\quad\quad\quad~~\leq C\var\int_{0}^{T}\| |\nabla \sqrt{n_{\var}}| |\nabla^2\sqrt{n_{\var}}|\|_{L^2}^2dt\leq C.
		\end{aligned}
	\end{equation}
	Thus, one employs the Aubin-Lions lemma and $W^{1,\frac{3}{2}}(\mathbb{T}^3)\hookrightarrow\hookrightarrow L^p(\mathbb{T}^3)$ $(1\leq p<3)$ to derive (up to a sequence) that
	\begin{equation}\label{limitvar2}
	\left\{
		\begin{aligned}
			&n_{\var}\rightarrow n\quad\quad\quad~~\text{in}\quad C([0,T];L^{p}(\mathbb{T}^3)),\quad 1\leq p<3,\\
			&\sqrt{n_{\var}}\overset{\ast}{\rightharpoonup} \sqrt{n}\quad\quad\text{in} \quad L^{\infty}(0,T; H^1(\mathbb{T}^3)),\\
			&\sqrt{n_{\var}}\to \sqrt{n}\quad\quad\text{in}\quad C([0,T];L^{p}(\mathbb{T}^3)),\quad 1\leq p<6.
		\end{aligned}
		\right.
	\end{equation}
	Due to (\ref{energyvar}) and (\ref{BDvar}), it is easy to show
	\begin{equation}\nonumber
		\begin{aligned}
			&\sup_{\var\in(0,\frac{1}{4})}\big( \|\nabla(n_{\var} v_{\var})\|_{L^2(0,T;L^1)}+\|(n_{\var} v_{\var})_{t}\|_{L^1(0,T;H^{-s_{0}})}\big)<\infty,
		\end{aligned}
	\end{equation}
	for some suitably large constant $s_{0}>0$. Therefore, we apply Aubin-Lions lemma and Lemma C.1 in \cite{lions1} to gain
	\begin{equation}\label{limitvar4}
		\left\{
		\begin{aligned}
			&n_{\var}v_{\var}\rightarrow n v\quad\quad\text{in}\quad L^2(0,T;L^{p}(\mathbb{T}^3)),\quad 1\leq p<\frac{3}{2},\\
			&n_{\var}v_{\var}\rightarrow nv\quad\quad\text{in}\quad C([0,T];L_{\text{weak}}^{\frac{3}{2}}(\mathbb{T}^3)),
		\end{aligned}
		\right.
	\end{equation}
		where $C([0,T]; X_{weak})$ denote the space of continuous functions on $[0,T]$ with values in $X$ equipped with the weak topology, and $v$ is given by
	\begin{equation}\nonumber
		v:=
		\begin{cases}
			\frac{m}{n},\quad
			& \mbox{if $n>0,$ } \\
			0. \quad
			& \mbox{if $n=0.$}
		\end{cases}
	\end{equation}
	Similarly, one gets as $\var\rightarrow0$ that
	\begin{equation}\label{limitvar11}
		\left\{
		\begin{aligned}
			&\rho_{\var}\rightarrow \rho\quad\quad~~\quad\quad\text{in}\quad C([0,T];H^{-1}(\mathbb{T}^3)),\\
			&\rho_{\var}\rightarrow \rho\quad\quad~~\quad\quad\text{in}\quad C([0,T];L^{\gamma}_{weak}(\mathbb{T}^3)),\\
			&\rho_{\var}u_{\var}\rightharpoonup \rho u\quad\quad\quad\text{in}\quad C([0,T];L^{\frac{2\gamma}{\gamma+1}}_{weak}(\mathbb{T}^3)),\\
			&\rho_{\var}u_{\var}\rightarrow \rho u\quad\quad\quad\text{in}\quad C([0,T];H^{-1}(\mathbb{T}^3)),
		\end{aligned}
		\right.
	\end{equation}
 By (\ref{limitvar3})-(\ref{limitvar11}), it also holds
	\begin{equation}\label{limitvar112}
		\left\{
		\begin{aligned}
			&\rho_{\var}u_{\var}\otimes u_{\var}\rightarrow \rho u\otimes u~~\quad\quad\quad\text{in}\quad \mathcal{D}'(\mathbb{T}^3\times(0,T)),\\
			&n_{\var}v_{\var}-n_{\var}u_{\var}\rightarrow n v-nu\quad\quad\text{in}\quad \mathcal{D}'(\mathbb{T}^3\times(0,T)).
		\end{aligned}
		\right.
	\end{equation}
	In addition, as $\var \rightarrow 0$, it is easy to verify that
	\begin{equation}\label{limitvar5}
		\left\{
		\begin{aligned}
			&\var \sqrt{n_{\var}}\Delta \sqrt{n_{\var}}+\var\sqrt{n}\dive (|\nabla \sqrt{n_{\var}}|^2\nabla \sqrt{n_{\var}}) +\var n_{\var}^{-12}\rightarrow 0\quad~\text{in}\quad\mathcal{D}'(\mathbb{T}^3\times(0,T)),\\
			&\sqrt{\varepsilon} \dive (n_{\var}\nabla v_{\var})+\varepsilon\sqrt{n_{\var}} \dive (|\nabla \sqrt{n_{\var}}|^2\nabla \sqrt{n_{\var}} v_{\var}) \rightarrow 0\quad\quad~\text{in}\quad\mathcal{D}'(\mathbb{T}^3\times(0,T)),\\
			&\varepsilon \sqrt{n_{\var}}\Delta \sqrt{n_{\var}} -\var n_{\var}|v_{\var}|^3v_{\var} \rightarrow 0\quad\quad\quad\quad\quad\quad\quad\quad\quad\quad~~~~\text{in}\quad\mathcal{D}'(\mathbb{T}^3\times(0,T)),\\
			&\var\Delta\rho_{\var}\rightarrow 0\quad\quad\quad\quad\quad\quad\quad\quad\quad\quad\quad\quad\quad\quad\quad\quad\quad\quad\quad~~\text{in}\quad\mathcal{D}'(\mathbb{T}^3\times(0,T)),\\
			&-\varepsilon\nabla u_{\var}\cdot\nabla \rho_{\var}-\var |u_{\var}|^8u_{\var}\rightarrow 0\quad\quad\quad\quad\quad\quad\quad\quad\quad\quad~~~~\text{in}\quad\mathcal{D}'(\mathbb{T}^3\times(0,T)).
		\end{aligned}
		\right.
	\end{equation}

	Next, we gain the strong convergence of $\sqrt{n_{\var}}v_{\var}$. For any constant $L>0$, write
	\begin{equation}\label{me1}
		\begin{aligned}
			&\int_{0}^{T}\int_{\mathbb{T}^3}|\sqrt{n_{\var}}v_{\var}-\sqrt{n}v|^2dxdt\\
			&\quad\leq \int_{0}^{T}\int_{\mathbb{T}^3}|\sqrt{n_{\var}}v_{\var}\mathbb{I}_{\{|v_{\var}|\leq L\}}-\sqrt{n}v\mathbb{I}_{\{ |v|\leq L\}}|^2dxdt\\
			&\quad\quad+2\int_{0}^{T}\int_{\mathbb{T}^3}|\sqrt{n_{\var}}v_{\var}\mathbb{I}_{ \{|v_{\var}|\geq L\}}|^2dxdt+2\int_{0}^{T}\int_{\mathbb{T}^3}|\sqrt{n}v\mathbb{I}_{ \{ |v|\geq L\}} |^2dxdt.
		\end{aligned}
	\end{equation}
	We need to show that the first term on the right-hand side of (\ref{me1}) tends to $0$ as $\var\rightarrow0$. Let $\zeta>0$, and $Q_{\zeta}:=\{(x,t)\in\mathbb{T}^3\times(0,T)~|~n(x,t)\geq \zeta>0\}$. According to (\ref{limitvar2}) and the Egorov theorem, for any $\zeta'>0$, there is a sufficiently small constant $\var_{1}\in(0,\var_{0})$ and a set $\tilde{Q}_{\zeta'}\subset Q_{\zeta}$  such that for any $\var\in(0,\var_{1})$, it holds
	\begin{equation} \label{y1}
		\begin{aligned}
			&|Q_{\zeta}/\tilde{Q}_{\zeta'}|\leq \zeta',\quad \quad n(x,t)\geq \frac{\zeta}{2}>0,\quad\quad (x,t)\in \tilde{Q}_{\zeta'}.
		\end{aligned}
	\end{equation}
	Thus, there follows by (\ref{limitvar2}), (\ref{limitvar4}) and (\ref{y1}) that
	\begin{equation}\nonumber
		\begin{aligned}
			&\sqrt{n_{\var}}v_{\var}\rightarrow \sqrt{n}v\quad\quad\text{a.e. in}\quad \tilde{Q}_{\zeta'},
		\end{aligned}
	\end{equation}
	from which and the dominated convergence theorem we infer
	\begin{equation}\label{y2}
		\begin{aligned}
			\lim_{\var\rightarrow0}\int_{\tilde{Q}_{\zeta'}}|\sqrt{n_{\var}}v_{\var}\mathbb{I}_{ \{ |v_{\var}|\leq L \}}-\sqrt{n}v\mathbb{I}_{ \{ |v|\leq L \}}|^2dxdt=0.
		\end{aligned}
	\end{equation}
	Combining (\ref{y1})-(\ref{y2}) together, we have for any fixed $L,\zeta,\zeta'>0$ that
	\begin{equation}\nonumber
		\begin{aligned}
			&\int_{0}^{T}\int_{\mathbb{T}^3}|\sqrt{n_{\var}}v_{\var}\mathbb{I}_{ \{|v_{\var}|\leq L\}}-\sqrt{n}v\mathbb{I}_{ \{ |v|\leq L \}}|^2dxdt\\
			&\quad\leq \int_{\tilde{Q}_{\zeta'}}|\sqrt{n_{\var}}v_{\var}\mathbb{I}_{ \{ |v_{\var}|\leq L \}}-\sqrt{n}v\mathbb{I}_{ \{ |v|\leq L \}}|^2dxdt+\int_{Q_{\zeta}/\tilde{Q}_{\zeta'}} |\sqrt{n_{\var}}v_{\var}\mathbb{I}_{ \{ |v_{\var}|\leq L \}}-\sqrt{n}v\mathbb{I}_{ \{ |v|\leq L \}}|^2dxdt\\
			&\quad\quad+\int_{\mathbb{T}^3\times(0,T)/Q_{\zeta}}|(\sqrt{n_{\var}}-\sqrt{n})v_{\var}\mathbb{I}_{ \{ |v_{\var}|\leq L \}}+\sqrt{n}(v\mathbb{I}_{ \{ |v|\leq L \}}-v_{\var}\mathbb{I}_{ \{ |v_{\var}|\leq L \}})|^2dxdt\\
			&\quad\leq \int_{\tilde{Q}_{\zeta'}}|\sqrt{n_{\var}}v_{\var}\mathbb{I}_{ \{ |v_{\var}|\leq L \}}-\sqrt{n}v\mathbb{I}_{ \{ |v|\leq L \}}|^2dxdt+CL \int_{0}^{T}\int_{\mathbb{T}^3} |\sqrt{n_{\var}}-\sqrt{n}|dxdt+CL(\sqrt{\zeta'}+\sqrt{\zeta}),
		\end{aligned}
	\end{equation}
	which together with (\ref{limitvar3}), (\ref{y2}) and the arbitrariness of $\zeta',\zeta$ leads to
	\begin{equation}\label{me2}
		\begin{aligned}
			&\lim_{\var\rightarrow0} \int_{0}^{T}\int_{\mathbb{T}^3}|\sqrt{n_{\var}}v_{\var}\mathbb{I}_{ \{|v_{\var}|\leq L\}}-\sqrt{n}v\mathbb{I}_{ \{ |v|\leq L \}}|^2dxdt=0,\quad\quad L>0.
		\end{aligned}
	\end{equation}
	We estimate the last term on the right-hand side of (\ref{me1}). For a.e. $(x,t)\in \{(x,t)\in\mathbb{T}^3\times(0,T)~|~n(x,t)>0\}$, there follows by (\ref{limitvar2}), (\ref{limitvar4}) and the fact $n_{\var}>0$ that
	\begin{equation}\label{mee1}
		\begin{aligned}
			&n |v|^2\log{(1+|v|^2)}=\frac{|m|^2\log{(n^2+|m|^2)}-2|m|^2\log{n}}{n}\\
			&\quad\quad\quad\quad\quad\quad\quad~~=\lim_{\var\rightarrow0}\frac{|n_{\var}v_{\var}|^2\log{(n_{\var}^2+|n_{\var}v_{\var}|^2)}-2|n_{\var}v_{\var}|^2\log{n_{\var}}}{n_{\var}}\\
			&\quad\quad\quad\quad\quad\quad\quad~~=\lim_{\var\rightarrow 0}n_{\var}|v_{\var}|^2\log{(1+|v_{\var}|^2)}.
		\end{aligned}
	\end{equation}
	It clearly holds on $\{(x,t)\in\mathbb{T}^3\times(0,T)~|~n(x,t)=0\}$ that
	\begin{equation}\label{mee2}
		\begin{aligned}
			&n |v|^2\log{(1+|v|^2)}=0\leq n_{\var} |v_{\var}|^2\log{(1+|v_{\var}|^2)}.
		\end{aligned}
	\end{equation}
	By (\ref{mee1})-(\ref{mee2}) and the Fatou lemma, we prove for a.e. $t\in(0,T)$ that
	\begin{equation}\label{melletdelta122}
		\begin{aligned}
			&\int_{\mathbb{T}^3} n(1+|v|^2)\log{(1+|v|^2)}dx\leq\liminf_{\var\rightarrow 0} \int_{\mathbb{T}^3} n_{\var}(1+|v_{\var}|^2)\log{(1+|v_{\var}|^2)}dx\leq C_{T}.
		\end{aligned}
	\end{equation}
	Due to (\ref{melletdelta122}), the last term on the right-hand side of (\ref{me1}) can be controlled by
	\begin{equation}\label{me3}
		\begin{aligned}
			\int_{0}^{T}\int_{\mathbb{T}^3}|\sqrt{n}v\mathbb{I}_{|v|\geq L}|^2dxdt\leq\frac{\int_{0}^{T}\int_{\mathbb{T}^3} n|v|^2\log{( 1+|v|^2)}dx}{\log{(1+L^2)}}\leq \frac{C_{T}}{\log{(1+L^2)}}.
		\end{aligned}
	\end{equation}
	Similarly, one gets
	\begin{equation}\label{me4}
		\begin{aligned}
			\int_{0}^{T}\int_{\mathbb{T}^3}|\sqrt{n_{\var}}v_{\var}\mathbb{I}_{|v_{\var}|\geq L}|^2dxdt\leq \frac{C_{T}}{\log{(1+L^2)}}.
		\end{aligned}
	\end{equation}
	Since the constant $L>0$ can be arbitrarily large, we substitute the estimates (\ref{me3})-(\ref{me4}) into (\ref{me1}) and then make use of  (\ref{me2}) to obtain
	\begin{equation}\label{4.23}
		\begin{aligned}
			&\sqrt{n_{\var}}v_{\var}\rightarrow \sqrt{n}v\quad\quad\quad\quad\quad\text{in}\quad L^2(0,T;L^2(\mathbb{T}^3)),
		\end{aligned}
	\end{equation}
	which leads to
	\begin{equation}\label{limitvar57}
		\begin{aligned}
			&n_{\var}v_{\var}\otimes v_{\var}\rightarrow \sqrt{n}v\otimes \sqrt{n}v\quad\text{in}\quad L^1(0,T;L^1(\mathbb{T}^3)).
		\end{aligned}
	\end{equation}
	
	Then, we aim to prove that the limit $(n,nv,\rho,\rho u)$ satisfies the conservation laws of mass and momentum. 
	One can take advantage of $(\ref{limitvar11})_{2}$, $(\ref{limitvar5})$ and the fact $1\in L^{\frac{\gamma}{\gamma-1}}(\mathbb{T}^3)$ to deduce 
	\begin{equation}\label{ma33}
		\begin{aligned}
			\lim_{\var\rightarrow0}\int_{\mathbb{T}^3}\rho_{\var}dx=\int_{\mathbb{T}^3}\rho_{0, \delta} dx,\quad\quad t\in(0,T).
		\end{aligned}
	\end{equation}
	 Similarly, one can derive
	\begin{align}
		&\lim_{\var\rightarrow0}\int_{\mathbb{T}^3}n_{\var}dx=\int_{\mathbb{T}^3}n_{0, \delta} dx,\quad\quad\quad\quad\quad\quad\quad\quad\quad\quad\quad\quad\quad~ t\in(0,T),  \label{4.25} \\
		&\int_{\mathbb{T}^3} nv dx=\int_{\mathbb{T}^3}n_{0,\delta}v_{0,\delta}dx+\int_{0}^{t}\int_{\mathbb{T}^3}\kappa (nu-nv)dxd\tau,\quad\quad t\in(0,T),\label{ee1}\\
		&\int_{\mathbb{T}^3} \rho u dx=\int_{\mathbb{T}^3}\rho_{0,\delta}u_{0,\delta}dx+\int_{0}^{T}\int_{\mathbb{T}^3}\kappa (nv-nu)dxdt,\quad\quad t\in(0,T).\label{ee2}
	\end{align}
	By (\ref{limitvar57})-(\ref{ee2}, (\ref{momentumdelta}) follows.

	Then, we claim as $\var\rightarrow0$ that
	\begin{equation}\label{strongrhoae}
		\begin{aligned}
			&\rho_{\var}\rightarrow \rho\quad\quad\text{a.e. in}\quad\mathbb{T}^3\times(0,T), 
		\end{aligned}
	\end{equation}
	which together with (\ref{gamma1}) and the Egorov theorem implies
	\begin{equation}\label{strongPvar}
		\begin{aligned}
			&A\rho_{\var}^{\gamma}+\delta \rho_{\var}^{\gamma_{0}}\rightarrow A\rho^{\gamma}+\delta \rho^{\gamma_{0}}\quad\quad\text{in}\quad L^1(0,T;L^1(\mathbb{T}^3)).
		\end{aligned}
	\end{equation}
	By virtue of (\ref{limitvar1})-(\ref{strongPvar}), one can conclude that the limit $(n,nv,\rho,\rho u)$ indeed satisfies the equations (\ref{twodelta}) in the sense of distributions. 

	\vspace{2ex}

	To prove (\ref{strongrhoae}), we need the following property of the effect viscous flux, which can be shown by repeating same arguments as used in \cite{feireisl1,lions2}.
	\begin{lemma}\label{lemma43}
		There holds
		\begin{equation}\label{effect1}
			\begin{aligned}
				&\lim_{\var\rightarrow 0}\int_{0}^{T}\int_{\mathbb{T}^{3}}\big{(}A\rho_{\var}^{\gamma}+\delta\rho_{\var}^{\gamma_{0}}-(2\mu+\lambda)\dive u_{\var}\big{)}\rho_{\var}dxdt\\
				&~~=\int_{0}^{T}\int_{\mathbb{T}^{3}}\big{(}A\overline{\rho^{\gamma}}+\delta\overline{\rho^{\gamma_{0}}}-(2\mu+\lambda)\dive u\big{)}\rho dxdt.
			\end{aligned}
		\end{equation}
		where $\overline{f}$ denotes the weak limit of $f_{\varepsilon}$ as $\varepsilon\rightarrow0$.
	\end{lemma}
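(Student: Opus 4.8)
\emph{Strategy.} The argument follows the classical proof of the weak continuity of the effective viscous flux due to Lions \cite{lions2} and Feireisl--Novotn\'{y}--Petzeltov\'{a} \cite{feireisl1}, the only new features being the drag force and the artificial regularizing terms. The starting point is the identity \eqref{rhoformular} derived in the proof of Lemma \ref{lemma34}: multiplying it by $\rho_{\var}$ and integrating over $\mathbb{T}^3\times(0,T)$ gives \eqref{rhoformular11}, which expresses $\int_0^T\!\!\int_{\mathbb{T}^3}(A\rho_{\var}^{\gamma}+\delta\rho_{\var}^{\gamma_0}-(2\mu+\lambda)\dive u_{\var})\rho_{\var}\,dxdt$ as a sum of a boundary-in-time term, a convective/commutator term, a drag term and three $\var$-terms. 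On the other hand, passing to the weak limit in the distributional formulation of $(\ref{twoapp})_4$ and using \eqref{limitvar112}, $u_{\var}\rightharpoonup u$ in $L^2(0,T;H^1(\mathbb{T}^3))$ and \eqref{limitvar5}, the limit $(n,nv,\rho,\rho u)$ satisfies $(\ref{twodelta})_4$ with $A\rho^{\gamma}+\delta\rho^{\gamma_0}$ replaced by its weak limit $A\overline{\rho^{\gamma}}+\delta\overline{\rho^{\gamma_0}}$; applying $(-\Delta)^{-1}\dive$ to this limiting equation and multiplying by $\rho$ yields the exact analogue of \eqref{rhoformular11} with $(\rho_{\var},u_{\var},n_{\var},v_{\var})$ replaced by $(\rho,u,n,v)$ and the $\var$-terms removed, the left-hand side being $\int_0^T\!\!\int_{\mathbb{T}^3}(A\overline{\rho^{\gamma}}+\delta\overline{\rho^{\gamma_0}}-(2\mu+\lambda)\dive u)\rho\,dxdt$. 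Subtracting the two identities, \eqref{effect1} reduces to the term-by-term convergence of the right-hand side of \eqref{rhoformular11} as $\var\to0$.

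\emph{The standard terms.} The boundary-in-time term $\int_{\mathbb{T}^3}\rho_{\var}(-\Delta)^{-1}\dive(\rho_{\var}u_{\var})\,dx\big|_{t=0}^{t=T}$ converges: at $t=0$ it equals the fixed number $\int_{\mathbb{T}^3}\rho_{0,\delta}(-\Delta)^{-1}\dive(\rho_{0,\delta}u_{0,\delta})\,dx$, while at $t=T$ one uses $\rho_{\var}u_{\var}\to\rho u$ in $C([0,T];H^{-1}(\mathbb{T}^3))$ (so that $(-\Delta)^{-1}\dive(\rho_{\var}u_{\var})(T)$ converges strongly in $L^2(\mathbb{T}^3)\hookrightarrow L^{\gamma_0'}(\mathbb{T}^3)$) together with $\rho_{\var}(T)\rightharpoonup\rho(T)$ weakly in $L^{\gamma_0}(\mathbb{T}^3)$, both from \eqref{limitvar11} and \eqref{energyvar}. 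The delicate term is the convective/commutator one,
\[
\int_0^T\!\!\int_{\mathbb{T}^3}\rho_{\var}\big[(-\Delta)^{-1}\dive\dive(\rho_{\var}u_{\var}\otimes u_{\var})-u_{\var}\cdot\nabla(-\Delta)^{-1}\dive(\rho_{\var}u_{\var})\big]\,dxdt .
\]
Here the bracket is a commutator of Riesz-type operators with multiplication by $\rho_{\var}$ applied to $\rho_{\var}u_{\var}$, and, by the div--curl lemma used exactly as in \cite{lions2,feireisl1} with the inputs $\rho_{\var}\to\rho$ and $\rho_{\var}u_{\var}\to\rho u$ in $C([0,T];H^{-1})$, $u_{\var}\rightharpoonup u$ in $L^2(0,T;H^1)$ and the uniform bound $\rho_{\var}\in L^{\gamma+1}(0,T;L^{\gamma+1}(\mathbb{T}^3))$ of \eqref{gamma1}, it converges to the same quantity with $(\rho_{\var},u_{\var})$ replaced by $(\rho,u)$. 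This compensated-compactness step is the main technical obstacle; since it is identical to the single-fluid case I would only reproduce it by reference to \cite{lions2,feireisl1}.

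\emph{The drag and artificial terms.} For the drag term, writing $n_{\var}(u_{\var}-v_{\var})=\sqrt{n_{\var}}\cdot\sqrt{n_{\var}}(u_{\var}-v_{\var})$ and using \eqref{4.23}, \eqref{limitvar2} and $u_{\var}\rightharpoonup u$, one gets $\sqrt{n_{\var}}(u_{\var}-v_{\var})\rightharpoonup\sqrt{n}(u-v)$ in $L^2(0,T;L^2(\mathbb{T}^3))$, hence $n_{\var}(u_{\var}-v_{\var})\rightharpoonup n(u-v)$ in $L^2(0,T;W^{-1,1}(\mathbb{T}^3))$; thus $(-\Delta)^{-1}\dive(n_{\var}(u_{\var}-v_{\var}))$ converges, and pairing this strongly convergent, $(-\Delta)^{-1}$-smoothed factor against the weakly convergent $\rho_{\var}$ (uniformly bounded in $L^{\infty}(0,T;L^{\gamma_0})$ by \eqref{energyvar}) gives $\kappa\int_0^T\!\!\int_{\mathbb{T}^3}\rho_{\var}(-\Delta)^{-1}\dive(n_{\var}(u_{\var}-v_{\var}))\,dxdt\to\kappa\int_0^T\!\!\int_{\mathbb{T}^3}\rho(-\Delta)^{-1}\dive(n(u-v))\,dxdt$, matching the limit identity. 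For the three $\var$-terms in \eqref{rhoformular11}, the $L^p(\mathbb{T}^3)$-continuity of $(-\Delta)^{-1}\partial_i\partial_j$ ($1<p<\infty$), the bounds $\var\|\nabla\rho_{\var}\|_{L^2(0,T;L^2)}^2\le C_T$ and $\var\|u_{\var}\|_{L^{10}(0,T;L^{10})}^{10}\le C_T$ of \eqref{energyvar}, and the uniform bound $\rho_{\var}\in L^{\infty}(0,T;L^{\gamma_0})$ with $\gamma_0>\gamma+4$ show that $\var\int_0^T\!\!\int_{\mathbb{T}^3}(\nabla\rho_{\var}\cdot\nabla(-\Delta)^{-1}\dive(\rho_{\var}u_{\var})+\rho_{\var}(-\Delta)^{-1}\dive\dive(\nabla\rho_{\var}\otimes u_{\var}))\,dxdt=O(\var^{1/2})$ and $\var\int_0^T\!\!\int_{\mathbb{T}^3}\rho_{\var}(-\Delta)^{-1}\dive(|u_{\var}|^{8}u_{\var})\,dxdt=O(\var^{1/10})$, so all of them vanish. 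Collecting the limits gives \eqref{effect1}; as expected, the only genuine difficulty is the div--curl step for the convective/commutator term, everything else being routine given the a-priori estimates of Section \ref{section3} and the convergences of Section \ref{section4}.
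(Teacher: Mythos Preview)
Your proposal is correct and follows exactly the approach the paper has in mind: the paper does not give its own proof of Lemma~\ref{lemma43} but simply states that it ``can be shown by repeating same arguments as used in \cite{feireisl1,lions2}'', and your sketch is precisely the Lions--Feireisl argument adapted to the present setting, handling the additional drag and artificial terms via the a-priori estimates of Section~\ref{section3} and the convergences \eqref{limitvar1}--\eqref{limitvar5}, \eqref{4.23}. One minor remark: for the drag term it is cleaner to dualize and use $\nabla(-\Delta)^{-1}\rho_{\var}\to\nabla(-\Delta)^{-1}\rho$ strongly in $C([0,T];L^p(\mathbb{T}^3))$ for all $p<\infty$ (from $\rho_{\var}\to\rho$ in $C([0,T];H^{-1})$ together with the uniform $L^\infty(0,T;L^{\gamma_0})$ bound, $\gamma_0>3$) paired against $n_{\var}(u_{\var}-v_{\var})\rightharpoonup n(u-v)$ weakly in $L^2(0,T;L^{6/5}(\mathbb{T}^3))$, rather than claiming strong convergence of $(-\Delta)^{-1}\dive(n_{\var}(u_{\var}-v_{\var}))$ directly.
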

	
	\vspace{2ex}

	\textbf{Proof of the strong convergence of of the density $\rho_{\var}$.}
	We are going to show (\ref{strongrhoae}). First, from $u \in L^2(0,T;H^1(\mathbb{T}^3))$, $\rho \in L^2(0,T;L^2(\mathbb{T}^3))$ and the arguements of renormalized solutions (cf. \cite{lions1,diperna1}) $(\rho, u)$ satisfies
	\begin{align}
		(\rho \log \rho )_{t}+\dive(\rho\log \rho)+\rho\dive u=0 \  \  \ \text{in} \  \mathcal{D}( \mathbb{T}^{3} \times (0, T)). \label{63}
	\end{align}
	One deduces from $(\ref{63})$ that
	\begin{align}
		\int_{0}^{t}\int_{\mathbb{T}^3} \rho \dive u dxd\tau=\int_{\mathbb{T}^3} \rho_{0}\log \rho_{0} dx-\int_{\mathbb{T}^3} \rho\log \rho dx,  \  \ t \in (0, T). \label{64}
	\end{align}
	Next, multiplying $(\ref{twoapp})_3$ by $1+\log \rho_{\var}$, we have
	\begin{align}
		&\int_{0}^{t}\int_{\mathbb{T}^3} \rho_{\var} \dive u_{\var} dxd\tau
		 \le  \int_{\mathbb{T}^3} \rho_{0, \delta}\log \rho_{0, \delta} dx-\int_{\mathbb{T}^3} \rho_{\var}\log \rho_{\var} dx, \  t \in (0, T). \label{66}
	\end{align}
    It follows by $(\ref{effect1})$ and $(\ref{64})$-$(\ref{66})$ that
    \begin{align}
    	&\overline{\lim_{\var \to 0}} \int_{\mathbb{T}^3} (\rho_{\var}\log \rho_{\var}-\rho\log \rho)dx  \notag \\
    	&\le \overline{\lim_{\var \to 0}} \int_{0}^{T}\int_{\mathbb{T}^3} (\rho\dive u-\rho_{\var}\dive u_{\var})dxd\tau  \notag \\
    	&=\overline{\lim_{\var \to 0}} \int_{0}^{T}\int_{\mathbb{T}^3} (\rho(A\rho^{\gamma}+\delta\rho^{\gamma_{0}})-\rho_{\var}(A\rho_{\var}^{\gamma}+\delta\rho_{\var}^{\gamma_{0}})dxd\tau  \le 0,   \notag
    \end{align}
    where we have used the monotonicity of $ A\rho^{\gamma}+\delta\rho^{\gamma_{0}}$. Since $\rho \to \rho\log\rho$ is strictly convex, we end up with $(\ref{strongrhoae})$.
	\textbf{Proof of the uniform-in-time basic energy inequality and Bresch-Desjardins estimate.} By $(\ref{twoapp})_{1}$, \eqref{limitvar2} and \eqref{4.23}, we show as $\var \to 0$   that
	\begin{equation}
		n_{t}+\dive (nv)=0. \label{two11}
	\end{equation}
Since it follows
\begin{equation}\label{4.100}
	\begin{aligned}
		&\int_{\mathbb{T}^3} (n_{\var}\log n_{\var}-n_{\var}+1)dx  \\
		&=\int_{0}^{t}\int_{\mathbb{T}^3} \log n_{\var} (\var \sqrt{n_{\var}}\Delta \sqrt{n_{\var}}+\var\sqrt{n_{\var}}\dive (|\nabla \sqrt{n_{\var}}|^2\nabla \sqrt{n_{\var}}) +\var n_{\var}^{-12})dxd\tau   \\
		&\quad+2\int_{0}^{t}\int_{\mathbb{T}^3} \sqrt{n_{\var}}\nabla \sqrt{n_{\var}}v_{\var} dxd\tau,\quad 0<t<T,
		\end{aligned}
\end{equation}
we have after taking the limit as $\var \to 0$ in \eqref{4.100} and employing $(\ref{limitvar2})$ and $(\ref{4.23})$ that
\begin{equation}
	\int_{\mathbb{T}^3} (n\log n-n+1)dx=2\int_{0}^{t}\int_{\mathbb{T}^3}  \sqrt{n}v\cdot\nabla \sqrt{n} dxd\tau,\quad 0<t<T. \label{4.28}
\end{equation}
	 Meanwhile, we make use of the lower semi-continuity of weak limits in $(\ref{energyloss})$ to get
	\begin{equation}\label{energyuniform}
	\begin{aligned}
		&\underset{t\in[0, T]}{{\rm{ess~sup}}}~\int_{\mathbb{T}^3}\big{(} \frac{1}{2}n|v|^{2} +\frac{1}{2}\rho |u|^2+\frac{A\rho^{\gamma}}{\gamma-1}+\frac{\delta \rho^{\gamma_{0}}}{\gamma_{0}-1}\big{)}dx\\
		&\quad\quad+\int_{0}^{t}\int_{\mathbb{T}^3}\big(\kappa n|v-u|^2+\mu |\nabla u|^2+(\mu+\lambda)(\dive u)^2\big)dxd\tau\\	
		&\quad\le \underset{t\in[0, T]}{{\rm{ess~sup}}}~\int_{\mathbb{T}^3}\big{(} \frac{1}{2}n_{\var}|v_{\var}|^{2} +\frac{1}{2}\rho_{\var} |u_{\var}|^2+\frac{A\rho_{\var}^{\gamma}}{\gamma-1}+\frac{\delta \rho_{\var}^{\gamma_{0}}}{\gamma_{0}-1}\big{)}dx\\
		&\quad\quad+\underset{\var \to 0}{{\rm{lim~inf}}}~\int_{0}^{t}\int_{\mathbb{T}^3}\big(\kappa n_{\var}|v_{\var}-u_{\var}|^2+\mu |\nabla u_{\var}|^2+(\mu+\lambda)(\dive u_{\var})^2\big)dxd\tau\\
		&\quad\quad+\underset{\var \to 0}{{\rm{lim~inf}}}~\var \int_{0}^{t}\int_{\mathbb{T}^3}  \big( n_{\var}|v_{\var}|^{5}+(1+|v_{\var}|^2)|\nabla \sqrt{n_{\var}}|^2(1+|\nabla \sqrt{n_{\var}}|^2)+ n_{\var}^{-25}\big)dxd\tau  \\
		&\quad\le E_{0, \delta}-2\int_{0}^{t}\int_{\mathbb{T}^3} \sqrt{n}v\cdot \nabla \sqrt{n} dxd\tau,
	\end{aligned}
\end{equation}
where $E_{0, \delta}$ is given by $(\ref{E0delta})$. Adding $(\ref{4.28})$ and $(\ref{energyuniform})$ together, we show  $(\ref{energydelta})$.	

    Similarly, we prove the uniform-in-time Bresch-Desjardins estimate. By $(\ref{3.4})$ and $(\ref{BDii})$, it holds as $\var \to 0$ that
\begin{equation}\label{BDiiuniform}
	\begin{aligned}
		&\underset{t\in[0, T]}{{\rm{ess~sup}}}~\int_{\mathbb{T}^3} (\frac{1}{2}n|v+c_{0}\nabla \log n|^2+\frac{1}{2}\rho |u|^2+\frac{A\rho^{\gamma}}{\gamma-1}+\frac{\delta \rho^{\gamma_{0}}}{\gamma_{0}-1})dx  \\
		&\quad+\int_{0}^{t}\int_{\mathbb{T}^3} \big(c_{0}|\nabla\sqrt{n}|^2+\kappa n|v-u|^2+\mu |\nabla u|^2+(\mu+\lambda)(\dive u)^2\big) dxd\tau\\
		&\quad \leq C+ C\underset{\var \to 0}{{\rm{lim~inf}}}~\int_{\mathbb{T}^3}( n_{\var}(\dive v_{\var})^2+n_{\var}|v_{\var}-u_{\var}|^2)dx-2\int_{0}^{t}\int_{\mathbb{T}^3}  \sqrt{n}v\cdot\nabla \sqrt{n}  dxd\tau\\
		&\quad\quad+\underset{\var \to 0}{{\rm{lim~inf}}}~\var\int_{\mathbb{T}^3}( n_{\var}|v_{\var}|^5+|\nabla \sqrt{n_{\var}}|^4+|\nabla \sqrt{n_{\var}}|^2 + |v_{\var}|^2|\nabla \sqrt{n_{\var}}|^2(1+|\nabla \sqrt{n_{\var}}|^2)+ n_{\var}^{-25})dx.
	\end{aligned}
\end{equation}  
The combination of $(\ref{factpp})$, $(\ref{4.28})$ and $(\ref{BDiiuniform})$ gives rise to $(\ref{BDuniformintime})$.

	\section{Vanishing artificial pressure}\label{section5}

	In this section, we are ready to show that the sequence of weak solutions to the IVP (\ref{twodelta}) converges to a expected weak solution to the original IVP (\ref{two}) as $\delta\rightarrow0$. Since the $L^{\infty}(0,T;L^{\gamma}(\mathbb{T}^3)$-bound of the density $\rho_{\delta}$ is not enough to get the convergence of the pressure, we need the higher integrability estimates, which can be proved similarly as in Lemma \ref{lemma34} and the arguments in \cite[Page 174]{lions2}. 
	\begin{lemma} \label{lemma51}
		Let $T>0$, and $(n,n v,\rho u,u)$ be a weak solution to the IVP $(\ref{twodelta})$ given by Proposition $\ref{weakdelta}$ for $t\in (0,T]$. Then, under the assumptions of Theorem \ref{theorem11},  we have
		\begin{equation}
			\int_{0}^{T}\int_{\mathbb{T}^3} (\rho^{\frac{5\gamma}{3}-1}+\delta \rho^{\gamma_{0}+\frac{2\gamma}{3}-1})dxdt \le  C,   \label{5.1}
		\end{equation}
		where $C>0$ is a constant independent of $\delta$.
	\end{lemma}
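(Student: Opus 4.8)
The plan is to repeat, at the level of the $\delta$-system \eqref{twodelta}, the argument used in the proof of Lemma \ref{lemma34}, but testing against a suitable power of $\rho$ and keeping every estimate uniform in $\delta$ by invoking only the $\delta$-independent bounds of Proposition \ref{weakdelta}. Set $\theta:=\frac{2\gamma}{3}-1$, which is positive precisely because $\gamma>\frac32$; since $\gamma+\theta=\frac{5\gamma}{3}-1$ and $\gamma_0+\theta=\gamma_0+\frac{2\gamma}{3}-1$, it suffices to bound $\int_0^T\int_{\mathbb{T}^3}(A\rho^{\gamma+\theta}+\delta\rho^{\gamma_0+\theta})\,dx\,dt$ by a constant independent of $\delta$. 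As a preliminary, $(\rho,u)$ is a renormalized solution of $(\ref{twodelta})_3$ (since $u\in L^2(0,T;H^1)$ and, for the $\delta$-problem, $\rho\in L^\infty(0,T;L^{\gamma_0})$ with $\gamma_0$ large, exactly as in \eqref{63}), hence $\partial_t\rho^\theta+\dive(\rho^\theta u)+(\theta-1)\rho^\theta\dive u=0$ in $\mathcal{D}'(\mathbb{T}^3\times(0,T))$.

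Following the computation of Lemma \ref{lemma34}, apply $(-\Delta)^{-1}\dive$ to $(\ref{twodelta})_4$ to obtain the effective viscous flux identity
\begin{equation}\nonumber
A\rho^{\gamma}+\delta\rho^{\gamma_{0}}-(2\mu+\lambda)\dive u=(-\Delta)^{-1}\dive(\rho u)_{t}+(-\Delta)^{-1}\dive\dive(\rho u\otimes u)+\kappa(-\Delta)^{-1}\dive\big(n(u-v)\big),
\end{equation}
multiply it by $\psi(t)\rho^\theta$ with a cut-off $\psi\in C_c^\infty((0,T))$ that is then let tend to $\mathbb{I}_{(0,T)}$, integrate over $\mathbb{T}^3\times(0,T)$, and move $(2\mu+\lambda)\int\psi\rho^\theta\dive u$ to the right-hand side. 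As in \eqref{rhoformular11}, integrate the $(\rho u)_t$-contribution by parts in time: this produces the boundary term $\int\rho^\theta(-\Delta)^{-1}\dive(\rho u)\big|_{0}^{T}$ and, after inserting the renormalized equation for $\partial_t\rho^\theta$, terms of the form $\int\rho^\theta u\cdot\nabla(-\Delta)^{-1}\dive(\rho u)$ and $(\theta-1)\int\rho^\theta\dive u\,(-\Delta)^{-1}\dive(\rho u)$, which are combined with the convective term into the classical commutator $\int\rho^\theta\big[(-\Delta)^{-1}\dive\dive(\rho u\otimes u)-u\cdot\nabla(-\Delta)^{-1}\dive(\rho u)\big]$, treated exactly as on \cite[Page 174]{lions2}. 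Admissibility of the test function is dealt with by the usual mollification/density argument.

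The remaining work is a term-by-term bound of the right-hand side using only the $\delta$-uniform information: $\rho\in L^\infty(0,T;L^{\gamma})$ and $\delta\rho^{\gamma_0}\in L^\infty(0,T;L^1)$ (from \eqref{energydelta} with $E_{0,\delta}\le C$), so $\rho^\theta\in L^\infty(0,T;L^{\gamma/\theta})$; $\sqrt\rho\,u\in L^\infty(0,T;L^2)$, so $\rho u\in L^\infty(0,T;L^{2\gamma/(\gamma+1)})\cap L^2(0,T;L^{6\gamma/(\gamma+6)})$ and $\rho|u|^2\in L^1(0,T;L^{3\gamma/(\gamma+3)})$; $u\in L^2(0,T;H^1)\hookrightarrow L^2(0,T;L^6)$; and, from \eqref{energydelta} and \eqref{BDuniformintime}, $\sqrt n\,(v-u)\in L^2(0,T;L^2)$ together with $\sqrt n\in L^\infty(0,T;H^1)\hookrightarrow L^\infty(0,T;L^6)$, whence $n(v-u)\in L^2(0,T;L^{3/2})$. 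Combining these with the $L^p$-boundedness $(1<p<\infty)$ of $(-\Delta)^{-1}\partial_i\partial_j$ and the one-derivative gain of $(-\Delta)^{-1}\partial_i$ ($W^{1,p}\hookrightarrow L^{3p/(3-p)}$), the only points that are not entirely routine are: (i) the viscous term $\int\psi\rho^\theta\dive u$ and $\mu\int\nabla u:\nabla(-\Delta)^{-1}\nabla\rho^\theta$ are \emph{absorbed} into the left-hand side via $\rho^\theta|\dive u|\le\varepsilon\rho^{2\theta}+C_\varepsilon|\dive u|^2$ and $\int\rho^{2\theta}\le\varepsilon'\int\rho^{\gamma+\theta}+C$ (legitimate since $2\theta<\gamma+\theta$ and $\nabla u\in L^2_{t,x}$); (ii) the commutator/convective term and the $\rho^\theta u$-term close at the critical Lebesgue exponent, namely $L^1(0,T;L^{3\gamma/(\gamma+3)})$ against $L^\infty(0,T;L^{\gamma/\theta})$ and $L^2(0,T;L^{6\gamma/(5\gamma-6)})$ against $L^2(0,T;L^{6\gamma/(\gamma+6)})$, with exponent sums exactly $1$; (iii) the drag term $\kappa\int\rho^\theta(-\Delta)^{-1}\dive(n(u-v))$ is bounded by pairing $n(v-u)\in L^2(0,T;L^{3/2})$, lifted to $L^2(0,T;L^3)$ by the operator, against $\rho^\theta\in L^\infty(0,T;L^{\gamma/\theta})$, which closes with room because $\theta<\frac{2\gamma}{3}$; (iv) the time-boundary term is controlled by the energy and by $\|\rho_{0,\delta}^\theta\|_{L^{\gamma/\theta}}+\|\rho_{0,\delta}u_{0,\delta}\|_{L^{2\gamma/(\gamma+1)}}\le C$. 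Collecting everything and absorbing yields \eqref{5.1}.

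I expect the only genuine obstacle to be organizational rather than conceptual: one must route the $(\rho u)_t$-term through the renormalized continuity equation for $\rho^\theta$ and through the commutator, and then verify that the borderline Hölder pairings in the transport terms are exactly critical for all $\gamma>\frac32$, so that the choice $\theta=\frac{2\gamma}{3}-1$ is forced and the bookkeeping of exponents must be carried out precisely. The only ingredient new relative to the single-phase theory of \cite{lions2} is the drag term, but the Bresch--Desjardins bound $\sqrt n\in L^\infty(0,T;H^1)$ makes it strictly sub-critical, so it causes no additional difficulty.
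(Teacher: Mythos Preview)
Your proposal is correct and follows exactly the route the paper indicates: the paper does not write out a proof of Lemma \ref{lemma51} at all, but simply points to Lemma \ref{lemma34} together with \cite[Page 174]{lions2}, which is precisely the Bogovski\u\i/effective-viscous-flux test against $\rho^{\theta}$ with $\theta=\frac{2\gamma}{3}-1$ that you carry out. Your treatment of the one new term, the drag force $\kappa(-\Delta)^{-1}\dive\big(n(u-v)\big)$, via $\sqrt{n}\in L^\infty_t H^1_x$ and $\sqrt{n}(v-u)\in L^2_{t,x}$, is the right observation and indeed renders it subcritical; the remaining exponent bookkeeping is the standard Lions computation.

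One cosmetic remark: in item (i) you list a term ``$\mu\int\nabla u:\nabla(-\Delta)^{-1}\nabla\rho^\theta$'', but after applying $(-\Delta)^{-1}\dive$ to $(\ref{twodelta})_4$ the full viscous contribution is already the scalar $(2\mu+\lambda)\dive u$, so only $\int\rho^{\theta}\dive u$ appears and your absorption argument for it suffices.
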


	\vspace{2ex}
	
	\textbf{Proof of Theorem \ref{theorem11} on the global existence of weak solutions.} Let $\delta\in(0,1)$, and $(n_{\delta},n_{\delta} v_{\delta},\rho_{\delta},\rho_{\delta} u_{\delta})$ be a global weak solution to the IVP (\ref{twoapp}) given by Proposition \ref{weakdelta}. For any $T>0$, with the help of the uniform estimates $(\ref{massdelta}$-$(\ref{BDuniformintime})$ and $(\ref{lemma51})$, there exists a limit $(n,m,\rho,\tilde{m})$ such that as $\delta\rightarrow 0$, it holds
	\begin{equation}\label{limitdelta1}
		\left\{
		\begin{aligned}
			&\delta\int_{0}^{T}\int_{\mathbb{T}^3}\rho_{\delta}^{\gamma_{0}}dxdt
			~\rightarrow 0,\\
			&\sqrt{n_{\delta}}\overset{\ast}{\rightharpoonup} \sqrt{n}\quad\quad\quad\text{in} \quad L^{\infty}(0,T; H^1(\mathbb{T}^3)),\\
			&~n_{\delta} v_{\delta}\overset{\ast}{\rightharpoonup}  m\quad\quad\quad~\text{in}\quad L^{\infty}(0,T;L^{\frac{3}{2}}(\mathbb{T}^3)),\\
			&~\rho_{\delta}\rightharpoonup \rho\quad\quad\quad\quad~~ \text{in}\quad L^{\frac{5\gamma}{3}-1}(0,T;L^{\frac{5\gamma}{3}-1}(\mathbb{T}^3)),\\
			&~u_{\delta} \rightharpoonup u\quad\quad\quad\quad~~\text{in}\quad L^2(0,T;H^1(\mathbb{T}^3)).
		\end{aligned}
		\right.
	\end{equation}
	By using the Aubin-Lions lemma as well as the uniform estimates (\ref{massdelta})-(\ref{melletdelta}), we can denote
	\begin{equation}\nonumber
		v:=
		\begin{cases}
			\frac{m}{n},\quad
			& \mbox{if $n>0,$ } \\
			0,\quad
			& \mbox{if $n=0,$}
		\end{cases}
	\end{equation}
	and have as $\delta\rightarrow0$ (up to a sequence) that
	\begin{equation}\label{limitdelta2}
		\left\{
		\begin{aligned}
			&\sqrt{n_{\delta}}\rightarrow \sqrt{n}\quad\quad\quad\quad~\text{in}\quad C([0,T];L^{p}(\mathbb{T}^3)),\quad p\in[1,6),\\
			&~n_{\delta}\rightarrow n\quad\quad\quad\quad\quad~~\text{in}\quad C([0,T];L^{p}(\mathbb{T}^3)),\quad p\in[1,3),\\
			&~n_{\delta}v_{\delta}\rightarrow nv ~\quad\quad\quad\quad\text{in}\quad L^2(0,T;L^{p}(\mathbb{T}^3)),\quad p\in [1,\frac{3}{2}),\\
			&\sqrt{n_{\delta}}v_{\delta}\rightarrow \sqrt{n}v\quad\quad\quad \text{in}\quad L^2(0,T;L^{2}(\mathbb{T}^3)),\\
			&~\rho_{\delta}\rightarrow \rho\quad\quad\quad\quad\quad\quad\text{in}\quad C([0,T];H^{-1}(\mathbb{T}^3)),\\
			&~\rho_{\delta}\rightarrow \rho\quad\quad\quad\quad\quad\quad\text{in}\quad C([0,T];L^{\gamma}_{weak}(\mathbb{T}^3)),\\
			&~\rho_{\delta} u_{\delta}\rightarrow \rho u\quad\quad\quad~\quad\text{in}\quad C([0,T];H^{-1}(\mathbb{T}^3)),\\
			&~\rho_{\delta} u_{\delta}\rightarrow \rho u\quad\quad\quad~\quad\text{in}\quad C([0,T];L^{\frac{2\gamma}{\gamma+1}}_{weak}(\mathbb{T}^3)),\\
		\end{aligned}
		\right.
	\end{equation}
	Since the proof of (\ref{limitdelta2}) is quite similar to the arguments in \cite{mellet1}, we omit the details for brevity.

	Then, we aim to prove
	\begin{equation}\label{strongrhoae1}
		\begin{aligned}
			&\rho_{\delta}\rightarrow \rho\quad\quad\text{a.e. in}\quad\mathbb{T}^3\times(0,T).
		\end{aligned}
	\end{equation}
	If (\ref{strongrhoae1}) holds, then it follows
	\begin{equation}\label{strongpressure1}
		\begin{aligned}
			&\rho_{\delta}^{\gamma}\rightarrow \rho^{\gamma} \quad\quad \text{in}\quad L^1(0,T;L^1(\mathbb{T}^3)).
		\end{aligned}
	\end{equation}
	Arguing similarly as the proof of Proposition \ref{weakdelta}, one can conclude from (\ref{dapp21})-(\ref{dapp22}), $(\ref{massdelta})$-$(\ref{melletdelta})$ and $(\ref{limitdelta1})$-(\ref{strongpressure1}) that the limit $(n,nv,\rho,\rho u)$ indeed is a global weak solution to the IVP (\ref{two})-(\ref{d}) in the sense of Definition \ref{defn11} satisfying $(\ref{BD})$.
	
	\vspace{2ex}
	
	To show the strong converge of $\rho_{\delta}$, we consider the following cut-off function (cf. \cite{feireisl1}):
	\begin{equation}\label{Tk}
		T_{k}(s):=
		\begin{cases}
			s,
			& \mbox{if $0\leq s \leq k,$ } \\
			\text{smooth and concave},
			& \mbox{if $k\leq s\leq 3k,$}\\
			2k,
			& \mbox{if $s\geq 3k$.}
		\end{cases}
	\end{equation}
	As in \cite{feireisl1,lions2}, we can prove the following property of effect viscous flux:
	\begin{lemma}\label{lemma52}
		For any $k>0$, we have
		\begin{equation}\label{effectvar}
			\begin{aligned}
				&\lim_{\delta\rightarrow 0}\int_{0}^{T}\int_{\mathbb{T}^{3}}\big{(}A\rho_{\delta}^{\gamma}+\delta \rho_{\delta}^{\gamma_{0}}-(2\mu+\lambda)\dive u_{\delta}\big{)}T_{k}(\rho_{\delta})dxdt\\
				&~~=\int_{0}^{T}\int_{\mathbb{T}^{3}}\big{(}A\overline{\rho^{\gamma}}+\delta \overline{\rho^{\gamma_{0}}}-(2\mu+\lambda)\dive u\big{)}\overline{T_{k}(\rho)}dxdt,
			\end{aligned}
		\end{equation}
		where $\overline{f}$ denotes the weak limit of $f_{\varepsilon}$ as $\delta\rightarrow0$.
	\end{lemma}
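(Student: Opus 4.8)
The plan is to follow the now-classical Feireisl--Lions strategy for identifying the effective viscous flux $F_\delta:=A\rho_\delta^\gamma+\delta\rho_\delta^{\gamma_0}-(2\mu+\lambda)\dive u_\delta$. First I would apply the operator $\mathcal{A}_i:=(-\Delta)^{-1}\partial_i$ to the momentum equation $(\ref{twodelta})_4$ for the $\delta$-problem, so that the quantity $F_\delta$ appears (modulo the Helmholtz projection) as $\partial_i\mathcal{A}_i$ applied to $\rho_\delta u_\delta$ and the nonlinear/drag terms. Testing the momentum equation against the vector field $\psi(t)\phi(x)\mathcal{A}[T_k(\rho_\delta)]$ (with $\psi\in\mathcal D(0,T)$, $\phi\in\mathcal D(\mathbb T^3)$, eventually a partition of unity reducing to the global test function $1$) produces an identity whose leading term on the left is $\int_0^T\int_{\mathbb T^3}\psi\phi\,F_\delta\,T_k(\rho_\delta)\,dxdt$. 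On the other side I would test the limit momentum equation $(\ref{two})_4$ against $\psi\phi\,\mathcal A[\overline{T_k(\rho)}]$ to get the analogous identity with $\overline{F}:=A\overline{\rho^\gamma}+\delta\overline{\rho^{\gamma_0}}-(2\mu+\lambda)\dive u$ and $\overline{T_k(\rho)}$.

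The core of the argument is then to show that the \emph{difference} of the remaining terms tends to zero as $\delta\to0$. These remaining terms are of two types: (i) purely temporal/transport terms built from $\rho_\delta u_\delta$, which I would handle using the continuity equation $(\ref{twodelta})_3$ together with the renormalization identity for $T_k(\rho_\delta)$, namely $\partial_t T_k(\rho_\delta)+\dive(T_k(\rho_\delta)u_\delta)+(T_k'(\rho_\delta)\rho_\delta-T_k(\rho_\delta))\dive u_\delta=0$, valid by DiPerna--Lions theory since $\rho_\delta\in L^2_{t,x}$ and $u_\delta\in L^2_tH^1_x$; and (ii) the "commutator" term
\[
\int_0^T\!\!\int_{\mathbb T^3}\psi\phi\,\Big(u_\delta^i\,T_k(\rho_\delta)\,\partial_i(-\Delta)^{-1}\dive(\rho_\delta u_\delta)-u_\delta^i\,\rho_\delta u_\delta^j\,\partial_i\mathcal A_j[T_k(\rho_\delta)]\Big)\,dxdt,
\]
which is precisely of the Coifman--Lions--Meyer--Semmes type: using the compactness $u_\delta\to u$ in (a suitable weak sense) and $\rho_\delta\to\rho$ in $C([0,T];L^\gamma_{\rm weak})$, together with the $L^p$-continuity of the Riesz-type operators $\mathcal A$, $\partial_i\mathcal A_j$, one shows this commutator converges to the same expression with $\rho_\delta u_\delta$ replaced by $\rho u$, $T_k(\rho_\delta)$ by $\overline{T_k(\rho)}$, and $u_\delta$ by $u$, and that the two limiting commutators cancel. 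The drag term $\kappa n_\delta(v_\delta-u_\delta)$ contributes $\int\psi\phi\,\kappa n_\delta(v_\delta-u_\delta)\cdot\mathcal A[T_k(\rho_\delta)]$, which converges by the strong convergence $\sqrt{n_\delta}v_\delta\to\sqrt n v$ in $L^2_{t,x}$, $\sqrt{n_\delta}u_\delta\to\sqrt n u$, $n_\delta\to n$ in $C_tL^p_x$ and the compact embedding giving strong convergence of $\mathcal A[T_k(\rho_\delta)]$; and the artificial terms $\delta\rho_\delta^{\gamma_0}$ and $\delta$-dependent contributions are controlled by the uniform bound $(\ref{5.1})$, giving $\delta\int\rho_\delta^{\gamma_0+\frac{2\gamma}3-1}\le C$ hence $\delta\rho_\delta^{\gamma_0}\to0$ in $L^1_{t,x}$ and the corresponding flux terms vanish.

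The main obstacle is the rigorous justification of the commutator-cancellation step (ii): one must verify that $\partial_i(-\Delta)^{-1}\dive(\rho_\delta u_\delta)$ converges strongly enough (in $L^2_t$ with values in a space compactly embedded where it is paired against the weakly convergent $u_\delta$) to pass to the limit in the product, which requires the Aubin--Lions-type time-compactness of $\rho_\delta u_\delta$ coming from the momentum equation $(\ref{twodelta})_4$ together with spatial regularization by $(-\Delta)^{-1}\dive$, exactly as in \cite{feireisl1,lions2}. Since every ingredient — the $L^p$ bounds on $\rho_\delta$ from $(\ref{5.1})$, the energy and Bresch--Desjardins bounds $(\ref{energydelta})$--$(\ref{BDuniformintime})$, the Mellet--Vasseur bound $(\ref{melletdelta})$, and the renormalized continuity equation — has already been established in Proposition \ref{weakdelta} and Lemma \ref{lemma51}, the proof is a direct adaptation of the arguments in \cite{feireisl1,lions2}, and I would state it as such, indicating only the points where the drag term and the degenerate phase require the strong $L^2_{t,x}$ convergence of $\sqrt{n_\delta}v_\delta$.
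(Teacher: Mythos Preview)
Your proposal is correct and follows exactly the approach the paper indicates: the paper does not give its own proof of Lemma \ref{lemma52} but simply states that it ``can be proved as in \cite{feireisl1,lions2}'', and your outline is precisely the Feireisl--Lions effective viscous flux argument from those references, with the only non-standard point --- the drag term $\kappa n_\delta(v_\delta-u_\delta)$ --- handled correctly via the strong convergences of $\sqrt{n_\delta}v_\delta$ and $n_\delta$ already established in Proposition \ref{weakdelta}.
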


	\begin{lemma}\label{lemma53}
		For any $k>0$, there exists a constant $C$ independent of $\delta$ and $k$ such that
		\begin{equation}\label{osc}
			\begin{aligned}
				&\limsup_{\delta\rightarrow0} \int_{0}^{T}\int_{\mathbb{T}^3}|T_{k}(\rho_{\delta})-T_{k}(\rho)|^{\gamma+1}dxdt\leq C.
			\end{aligned}
		\end{equation}
	\end{lemma}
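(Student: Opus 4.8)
The plan is to follow the now-classical scheme of Feireisl--Novotn\'y--Petzeltov\'a, adapted to the two-phase setting. The main idea is to use the effective viscous flux identity of Lemma \ref{lemma52} together with the monotonicity of the pressure to control the defect measure of the truncations $T_k(\rho_\delta)$. First I would recall that, since $(\rho_\delta, u_\delta)$ is a renormalized solution of $(\ref{twodelta})_3$ (indeed, the density equation is linear transport with $u_\delta \in L^2(0,T;H^1)$, so the DiPerna--Lions theory applies), the truncations satisfy
\begin{equation}\nonumber
\partial_t T_k(\rho_\delta) + \dive\big(T_k(\rho_\delta) u_\delta\big) + \big(T_k'(\rho_\delta)\rho_\delta - T_k(\rho_\delta)\big)\dive u_\delta = 0
\end{equation}
in the sense of distributions, and likewise the limiting quantities $\overline{T_k(\rho)}$, $\overline{(T_k'(\rho)\rho - T_k(\rho))\dive u}$ obey the passage-to-the-limit version. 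The quantity to be estimated is $\mathbf{osc}_{\gamma+1} := \limsup_{\delta\to 0}\|T_k(\rho_\delta) - T_k(\rho)\|_{L^{\gamma+1}(\mathbb{T}^3\times(0,T))}^{\gamma+1}$, where $T_k(\rho)$ denotes the weak-$\ast$ limit of $T_k(\rho_\delta)$.

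The key algebraic step is the elementary inequality, valid for the convex function $s\mapsto s\log s$ composed appropriately (or more directly for $z\mapsto z^\gamma$ since $\gamma>\frac32>1$),
\begin{equation}\nonumber
\limsup_{\delta\to 0}\int_0^T\!\!\int_{\mathbb{T}^3}|T_k(\rho_\delta)-T_k(\rho)|^{\gamma+1}dxdt \le \limsup_{\delta\to 0}\int_0^T\!\!\int_{\mathbb{T}^3}\big(\rho_\delta^\gamma - \rho^\gamma\big)\big(T_k(\rho_\delta) - T_k(\rho)\big)dxdt,
\end{equation}
which follows from the pointwise bound $|a-b|^{\gamma+1}\le C(a^\gamma-b^\gamma)(T_k(a)-T_k(b))$ after splitting into the regions where $T_k$ is linear and where it is truncated, plus a uniform $L^{\gamma+\theta}$ bound on $\rho_\delta$ coming from $(\ref{5.1})$ to absorb the error from the concave part. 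Then I would rewrite the right-hand side as
\begin{equation}\nonumber
\int_0^T\!\!\int_{\mathbb{T}^3}\big(\overline{\rho^\gamma\, T_k(\rho)} - \overline{\rho^\gamma}\,\overline{T_k(\rho)}\big)dxdt + \int_0^T\!\!\int_{\mathbb{T}^3}\big(\overline{\rho^\gamma} - \overline{\rho^\gamma}\big)\cdots
\end{equation}
--- more precisely, decompose $\rho_\delta^\gamma T_k(\rho_\delta) - \rho^\gamma T_k(\rho)$ and use Lemma \ref{lemma52} to replace $\int (A\rho_\delta^\gamma+\delta\rho_\delta^{\gamma_0})T_k(\rho_\delta)$ in the limit by $\int(A\overline{\rho^\gamma}+\delta\overline{\rho^{\gamma_0}})\overline{T_k(\rho)}$ modulo the viscous-flux terms $(2\mu+\lambda)\big(\overline{\dive u\, T_k(\rho)} - \dive u\,\overline{T_k(\rho)}\big)$. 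The latter term is controlled by the renormalized continuity equation for $T_k(\rho)$: subtracting the equation for $\overline{T_k(\rho)}$ from (the limit of) the equation for $T_k(\rho_\delta)$ and testing appropriately bounds $\|\overline{T_k(\rho)\dive u} - \overline{T_k(\rho)}\,\dive u\|_{L^1}$ in terms of the defect of $T_k$ itself, which in turn is bounded uniformly in $k$ because $T_k'(\rho_\delta)\rho_\delta - T_k(\rho_\delta)$ is uniformly bounded in $L^2(\mathbb{T}^3\times(0,T))$ (here one uses $|T_k'(s)s - T_k(s)|\le C\,T_k(s)$ and the $L^{\gamma+\theta}$ estimate on $\rho_\delta$).

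The step I expect to be the main obstacle is obtaining the $k$-uniform bound --- one must verify that all the error terms generated by the concave part of $T_k$ (i.e. on the set $\{\rho_\delta \ge k\}$) are absorbed using only the higher integrability $\rho_\delta\in L^{\frac{5\gamma}{3}-1}$ from Lemma \ref{lemma51} (note $\frac{5\gamma}{3}-1 > \gamma$ iff $\gamma > \frac32$, which is exactly the standing assumption), and that the drag-force contribution $\kappa n(v-u)$ appearing in $(\ref{twodelta})_4$, which enters the effective-flux computation of Lemma \ref{lemma52}, does not spoil the cancellation. For the drag term one uses $\kappa n_\delta(v_\delta - u_\delta) = \kappa\sqrt{n_\delta}\,\sqrt{n_\delta}(v_\delta-u_\delta)$ with $\sqrt{n_\delta}$ bounded in $L^\infty_t H^1_x \hookrightarrow L^\infty_t L^6_x$ and $\sqrt{n_\delta}(v_\delta-u_\delta)$ bounded in $L^2_{t,x}$ by the energy inequality $(\ref{energydelta})$, giving $\kappa n_\delta(v_\delta-u_\delta)$ bounded in $L^2(0,T;L^{3/2})$, which is strongly convergent enough (by the strong convergence of $\sqrt{n_\delta}$ and of $\sqrt{n_\delta}v_\delta$ in $(\ref{limitdelta2})$, plus weak convergence of $\sqrt{n_\delta}u_\delta$) to pass to the limit and contribute a continuous term that is handled exactly as in the compressible Navier--Stokes case. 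Assembling these pieces and taking $\limsup_{\delta\to 0}$ yields $(\ref{osc})$ with $C$ independent of both $\delta$ and $k$.
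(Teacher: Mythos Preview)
Your plan is correct and follows the same Feireisl--Novotn\'y--Petzeltov\'a scheme as the paper: the pointwise bound $|T_k(a)-T_k(b)|^{\gamma+1}\le (a^\gamma-b^\gamma)(T_k(a)-T_k(b))$, then Lemma~\ref{lemma52} to trade the pressure difference for the viscous-flux difference. The one place you diverge from the paper is in how you close the estimate. After reaching
\[
\limsup_{\delta\to 0}\int_0^T\!\!\int_{\mathbb{T}^3}|T_k(\rho_\delta)-T_k(\rho)|^{\gamma+1}
\;\le\;\frac{2\mu+\lambda}{A}\,\limsup_{\delta\to 0}\int_0^T\!\!\int_{\mathbb{T}^3}\bigl[\dive u_\delta\,T_k(\rho_\delta)-\dive u\,\overline{T_k(\rho)}\bigr],
\]
the paper does \emph{not} invoke the renormalized continuity equation at all; it simply applies Cauchy--Schwarz,
\[
\le C\,\sup_{\delta}\|\dive u_\delta\|_{L^2_{t,x}}\,\limsup_{\delta\to 0}\Bigl(\|T_k(\rho_\delta)-T_k(\rho)\|_{L^2_{t,x}}+\|\overline{T_k(\rho)}-T_k(\rho)\|_{L^2_{t,x}}\Bigr),
\]
and since $\gamma+1>2$ on the bounded domain $\mathbb{T}^3\times(0,T)$, both $L^2$ norms on the right are controlled by the $L^{\gamma+1}$ norm on the left; Young's inequality then absorbs them and yields $(\ref{osc})$ with a $k$-independent constant in one line. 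Your proposed detour through the renormalized equation for $T_k(\rho_\delta)$ and the bound on $T_k'(\rho_\delta)\rho_\delta-T_k(\rho_\delta)$ is the machinery the paper deploys \emph{after} this lemma, in the proof that $\overline{\rho\log\rho}=\rho\log\rho$, not here. Likewise, your concern about the drag term is unnecessary for Lemma~\ref{lemma53}: that term is already absorbed into the statement of Lemma~\ref{lemma52}, and once $(\ref{effectvar})$ is granted the proof of $(\ref{osc})$ sees nothing two-phase-specific.
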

	\begin{proof}
		From the facts that $\rho^{\gamma}$ is convex and $T_{k}(\rho)$ is  concave, we have by $(\ref{effectvar})$ that
		\begin{equation}\label{5.9}
			\begin{aligned}
			&\limsup_{\delta\rightarrow 0} \int_{0}^{T}\int_{\mathbb{T}^3}|T_{k}(\rho_{\delta})-T_{k}(\rho)|^{\gamma+1}dxdt   \notag \\
			&\quad \le \lim_{\delta \to 0} \int_{0}^{T}\int_{\mathbb{T}^{3}} (\rho_{\delta}^{\gamma}-\rho^{\gamma})(T_{k}(\rho_{\delta})-T_{k}(\rho))dxdt+\lim_{\delta \to 0} \int_{0}^{T}\int_{\mathbb{T}^{3}} (\overline{\rho^{\gamma}}-\rho^{\gamma})(T_{k}(\rho)-\overline{T_{k}(\rho)})dxdt   \\
			&\quad=\lim_{\delta \to 0} \int_{0}^{T}\int_{\mathbb{T}^{3}} (\rho_{\delta}^{\gamma}T_{k}(\rho_{\delta})-\overline{\rho^{\gamma}}\overline{T_{k}(\rho)})dxdt, \\
			&\quad=\frac{(2\mu+\lambda)}{A}\limsup_{\delta \to 0} \int_{0}^{T}\int_{\mathbb{T}^{3}} [\dive u_{\delta}T_{k}(\rho_{\delta})-\dive u\overline{T_{k}(\rho)}]dxdt      \\
			&\quad\le C \sup \|\dive u_{\delta}\|_{L^{2}(0,T;L^{2})}\lim_{\delta \to 0}(\|T_{k}(\rho_{\delta})-T_{k}(\rho)\|_{L^{2}(0,T;L^{2})}+\|T_{k}(\rho)-\overline{T_{k}(\rho)}\|_{L^{2}(0,T;L^{2})})  .
			\end{aligned}
		\end{equation}
		The above estimate as well as \eqref{energyvar} and \eqref{5.1} yields \eqref{osc}.
	\end{proof}
	
	\vspace{2ex}

	\textbf{Proof of strong convergence of the density $\rho_{\delta}.$} Introduce a series of functions
	\begin{align}
		L_{k}(z)=
		\begin{cases}
			z\log z, &0 \le z \le k,  \\
			z\log k+z\int_{k}^{z} \frac{T_{k}(s)}{s^2} ds, &z \ge k. 
		\end{cases}   \label{92}
	\end{align}
According to \cite{lions2}, both $L_{k}(\rho)$ and $L_{k}(\rho_{\delta})$ satifies the renormalized properties$:$
	\begin{align}
		& L_{k}(\rho)_{t}+\dive (L_{k}(\rho)u)+T_{k}(\rho)\dive u=0, \ \ \ \ \ \ \ \  \text{in}  \ \mathcal{D}^{'}((0,T) \times \mathbb{T}^3), \label{93}  \\
		&L_{k}(\rho_{\delta})_{t}+\dive (L_{k}(\rho_{\delta})u_{\delta})+T_{k}(\rho_{\delta})\dive u_{\delta}=0, \ \ \text{in} \ \mathcal{D}^{'}((0,T) \times \mathbb{T}^3). \label{94}
	\end{align}
	To prove $\overline{L_{k}(\rho)}=L_{k}(\rho)$, we deduces from  $(\ref{93})$-$(\ref{94})$ that
	\begin{align}
		\int_{\mathbb{T}^3} (\overline{L_{k}(\rho)}-L_{k}(\rho))dx=\lim_{\delta \to 0} \int_{0}^{T}\int_{\mathbb{T}^{3}} (T_{k}(\rho)\dive u-T_{k}(\rho_{\delta})\dive u_{\delta})dxdt,    \label{97}
	\end{align}
	By $(\ref{effectvar})$ and $(\ref{97})$, it holds as $k\rightarrow\infty$ that
	\begin{equation} \label{98}
		\begin{aligned}
		&\int_{\mathbb{T}^3} (\overline{L_{k}(\rho)}-L_{k}(\rho))dx   \\
		&=\int_{0}^{T}\int_{\mathbb{T}^{3}} T_{k}(\rho)\dive u+\frac{1}{2\mu+\lambda}\lim_{\delta\rightarrow 0}\int_{0}^{T}\int_{\mathbb{T}^{3}}(A\rho_{\delta}^{\gamma}-(2\mu+\lambda)\dive u_{\delta})T_{k}(\rho_{\delta})dxdt  \\
		&\quad-\frac{A}{2\mu+\lambda}\lim_{\delta\rightarrow 0}\int_{0}^{T}\int_{\mathbb{T}^{3}}\rho_{\delta}^{\gamma}T_{k}(\rho_{\delta})dxdt  \\
		&\le \int_{0}^{T}\int_{ \{\rho \ge k \} } [T_{k}(\rho)-\overline{T_{k}(\rho)}]\dive udxdt+\int_{0}^{T}\int_{ \{\rho \le k \}} [T_{k}(\rho)-\overline{T_{k}(\rho)}]\dive udxdt    \\
		&\le  C\|\dive u\|_{L^2(\{\rho \ge k\})}+Ck^{-(\gamma-1)}  \to 0.    
		\end{aligned}
	\end{equation}
	Furthermore, we have as $k\rightarrow\infty$ that
	\begin{equation}\label{5.18}
		\begin{aligned}
		&\|L_k(\rho)-\rho\log \rho\|_{L^{1}(0,T;L^{1}(\mathbb{T}^{3}))} \\
		&\le C \int_{0}^{T}\int_{\{ \rho \ge k \} } |\rho\log \rho|dxdt \le C k^{-\gamma+\frac{3}{2}} \int_{0}^{T}\int_{ \{\rho \ge k \} } \rho^{\gamma}dxdt \to 0, 
		\end{aligned}
	\end{equation}
	and
	\begin{equation}\label{100}
		\begin{aligned}
		&\|L_k(\rho_{\delta})-\rho_{\delta}\log \rho_{\delta}\|_{L^{1}(0,T;L^{1}(\mathbb{T}^{3}))}
		 \le C  k^{-\gamma+\frac{3}{2}}\int_{0}^{T}\int_{ \{\rho \ge k \} } \rho_{\delta}^{\gamma} dxdt  \to 0. 
		\end{aligned}
	\end{equation}
	With the aid of $(\ref{5.18})$ and the lower semi-continuity of weak limits, it further holds
	\begin{align}
		\|\overline{L_k(\rho)}-\overline{\rho\log \rho}\|_{L^{1}(0,T;L^{1}(\mathbb{T}^{3}))} \le \lim_{\delta \to 0} \inf \|L_k(\rho_{\delta})-\rho_{\delta}\log \rho_{\delta}\|_{L^{1}((0,T) \times \mathbb{T}^{3})} \to 0, \ \ \text{uniformly} \ \text{in} \ \delta.  \label{101}
	\end{align}
	By $(\ref{98})$-$(\ref{101})$, we conclude that
	\begin{align}
		\int_{\mathbb{T}^3} (\overline{\rho\log \rho}-\rho\log\rho)dx \le 0,  \ \  a.e. \ \ t \in (0, T) \label{5.29}
	\end{align}
This combined with $\overline{\rho\log \rho} \ge \rho \log \rho$ leads to $(\ref{strongrhoae1})$. The proof of Theorem \ref{theorem11} is completed.

	\section{Large time behavior}\label{section6}
	
	In this section, we are ready to study the large time behavior of global weak solutions to the IVP $(\ref{two})$-$(\ref{d})$.

	\begin{lemma}
		Let the assumptions $(\ref{a1})$ be satisfied, and $(n,nv,\rho,\rho u)$ be the global weak solution to the IVP $(\ref{two})$-$(\ref{d})$ given by Theorem \ref{theorem11}. Then there holds for a.e. $0\leq s<t<\infty$~{\rm(}including $s=0${\rm)} that
		\begin{equation}\label{menergy}
			\begin{aligned}
				\widetilde{E}(t)+\int_{s}^{t}\int_{\mathbb{T}^3}\big( \kappa n|v-u|^2+\mu|\nabla u|^2+(\mu+\lambda)(\dive u)^2 \big) dxd\tau\leq \widetilde{E}(s),
			\end{aligned}
		\end{equation}
		where the modified energy $\widetilde{E}(t)$ is defined by
		\begin{equation}\label{widee}
			\begin{aligned}
				&\widetilde{E}(t):=\int_{\mathbb{T}^3}\Big{(} \frac{1}{2}n \big{|} v-m_{1}(t)\big{|}^2+n\log{n}-n+1+\frac{1}{2}\rho\big{|} u-m_{2}(t)\big{|}^2+\frac{A\rho^{\gamma}}{\gamma-1}\Big{)}dx\\
				&\quad\quad\quad+\frac{1}{2}C_{n_{0},\rho_{0}}|(m_{1}-m_{2})(t)|^2.
			\end{aligned}
		\end{equation}
		and $C_{n_{0},\rho_{0}}$ and $m_{i}(t), i=1,2,$ are given by
		\begin{equation}\label{m1m2}
			\begin{aligned}
				&C_{n_{0},\rho_{0}}:=\frac{\int_{\mathbb{T}^3} n_{0}dx\int_{\mathbb{T}^3}\rho_{0}dx}{\int_{\mathbb{T}^3}(n_{0}+\rho_{0})dx},\quad\quad m_{1}(t):=\frac{\int_{\mathbb{T}^3} nv dx}{\int_{\mathbb{T}^3} ndx} ,\quad\quad m_{2}(t):=\frac{\int_{\mathbb{T}^3} \rho u dx}{\int_{\mathbb{T}^3} \rho dx}.
			\end{aligned}
		\end{equation}
	\end{lemma}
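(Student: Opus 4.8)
The strategy is to show that the modified energy $\widetilde E(t)$ differs from the basic energy in \eqref{energyin} by a constant independent of time, so that \eqref{menergy} becomes a direct consequence of the basic energy inequality together with the conservation laws \eqref{mass}.

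First I would exploit \eqref{mass}. Set $M_n:=\int_{\mathbb T^3}n_0\,dx$, $M_\rho:=\int_{\mathbb T^3}\rho_0\,dx$ and $P:=\int_{\mathbb T^3}(m_0+\tilde m_0)\,dx$; then $\int_{\mathbb T^3}n\,dx\equiv M_n$, $\int_{\mathbb T^3}\rho\,dx\equiv M_\rho$, so that $m_1(t)$ and $m_2(t)$ in \eqref{m1m2} are well defined for a.e.\ $t$ (using $M_n,M_\rho>0$), while conservation of total momentum gives the constraint $M_n m_1(t)+M_\rho m_2(t)\equiv P$. Using $\int_{\mathbb T^3}nv\,dx=M_n m_1(t)$ and $\int_{\mathbb T^3}\rho u\,dx=M_\rho m_2(t)$ to expand the quadratic terms in \eqref{widee}, one gets
\[
\widetilde E(t)=E(t)-\tfrac12 M_n|m_1(t)|^2-\tfrac12 M_\rho|m_2(t)|^2+\tfrac12 C_{n_0,\rho_0}|m_1(t)-m_2(t)|^2 ,
\]
where $E(t):=\int_{\mathbb T^3}\big(\tfrac12 n|v|^2+n\log n-n+1+\tfrac12\rho|u|^2+\tfrac{A\rho^\gamma}{\gamma-1}\big)\,dx$ is the basic energy appearing on the left of \eqref{energyin}.

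The key algebraic step is to show the correction terms are constant in $t$. Writing $c:=P/(M_n+M_\rho)$ and $d(t):=m_1(t)-m_2(t)$, the constraint forces $m_1=c+\tfrac{M_\rho}{M_n+M_\rho}d$ and $m_2=c-\tfrac{M_n}{M_n+M_\rho}d$, hence $M_n|m_1|^2+M_\rho|m_2|^2=(M_n+M_\rho)|c|^2+\tfrac{M_nM_\rho}{M_n+M_\rho}|d|^2$. Since $C_{n_0,\rho_0}=\tfrac{M_nM_\rho}{M_n+M_\rho}$ by \eqref{m1m2} and \eqref{mass}, the $|d|^2$ contributions cancel exactly and
\[
\widetilde E(t)=E(t)-\frac{|P|^2}{2(M_n+M_\rho)} ;
\]
this is precisely why the coefficient $C_{n_0,\rho_0}$ is chosen as in \eqref{m1m2}.

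Finally, it remains to invoke the basic energy inequality between two arbitrary times: $E(t)+\int_s^t\!\!\int_{\mathbb T^3}\big(\kappa n|v-u|^2+\mu|\nabla u|^2+(\mu+\lambda)(\dive u)^2\big)\,dxd\tau\le E(s)$ for a.e.\ $0\le s<t$, including $s=0$. At the approximate level of Sections \ref{section4}--\ref{section5} this is the energy balance, and passing to the limit via lower semicontinuity of the convex energy density and of the dissipation functional yields the distributional differential inequality $\tfrac{d}{dt}E+\kappa n|v-u|^2+\mu|\nabla u|^2+(\mu+\lambda)(\dive u)^2\le 0$ on $(0,\infty)$, whence the integrated form for a.e.\ $s<t$. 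Subtracting the constant $\tfrac{|P|^2}{2(M_n+M_\rho)}$ from both sides and inserting the identity for $\widetilde E$ gives \eqref{menergy}. The only genuinely delicate point is this last step — producing the energy inequality with a positive starting time $s$ rather than only from $t=0$; everything else is bookkeeping with the conservation laws and an elementary completion of squares.
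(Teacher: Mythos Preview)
Your proposal is correct and follows the same strategy as the paper: both show that $\widetilde E(t)-E(t)$ is a time-independent constant (equal to $-\tfrac{|P|^2}{2(M_n+M_\rho)}$), so that \eqref{menergy} reduces to the basic energy inequality, and then upgrade the inequality from $s=0$ to a.e.\ $s>0$ via a mollification/distributional argument. Your completion-of-squares via the parametrization $m_1=c+\tfrac{M_\rho}{M_n+M_\rho}d$, $m_2=c-\tfrac{M_n}{M_n+M_\rho}d$ is in fact cleaner than the paper's route, which instead inserts the integral representations $m_1(t)=\tfrac{1}{M_n}\big(\int m_0+\int_0^t\!\!\int\kappa(nu-nv)\big)$, $m_2(t)=\tfrac{1}{M_\rho}\big(\int\tilde m_0-\int_0^t\!\!\int\kappa(nu-nv)\big)$ and checks term-by-term that the cross terms in \eqref{relativeuv} and \eqref{m1m2t} cancel; both computations encode the same cancellation driven by the choice of $C_{n_0,\rho_0}$.
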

	\begin{proof}
		
		One deduces from $(\ref{mass})_{2}$, (\ref{dapp21}), (\ref{ee1})-(\ref{ee2}) and (\ref{limitdelta2}) that
		\begin{align}
			& m_{1}(t)=\frac{1}{\int_{\mathbb{T}^3} n  dx}(\int_{\mathbb{T}^3}m_{0}dx+\int_{0}^{t}\int_{\mathbb{T}^3}\kappa (nu-nv)dxd\tau),\quad\quad t>0,\label{eee1}\\
			&m_{2}(t)=\frac{1}{\int_{\mathbb{T}^3} \rho  dx}(\int_{\mathbb{T}^3} \tilde{m}_{0}dx+\int_{0}^{t}\int_{\mathbb{T}^3}\kappa (nv-nu)dxd\tau),\quad\quad t>0,\label{eee2}
		\end{align}
		which together with $(\ref{mass})_{1}$ yields
		\begin{equation}\label{relativeuv}
			\begin{aligned}
				&\int_{\mathbb{T}^3}\big(\frac{1}{2}n|v-m_{1}|^2+\frac{1}{2}\rho|u-m_{2}|^2\big)dx\\
				&\quad=\int_{\mathbb{T}^3}\big(\frac{1}{2}n|v|^2+\frac{1}{2}\rho|u|^2\big)dx-\frac{1}{2}\big{(} m_{1}(t) \big{)}^2\int_{\mathbb{T}^3} ndx-\frac{1}{2} \big{(} m_{2}(t) \big{)}^2\int_{\mathbb{T}^3}\rho dx\\
				&\quad=\int_{\mathbb{T}^3}\big(\frac{1}{2}n|v|^2+\frac{1}{2}\rho|u|^2\big)dx-\frac{1}{2}\big{(} \frac{(\int_{\mathbb{T}^3} m_{0}dx)^2} {\int_{\mathbb{T}^3}n_{0}dx}+\frac{(\int_{\mathbb{T}^3} \tilde{m}_{0}dx)^2}{\int_{\mathbb{T}^3}\rho_{0}dx}\big) \\
				&\quad\quad-(\frac{\int_{\mathbb{T}^3} m_{0}}{\int_{\mathbb{T}^3} n_{0}dx}+\frac{\int_{\mathbb{T}^3} \tilde{m}_{0}}{\int_{\mathbb{T}^3} \rho_{0}dx})\int_{0}^{t}\int_{\mathbb{T}^3}\kappa (nu-nv)dxd\tau\\
				&\quad\quad-\frac{\int_{\mathbb{T}^3}(n_{0}+\rho_{0})dx}{\int_{\mathbb{T}^3}n_{0}dx\int_{\mathbb{T}^3}\rho_{0}dx} \Big{(} \int_{0}^{t}\int_{\mathbb{T}^3}\kappa (nu-nv)dxd\tau\Big{)}^2.
			\end{aligned}
		\end{equation}
		And we make use of (\ref{mass}) and (\ref{eee1})-(\ref{eee2}) again to obtain
		\begin{equation}\label{m1m2t}
			\begin{aligned}
				&\frac{1}{2}C_{n_{0},\rho_{0}}|(m_{1}-m_{2})(t)|^2\\
				&\quad=\frac{\int_{\mathbb{T}^3} n_{0}dx\int_{\mathbb{T}^3}\rho_{0}dx}{\int_{\mathbb{T}^3}(n_{0}+\rho_{0})dx} \Big{|} \frac{\int_{\mathbb{T}^3} m_{0}dx}{\int_{\mathbb{T}^3} n_{0}dx}-\frac{\int_{\mathbb{T}^3} \tilde{m}_{0}dx}{\int_{\mathbb{T}^3} \rho_{0}dx}+\frac{\int_{\mathbb{T}^3}(n_{0}+\rho_{0})dx}{\int_{\mathbb{T}^3}n_{0}dx\int_{\mathbb{T}^3}\rho_{0}dx} \int_{0}^{t}\int_{\mathbb{T}^3}\kappa (nu-nv)dxd\tau \Big{|}^2\\
				&\quad=\frac{1}{2}C_{n_{0},\rho_{0}}|(m_{1}-m_{2})(0)|^2+(\frac{\int_{\mathbb{T}^3} m_{0}}{\int_{\mathbb{T}^3} n_{0}dx}+\frac{\int_{\mathbb{T}^3} \tilde{m}_{0}}{\int_{\mathbb{T}^3} \rho_{0}dx})\int_{0}^{t}\int_{\mathbb{T}^3}\kappa (nu-nv)dxd\tau\\
				&\quad\quad+\frac{\int_{\mathbb{T}^3}(n_{0}+\rho_{0})dx}{\int_{\mathbb{T}^3}n_{0}dx\int_{\mathbb{T}^3}\rho_{0}dx} \Big{(} \int_{0}^{t}\int_{\mathbb{T}^3}\kappa (nu-nv)dxd\tau\Big{)}^2.
			\end{aligned}
		\end{equation}
		By (\ref{energyin}) and (\ref{relativeuv})-(\ref{m1m2t}), (\ref{menergy}) holds for $s=0$.

		In addition, owing to (\ref{menergy}) for $s=0$, we deduce for any nonnegative text function $\psi=\psi(t)\in \mathcal{D}((0,T))$ that
		\begin{equation}\label{epp2}
			\begin{aligned}
				&-\int_{0}^{\infty}\psi_{t} \widetilde{E}(t)dt+\int_{0}^{\infty}\psi\int_{\mathbb{T}^3}\big( \kappa n|v-u|^2+\mu|\nabla u|^2+(\mu+\lambda)(\dive u)^2 \big) dxdt\leq 0,
			\end{aligned}
		\end{equation}
		Let $\psi_{\var_{*}}=\psi_{\var_{*}}(t)\in \mathcal{D}(0,\infty)$ for $\var_{*}\in(0,1)$ be the Friedrichs mollifier. Then letting $\psi(t)=\psi_{\var_{*}}(t-\cdot)$ in (\ref{epp2}), we deduce for any $0<s\leq \tau\leq t<\infty$ that
		\begin{equation}\label{epp3}
			\begin{aligned}
				&\frac{d}{dt} \widetilde{E}\ast \psi_{\var_{*}}(\tau)+\Big{(}\int_{\mathbb{T}^3}\big( \kappa n|v-u|^2+\mu|\nabla u|^2+(\mu+\lambda)(\dive u)^2 \big) dx\Big{)} \ast \psi_{\var_{*}}(\tau)\leq 0.
			\end{aligned}
		\end{equation}
		Integrating (\ref{epp3}) over $[s,t]$ and taking the limit as $\var_{*}\rightarrow 0$, we prove (\ref{menergy}) for a.e. $0<s<t<\infty$.

	\end{proof}

	\textbf{Proof of Theorem \ref{theorem12} on the large time behavior of weak solutions.} Let the assumptions of Theorem \ref{theorem11} hold, and $(n,nv,\rho,u)$ be a global weak solution to the IVP \eqref{two}-\eqref{d} given by Theorem \ref{theorem11}. For any $s\geq0$ and $t\in(0,1)$, denote the sequence
	\begin{equation}\nonumber
		\begin{aligned}
			&(n_{s},n_{s}v_{s},\rho_{s}, u_{s})(x,t):=(n,nv,\rho,u)(x,t+s).
		\end{aligned}
	\end{equation}
	By (\ref{energyin}), (\ref{mass}), $(\ref{BD})_{1}$ and (\ref{menergy}), we get the uniform bounds
	\begin{equation}\label{Ein1}
	\left\{
		\begin{aligned}
			&\sup_{s\geq0}\big(\|\sqrt{n_{s}}\|_{L^{\infty}(0,1;H^1)}+\|\nabla\sqrt{n_{s}}\|_{L^{2}(0,1;L^2)}+\|\nabla n_{s}\|_{L^2(0,1;L^{\frac{3}{2}})}\big)<\infty,\\
			&\sup_{s\geq0} \big(\|\sqrt{n_{s}}v_{s}\|_{L^{\infty}(0,1;L^2)}+\|\rho_{s}\|_{L^{\infty}(0,1;L^{\gamma})}+\|\sqrt{\rho_{s}}u_{s}\|_{L^{\infty}(0,1;L^2)}\big)<\infty,
		\end{aligned}
		\right.
	\end{equation}
	and the strong convergences
	\begin{equation}\label{Dstrongt1}
		\left\{
		\begin{aligned}
			&\lim_{s\rightarrow\infty}(\|\nabla n_{s}\|_{L^2(0,1;L^{\frac{3}{2}})}+\|n_{s}\|_{L^{\infty}(0,1;L^1)}+\|\sqrt{n_{s}}(v_{s}-u_{s})\|_{L^2(0,1;L^2)}+ \|\nabla u_{s}\|_{L^2(0,1;L^2)})=0,\\
			&\lim_{s\rightarrow\infty}\|\frac{\int_{\mathbb{T}^3} n_{s}v_{s} dx}{\int_{\mathbb{T}^3}n_{s}dx}-\frac{\int_{\mathbb{T}^3}\rho_{s} u_{s}dx}{\int_{\mathbb{T}^3}\rho_{s} dx}\|_{L^2(0,1)}=0.
		\end{aligned}
		\right.
	\end{equation}
	Owing to (\ref{Ein1}), $(\ref{mass})$, $(\ref{Dstrongt1})_{1}$ and the Sobolev inequality, it holds as $s\rightarrow\infty$ that
	\begin{equation}\label{Dstrongt}
		\left\{
		\begin{aligned}
			&\|n_{s}-n_{c}\|_{L^2(0,T;L^{3})}\rightarrow0,\quad\quad n_{c}:=\int_{\mathbb{T}^3}n_{0}dx,\\
			&\|u_{s}-\frac{\int_{\mathbb{T}^3} \rho_{s} u_{s}dx}{\int_{\mathbb{T}^3} \rho_{s} dx}\|_{L^2(0,1;L^{6})}\\
			&\quad\leq (1+\frac{\sup_{t\in(0,1)}\|\rho_{s}(t)\|_{L^{\frac{6}{5}}}}{\|\rho_{0}\|_{L^1}}) \|u_{s}-\int_{\mathbb{T}^3}u_{s}dx\|_{L^2(0,1;L^{6})}\rightarrow 0,\\
			&\|\sqrt{n_{s}}(v_{s}-\frac{\int \rho_{s} u_{s}dx}{\int \rho_{s} dx})\|_{L^2(0,1;L^2)}\\
			&\quad\leq  \|\sqrt{n_{s}}(v_{s}-u_{s})\|_{L^2(0,1;L^2)}+\sup_{t\in(0,1)}\|\sqrt{n_{s}}(t)\|_{L^3}\|u-\frac{\int_{\mathbb{T}^3} \rho_{s} u_{s}dx}{\int_{\mathbb{T}^3} \rho_{s} dx}\|_{L^2(0,1;L^{6})}\rightarrow0.
		\end{aligned}
		\right.
	\end{equation}
	Since $(n_{s})_{t}=-\dive (\sqrt{n_{s}}\sqrt{n_{s}} u_{s})$ is uniformly bounded in $L^{\infty}(0,1; W^{-1,\frac{3}{2}}(\mathbb{T}^3))$, by virtue of \eqref{Ein1}, $(\ref{Dstrongt})_{1}$ and the Aubin-Lions lemma, there is a subsequence of $n_{s}$ (still denoted by the same index) such that as $s\rightarrow\infty$, we have
	\begin{equation}\label{strongns}
		\begin{aligned}
			&n_{s}\rightarrow n_{c}\quad \quad\text{in}\quad C([0,T]; L^{q}(\mathbb{T}^3)),\quad q\in[1,3).
		\end{aligned}
	\end{equation}
	Then, by similar arguments as used in $(\ref{lemma51})$, we are able to show
	\begin{equation}\label{rho53}
		\begin{aligned}
			&\int_{0}^{1}\int_{\mathbb{T}^3}\rho_{s}^{\frac{5\gamma}{3}-1}dxdt\leq C,
		\end{aligned}
	\end{equation}
	for a constant $C>0$ independent of $s$, so there exist two functions $\overline{\rho}\in L^{\frac{5\gamma}{3}-1}(0,1;L^{\frac{5\gamma}{3}-1}(\mathbb{T}^3))$ and $\overline{\rho^{\gamma}} \in L^{\frac{5}{3}-\frac{1}{\gamma}}(0,1;L^{\frac{5}{3}-\frac{1}{\gamma}}(\mathbb{T}^3))$ such that up to a sequence if necessary, it holds as $s\rightarrow\infty$ that
	\begin{equation}\nonumber
		\left\{
		\begin{aligned}
			&\rho_{s}\rightharpoonup \overline{\rho}\quad\quad \quad\text{in}\quad L^{\frac{5\gamma}{3}-1}(0,1;L^{\frac{5\gamma}{3}-1}(\mathbb{T}^3)),\\
			&\rho_{s}^{\gamma}\rightharpoonup \overline{\rho^{\gamma}}\quad\quad~\text{in}\quad L^{\frac{5}{3}-\frac{1}{\gamma}}(0,1;L^{\frac{5}{3}-\frac{1}{\gamma}}(\mathbb{T}^3)).
		\end{aligned}
		\right.
	\end{equation}
	Set
	$$
	G(P)=P^{\alpha},\quad~~  P\in \mathbb{R}_{+},\quad\quad 0<\alpha<\frac{5}{3}-\frac{1}{\gamma},
	$$
	so that by (\ref{rho53}), as $s\rightarrow\infty$, there exists two limits $\overline{G(\rho)}\in L^{p_{1}}(0,1;L^{p_{1}}(\mathbb{T}^3))$ and $\overline{G(\rho^{\gamma})\rho^{\gamma}}\in L^{p_{2}}(0,1;L^{p_{2}}(\mathbb{T}^3))$ with $\frac{1}{p_{1}}+\frac{1}{p_{2}}<1$ satisfying
	\begin{equation}\nonumber
		\left\{
		\begin{aligned}
			&G(\rho_{s}^{\gamma})\rightharpoonup\overline{G(\rho)}\quad\quad\quad\quad\text{in}\quad L^{p_{1}}(0,1;L^{p_{1}}(\mathbb{T}^3)),\\
			&G(\rho_{s}^{\gamma})\rho_{s}^{\gamma}\rightharpoonup\overline{G(\rho)\rho^{\gamma}}\quad\quad\text{in}\quad L^{p_{2}}(0,1;L^{p_{2}}(\mathbb{T}^3)).
		\end{aligned}
		\right.
	\end{equation}
	
	As in \cite{feireisl4}, we apply the dive-curl lemma to get the strong convergence of $\rho_{s}$. Consider the two vector functions
	$$
	\big( G(\rho_{s}^{\gamma}), 0, 0,0\big),\quad\quad \big( \rho_{s}^{\gamma}, 0,0,0).
	$$
	By the arguments of renormalized solutions of (\ref{e3}), it holds
	\begin{equation}\nonumber
		\begin{aligned}
			&G(\rho_{s}^{\gamma})_{t}=-\dive( G(\rho_{s}^{\gamma})u_{s})-(\gamma\alpha-1)G(\rho_{s}^{\gamma})\dive u_{s}\quad\quad\text{in}\quad\mathcal{D}'(\mathbb{T}^3\times(0,1)),
		\end{aligned}
	\end{equation}
	which implies that 
	\begin{equation}\nonumber
		\begin{aligned}
			\dive_{t,x}\big( G(\rho_{s}^{\gamma}), 0, 0,0\big)=G(\rho_{s}^{\gamma})_{t}
		\end{aligned}
	\end{equation}
	is strongly compact in $W^{-1,q_{1}}(\mathbb{T}^3\times (0,1)$ for some $q_{1}>1$. It is easy to check that
	\begin{equation}\nonumber
		\begin{aligned}
			{\rm{curl}}_{t,x}\big( \rho_{s}^{\gamma}, 0,0,0\big) =
			{\tiny\left(
					\begin{matrix}
						0& -\partial_{x_{1}}\rho_{s}^{\gamma}  &-\partial_{x_{2}}\rho_{s}^{\gamma} & -\partial_{x_{3}}\rho_{s}^{\gamma}\\
						\partial_{x_{1}}\rho_{s}^{\gamma} &0  &0 & 0\\
						\partial_{x_{2}}\rho_{s}^{\gamma} &0  &0 &0\\
						\partial_{x_{3}}\rho_{s}^{\gamma} &0 &0 &0\\
					\end{matrix}
					\right)}
		\end{aligned}
	\end{equation}
	is strong compact in $W^{-1,q_{2}}(\mathbb{T}^3\times (0,1)$ for some $q_{2}>1$. Therefore, employing the $L^{p}$ type dive-curl lemma (cf. \cite{zou1}), we have
	$$
	\overline{G(\rho^{\gamma})\rho^{\gamma}}=\overline{G(\rho^{\gamma})}\overline{\rho^{\gamma}}.
	$$
	Since $G(P)$ is strictly monotone with respect to the variable $P$, we deduce
	\begin{equation}\label{Glimit}
		\begin{aligned}
			\overline{G(\rho^{\gamma})}=G(\overline{\rho^{\gamma}}),
		\end{aligned}
	\end{equation}
	By (\ref{Glimit}) and the convexity of $L^{\frac{1}{\alpha}}$, it holds
	\begin{equation}\nonumber
		\begin{aligned}
			\rho_{s}^{\gamma}\rightarrow \overline{\rho^{\gamma}}\quad \quad\text{in}\quad L^1(0,1;L^1(\mathbb{T}^3)),
		\end{aligned}
	\end{equation}
	which together (\ref{mass}) and (\ref{rho53}) gives rise to
	\begin{equation}\label{strongrhos}
		\begin{aligned}
			&\rho_{s}\rightarrow \rho_{c}:=\int_{\mathbb{T}^3}\rho_{0}dx\quad \quad\text{in}\quad L^{p}(0,1;L^{p}(\mathbb{T}^3)),\quad\quad 1\leq p<\frac{5}{3}\gamma-1.
		\end{aligned}
	\end{equation}
	Since $(\rho_{s})_{t}$ is uniform bounded in $L^{\infty}(0,1;L^{\frac{2\gamma}{\gamma+1}}(\mathbb{T}^3))$, we further have
	\begin{equation}\label{strongrhos1}
		\begin{aligned}
			&\rho_{s}\rightarrow \rho_{c}\quad \quad\text{in}\quad C([0,1];L^{\gamma}_{weak}(\mathbb{T}^3)).
		\end{aligned}
	\end{equation}

	Finally,  due to (\ref{menergy}), the energy $\widetilde{E}(t)$ defined by (\ref{widee}) is non-increasing and bounded from below, and therefore $\widetilde{E}(t)$ converge to a constant $\widetilde{E}_{\infty}\geq0$ as $t\rightarrow\infty$:
	$$
	\widetilde{E}_{\infty}:=\limsup_{t\rightarrow\infty}\widetilde{E}(t).
	$$
	By (\ref{strongrhos})-(\ref{strongrhos1}) and the Fatou lemma, there is a sequence $s_{k}\rightarrow \infty$ such that
	\begin{equation}\nonumber
		\begin{aligned}
			&\widetilde{E}_{\infty}=\lim_{s_{k}\rightarrow\infty}\int_{s_{k}}^{s_{k}+1}\widetilde{E}(\tau)d\tau\\
			&\quad~~=\lim_{s_{k}\rightarrow\infty}\int_{0}^{1}\int_{\mathbb{T}^3}\Big(\frac{1}{2}n_{s}\big|v_{s}-\frac{\int_{\mathbb{T}^3} n_{s}v_{s}dx}{\int_{\mathbb{T}^3}n_{s}dx} \big|^2+n_{s}\log{n_{s}}-n_{s}+1\\
			&\quad\quad~~+\frac{1}{2}\rho_{s}\big |u_{s}-\frac{\int_{\mathbb{T}^3}\rho_{s}u_{s}dx}{\int_{\mathbb{T}^3}\rho_{s}dx} \big|^2+\frac{A\rho_{s}^{\gamma}}{\gamma-1}\Big)dxd\tau\\
			&\quad\quad ~~+\frac{\int_{\mathbb{T}^3} n_{0}dx\int_{\mathbb{T}^3}\rho_{0}dx}{2\int_{\mathbb{T}^3}(n_{0}+\rho_{0})dx}\lim_{s_{k}\rightarrow\infty}\int_{0}^{1}\big{|}\frac{\int_{\mathbb{T}^3} n_{s}v_{s} dx}{\int_{\mathbb{T}^3} n_{s}dx}-\frac{\int_{\mathbb{T}^3} \rho_{s} u_{s} dx}{\int_{\mathbb{T}^3} \rho_{s} dx}\big{|}^2d\tau\\
			&\quad~~=\int_{\mathbb{T}^3}\big( n_{c}\log{n_{c}}-n_{c}+1+\frac{A\rho_{c}^{\gamma}}{\gamma-1} \big) dx\\
			&\quad~~\leq \liminf_{t\rightarrow \infty}\int_{\mathbb{T}^3}\big(n\log{n}-n+1+\frac{A\rho^{\gamma}}{\gamma-1} \big)dx\leq \limsup_{t\rightarrow\infty}\widetilde{E}(t)=\widetilde{E}_{\infty},
		\end{aligned}
	\end{equation}
so we have 
\begin{equation}
	\lim_{t\rightarrow \infty}\int_{\mathbb{T}^3}\big(n\log{n}-n+1+\frac{A\rho^{\gamma}}{\gamma-1} \big)dx=\int_{\mathbb{T}^3}\big( n_{c}\log{n_{c}}-n_{c}+1+\frac{A\rho_{c}^{\gamma}}{\gamma-1} \big) dx=\widetilde{E}_{\infty}. \label{wehave}
\end{equation}
	which together with (\ref{strongns}) gives rise to
	\begin{equation}\label{strongrhof}
		\begin{aligned}
			&\lim_{t\rightarrow\infty}\int_{\mathbb{T}^3}|\rho-\rho_{c}|^{\gamma}dx=0.
		\end{aligned}
	\end{equation}
	With the aid of  (\ref{wehave})-(\ref{strongrhof}), it further holds
	\begin{equation}\label{stronguv}
		\left\{
		\begin{aligned}
			&\lim_{t\rightarrow\infty}\int_{\mathbb{T}^3}(n|v-\frac{\int_{\mathbb{T}^3} nv dx}{\int_{\mathbb{T}^3}n dx}|^2+\rho|u-\frac{\int_{\mathbb{T}^3} \rho udx}{\int_{\mathbb{T}^3} \rho dx}|^2)dx=0,\\
			&\lim_{t\rightarrow\infty}\big{|}\frac{\int_{\mathbb{T}^3} nv dx}{\int_{\mathbb{T}^3} ndx}-\frac{\int_{\mathbb{T}^3} \rho u dx}{\int_{\mathbb{T}^3} \rho dx}\big{|}=0.
		\end{aligned}
		\right.
	\end{equation}
	Making use of the conservation laws (\ref{mass}), we substitute the equality
	\begin{equation}\nonumber
		\begin{aligned}
			&\frac{\int_{\mathbb{T}^3} \rho udx}{\int_{\mathbb{T}^3} \rho dx}=\frac{1}{\int_{\mathbb{T}^3}\rho_{0}dx}( \int_{\mathbb{T}^3}(m_{0}+\tilde{m_{0}})dx-\int_{\mathbb{T}^3}n_{0}dx \frac{\int_{\mathbb{T}^3} nv dx}{\int_{\mathbb{T}^3}n dx})
		\end{aligned}
	\end{equation}
	into $(\ref{stronguv})_{2}$ to derive
	$$
	\lim_{t\rightarrow\infty}\big(\|\frac{\int_{\mathbb{T}^3} nv dx}{\int_{\mathbb{T}^3}ndx}-u_{c}\|_{L^2(0,1)}+\|\frac{\int_{\mathbb{T}^3}\rho u dx}{\int_{\mathbb{T}^3}\rho dx}-u_{c}\|_{L^2(0,1)} \big)=0,\quad u_{c}:=\frac{\int_{\mathbb{T}^3}(m_{0}+\tilde{m}_{0})dx}{\int_{\mathbb{T}^3} (n_{0}+\rho_{0})dx}.
	$$
	This together with $(\ref{stronguv})$ yields
	\begin{equation}\label{Dstrongt12}
		\begin{aligned}
			&\lim_{t\rightarrow\infty}\int_{\mathbb{T}^3}(n|v-u_{c}|^2+\rho|u-u_{c}|^2)dx=0.
		\end{aligned}
	\end{equation}
	The combination of (\ref{strongns}), (\ref{strongrhof}) and (\ref{Dstrongt12}) gives rise to (\ref{decay1}). The proof of Theorem \ref{theorem12} is completed.
	\vspace{2ex}

	\noindent
	\textbf{Acknowledgments.} 
	The authors would like to thank Professor Hai-Liang Li for his helpful discussions and comments.
	The research of the paper is supported by National Natural Science Foundation of China (No.11931010, 11871047 and 11671384) and by the key research project of Academy for Multidisciplinary Studies, Capital Normal University, and by the Capacity Building for Sci-Tech Innovation-Fundamental Scientific Research Funds (No.007/20530290068).


\begin{thebibliography}{99}
		
		\parskip=0pt
		\small
		\bibitem {ac1} E. Acerbi, G. Mingione, Gradient estimates for a class of parabolic systems, \emph{Duke
			Math. J.} 136 (2) (2007) 285–320.
		\bibitem {brennen1} C. E. Brennen, \emph{Fundamentals of Multiphase Flow,} Cambridge Univ. Press, 2005.
		\bibitem {bresch10} D. Bresch, B. Desjardins, Existence of global weak solutions for a 2D viscous shallow water equations
		and convergence to the quasi-geostrophic model, \emph{Commun. Math. Phys.} 238 (2003) 211-223.
		\bibitem {bresch20} D. Bresch, B. Desjardins, On the construction of approximate solutions for the 2D viscous
		shallow water model and for compressible Navier-Stokes models, \emph{J. Math. Pures Appl.} (9)
		86 (2006) 362-368.
		\bibitem {bresch30} D. Bresch, B. Desjardins, C.-K. Lin, On some compressible fluid models: Korteweg, lubrication, and
		shallow water systems, \emph{Commun. Partial Differential Equations} 28 (2003) 843-868.
		\bibitem {bresch4} D. Bresch, B. Desjardins, J.-M. Ghidaglia, E. Grenier, Global weak solutions to a generic two-fluid model, \emph{Arch. Ration. Mech. Anal} 196 (2010) 599-629.
		\bibitem {bresch2} D. Bresch, B. Desjardins, J.M. Ghidaglia, E. Grenier, M. Hilliairet, Multifluid models including compressible fluids. \emph{Handbook of Mathematical Analysis in Mechanics of Viscous Fluids, Eds. Y. Giga et A. Novotn$\rm{\acute{y}}$} 52 (2018) 2927-2978.
		\bibitem {bresch5} D. Bresch, X. Huang, J. Li, Global weak solutions to one-dimensional non-conservative viscous compressible two-phase system, \emph{Comm. Math. Phys.} 309 (2012) 737–755.
		\bibitem {bresch1} D. Bresch, P.-E Jabin, Global existence of weak solutions for compressible Navier-Stokes equations: thermodynamically unstable pressure and anisotropic viscous stress tensor, \emph{Ann. of Math. (2)} 188 (2018) 577-684.
		\bibitem {bresch40} D. Bresch, A. Vasseur, C. Yu, Global existence of entropy-weak solutions to the compressible Navier-Stokes equations with non-linear density dependent viscosities, \emph{Journal of the European Mathematical Society}, 2021.
		\bibitem {bo1} V. B\"ogelein, Global Calder\'on-Zygmund theory for nonlinear parabolic systems,
		\emph{Calc. Var. Partial Differential Equations} 51 (2014) 555-596.
		model, \emph{Commun. Partial. Differ. Equ.} 31 (2006) 1349-1379.
		\bibitem {choi1} Y.-P. Choi, Global classical solutions and large-time behavior of the two-phase fluid model, \emph{SIAM J. Math. Anal.} 48 (5) (2016) 3090-3122.
		\bibitem {choi3} Y.-P Choi, J. Jung, Asymptotic analysis for a Vlasov-Fokker-Planck/Navier-Stokes system in a bounded domain, {\emph{Math. Models Methods Appl. Sci.}} (2021) 1-83.
		\bibitem {diperna1} R.J. DiPerna, P.-L. Lions, Ordinary differential equations, transport theory and Sobolev spaces, \emph{Invent. Math.} 98 (1989) 511-547.
		\bibitem {desv1} L. Desvillettes. Some aspects of the modeling at different scales of multiphase flows, \emph{Comput. Methods Appl. Mech. Engrg.} 199 (2010) 1265-1267.
		\bibitem {feireisl1} E. Feireisl, Compressible Navier-Stokes equations with a non-monotone pressure law, \emph{J. Differential Equations} 184 (2002) 97-108.
		\bibitem {feireisl2} E. Feireisl, \emph{Dynamics of viscous compressible fluids}, Oxford Lecture Series in Mathematics and its Applications, 26. Oxford University Press, Oxford, 2004.
		\bibitem {feireisl3} E. Feireisl, A. Novotn$\rm{\acute{y}}$, H. Petzeltov$\rm{\acute{a}}$. On the existence of globally defined weak solutions to the Navier-Stokes equations, \emph{J. Math. Fluid Mech.} 3 (2001) 358-392.
		\bibitem {feireisl4} Feireisl, E., Petzeltová, H. Large-time behaviour of solutions to the Navier-Stokes
		equations of compressible flow, \emph{Arch. Ration. Mech. Anal.} 150 (1999) 77-96.
		\bibitem {ge1} J. F. Gerbeau, B. Perthame, Derivation of viscous Saint-Venant system for laminar shallow water,
		\emph{Numerical validation. Discrete Contin. Dyn. Syst. Ser. B1} (2001) 89-102.
		\bibitem {gidaspow1} D. Gidaspow, Multiphase flow and fluidization, Boston, MA: Academic Press Inc, 1994.
		\bibitem {jabin1} P.-E. Jabin, B. Perthame, Notes on mathematical problems on the dynamics of dispersed
		particles interacting through a fluid, \emph{Modeling in applied sciences, Model. Simul. Sci. Eng.
			Technol., Birkh\''ouser} (2000) 111–147.
		\bibitem {hu1} X. Hu, Hausdorff dimension of concentration for isentropic compressible Navier-Stokes equations, \emph{Arch. Ration. Mech. Anal.} 234 (2019) 375-416.
		\bibitem {ishii2} M. Ishii, T. Hibiki. \emph{Thermo-fluid dynamics of two-phase flow}, With a foreword by Lefteri H. Tsoukalas, Springer, New York, 2006.
		\bibitem {jiang1} S. Jiang, P. Zhang, On spherically symmetric solutions of the compressible isentropic Navier-Stokes equations, \emph{Comm. Math. Phys.} 215 (2001) 559-581.
		\bibitem {jung1} J. Jung, Global-in-time dynamics of the two-phase fluid model in a bounded domain, arXiv:2012.14612.
		\bibitem {la1} O. A. Ladyzenskaja, V. A. Solonnikov, N. N. Uraltseva, \emph{Linear and quasilinear
			equations of parabolic type,} Translated from the Russian by S. Smith. Translations of Mathematical Monographs, Vol. 23. American Mathematical Society,
		Providence, R.I. 1968.
		\bibitem {l1} J. Li, Z. Xin, Global existence of weak solutions to the barotropic compressible Navier-Stokes flows with degenerate viscosities, arXiv:1504.06826.
		\bibitem {liu1} T.-P. Liu, Z. Xin, T. Yang, Vacuum states of compressible flow, \emph{Dis. Cont. Dyn. Syst.} 4 (1998) 1-32.
		\bibitem {lhl0} H.-L. Li, L.-Y. Shou, Well-posedness and optimal time-decay rates of the two-phase flow in critical spaces, Prepint.
		\bibitem {lhl1} H.-L. Li, T. Wang, Y. Wang, Wave phenomena to the three-dimensional fluid-particle model,  \emph{Arch. Ration. Mech. Anal.} 243 (2022) 1019-1089.
		\bibitem {lhl2} H.-L. Li, S. Zhao, Existence and nonlinear stability of stationary solutions to the full two-phase flow model in a half line, Appl. Math. Lett. 116 (2021) 841-890.
		\bibitem {lhl3} H.-L. Li, S. Zhao, H, Zuo, Existence and nonlinear stability of steady-states to outflow problem for the full two-phase flow, Arxiv: 2101.09443.
		\bibitem {lions1} P.-L. Lions, \emph{Mathematical topics in fluid mechanics. Vol. 1. Incompressible models.} Oxford Lecture Series in Mathematics and its Applications, 3. Oxford Science Publications. The Clarendon Press, Oxford University Press, New York, 1996.
		\bibitem {lions2} P.-L. Lions, \emph{Mathematical topics in fluid mechanics. Vol. 2. Compressible models.} Oxford Lecture Series in Mathematics and its Applications, 10. Oxford Science Publications. The Clarendon Press, Oxford University Press, New York, 1998.
		\bibitem {mat1} A. Matsumura, T. Nishida, The initial value problem for the equations of motion of viscous and heat-conductive gases. \emph{J. Math. Kyoto Univ.} 20 (1) (1980) 67-104.
		\bibitem {mellet1} A. Mellet, A. Vasseur, On the barotropic compressible Navier-Stokes equation, \emph{Commun. Partial Differential Equations} 32 (2007) 431-452.
Navier-Stokes system of equations, \emph{Comm. Math. Phys.,} \textbf{281} (2008), 573-596.
		\bibitem {mar1} F. Marche, Derivation of a new two-dimensional viscous shallow water model with varying
		topography, bottom friction and capillary effects, \emph{Eur. J. Mech. B Fluids} 26 (2007) 49-63.
		\bibitem {novotny2} A. Novotn$\rm{\acute{y}}$, M. Pokorn$\rm{\acute{y}}$, Weak solutions for some compressible multicomponent fluid models. \emph{Arch. Ration. Mech. Anal.} 235 (2020) 355-403.
		\bibitem {novotny1} A. Novotn$\rm{\acute{y}}$, I. Stra\u{s}kraba, \emph{Introduction to the mathematical theory of compressible flow}, Oxford Lecture Series in Mathematics and its Applications, 27. Oxford University Press, Oxford, 2004.
		\bibitem {o1} P. J. O’Rourke, \emph{Collective Drop Effects on Vaporizing Liquid Sprays}, PhD. Thesis, Pinceton University, 1981.
		\bibitem {pe1} J. Pedlosky, \emph{Geophysical Fluid Dynamics. 2nd edition}, Berlin-Heidelberg-New York: Springer 1992.
		\bibitem {plotnikov1} P. I. Plotnikov, W. Weigant, Isothermal Navier-Stokes equations and Radon transform, \emph{SIAM J. Math. Anal.} 47 (2015) 626-653.
		\bibitem {vasseur1} A. Vasseur, H. Wen, C. Yu, Global weak solution to the viscous two-fluid model with finite energy. \emph{J Math Pures Appl.} 125 (9) (2019) 247-282.
		\bibitem {vasseur10} A. Vasseur, C. Yu, Existence of global weak solutions for 3D degenerate compressible Navier-Stokes equations, \emph{Invent. Math.} 206 (2016) 1-40.
		\bibitem {wen0} H. Wen, L. Yao, C. Zhu, Review on mathematical analysis of some two-phase flow models, \emph{Acta Mathematica Scientia} 38 (2018) 1617-1636.
\bibitem {will2} F.-A. Williams, Spray combustion and atomization, Phys. of Fluids. 1 (6) (1958) 541-545.
		\bibitem {will1} F.-A. Williams, \emph{Combustion Theory}, Benjamin Cummings, 1985.
		\bibitem {wu1} G. Wu, Y. Zhang, L. Zou, Optimal large-time behavior of the two-phase fluid model in the whole space, \emph{SIAM Journal on Mathematical Analysis} 52 (6) (2020) 5748-5774.
		\bibitem {wuz1} Z. Wu, J. Yin, C. Wang, C, \emph{Elliptic and Parabolic Equations,} World Scientific Publishing
		Co. Pvt. Ltd., Hackensack, NJ, 2006
		\bibitem {zuber1} N. Zuber, On the dispersed two-phase flow in the laminar flow regime, \emph{Chem. Engrg. Sci}, 19 (1964) 897-917.
		\bibitem {zou1} Yi, Zhou, An $L^{p}$ theorem for compensated compactness, \emph{Proc. R. Soc. Edinb. A} 122 (1992)
		177-189.
	\end{thebibliography}
\end{document}